\theoremstyle{plain}
\newtheorem{theorem}{Theorem}[section]
\newtheorem{mainthm}{Theorem}
\newtheorem{corollary}[theorem]{Corollary}
\newtheorem{lemma}[theorem]{Lemma}
\newtheorem{remark}[theorem]{Remark}
 \newcommand{\sg}{\ensuremath{\sigma}}
 \newcommand{\ep}{\ensuremath{\varepsilon}}
 \newcommand{\om}{\omega}
\newcommand{\zt}{\ensuremath{\zeta}}
\DeclareMathOperator{\lin}{lin}
\DeclareMathOperator{\Exp}{Exp}
\newcommand{\sF}{\ensuremath{\mathscr{F}}}
\newcommand{\EE}{\ensuremath{\mathbb E}}
\newcommand{\NN}{\ensuremath{\mathbb N}}
\newcommand{\PP}{\ensuremath{\mathbb P}}
\newcommand{\RR}{\ensuremath{\mathbb R}}
\newcommand{\ZZ}{\ensuremath{\mathbb Z}} 
\numberwithin{equation}{section}
\title[Universal Gap Growth]{Universal Gap Growth for Lyapunov Exponents of Perturbed Matrix Products}
\date{\today}
\author[Atnip]{Jason Atnip}
\address{School of Mathematics and Statistics, University of New South Wales, Sydney, NSW 2052, Australia}
\email{\href{j.atnip@unsw.edu.au}{j.atnip@unsw.edu.au} }
\author[Froyland]{Gary Froyland}
\address{School of Mathematics and Statistics, University of New South Wales, Sydney, NSW 2052, Australia}
\email{\href{g.froyland@unsw.edu.au}{g.froyland@unsw.edu.au} }
\author[Gonz\'alez-Tokman]{Cecilia Gonz\'alez-Tokman}
\address{School of Mathematics and Physics, The University of Queensland, Brisbane, QLD 4072, Australia}
\email{\href{cecilia.gt@uq.edu.au}{cecilia.gt@uq.edu.au} }
\author[Quas]{Anthony Quas}
\address{Department of Mathematics \& Statistics, University of Victoria, Victoria, BC, Canada V8W 2Y2}
\email{\href{aquas@uvic.ca}{aquas@uvic.ca} }
\newcommand{\reqgap}{\gap}
\newcommand{\gap}{\ensuremath{\mathscr{G}}}
\newcommand{\blocklength}{\ensuremath{N}}
\newcommand{\target}[1]{#1^{\odot}}
\newcommand{\mr}[1]{#1}
\newcommand{\tarpert}[1]{#1^{\times}}
\newcommand{\tinygap}{\ensuremath{(\tfrac{\varepsilon}{4})^{N}}/(3dN)}
\begin{document}


\begin{abstract}
    We study the quantitative simplicity of the 
    Lyapunov spectrum of $d$-dimensional bounded matrix cocycles
    subjected to additive random perturbations. In dimensions 2 and 3, 
    we establish explicit lower bounds on the gaps between consecutive
    Lyapunov exponents of the perturbed cocycle, depending only on the  
    scale of the perturbation. 
    In arbitrary dimensions, we show existence of a universal lower bound on these gaps.
    A novelty of this work is that the bounds provided are uniform over all choices of the original sequence of matrices. 
    Furthermore, we make no stationarity assumptions on this sequence. Hence, our results apply to random and sequential dynamical systems alike.
\end{abstract}

\maketitle

\section{Introduction}
We study the Lyapunov spectrum of sequences of uniformly norm-bounded matrices with additive noise-like perturbations:
we start with a sequence of matrices, which we assume, without loss of generality, to be of norm at most 1. 
The entries of each matrix are independently perturbed by adding a random number
uniform in the range $[-\ep,+\ep]$.
The unperturbed matrices are denoted $(A_i)_{i\in\NN}$, and we write $A_{i,\ep}$ to denote the
perturbed matrix $A_i+\ep\Xi_i$ where $\Xi_i$ are independent matrix random variables with independent
entries uniformly distributed on $[-1,1]$. 
We are interested in universal quantitative simplicity of the Lyapunov spectrum. That is, 
we establish lower bounds on the exponential growth rate of the ratio of the $j$th and
$(j+1)$st singular values of the product $A_\ep^{(n)}=A_{n,\ep}\cdots A_{1,\ep}$.
Our bounds depend only on the maximum amplitude  $\ep$ of the 
noise, and not on the particular sequence of matrices. 

Our results are in part motivated by questions concerning the behaviour of Lyapunov exponents of differentiable dynamical systems in the presence of noise. If Lyapunov exponents are estimated using measurements of the derivative cocycle, it is natural to model the measurement error as a random perturbation of the underlying derivative cocycle. We focus on the abstracted question: given an arbitrary bounded sequence of matrices, subjected to noise-like perturbations, what can be said about simplicity of Lyapunov spectrum? Since our results are stated in this general context, it follows that for \textsl{all} unperturbed orbits of a dynamical system, and for almost all choices of the perturbations, we obtain universal gaps between consecutive Lyapunov exponents. By contrast, standard methods based on ergodic theory would likely yield results for almost all orbits. Finally, we remark that our methods are highly robust, and should apply to other kinds of absolutely continuous noise, including situations where the structure of the noise is dependent on the point in the dynamical system.

Since we place no stationarity assumption
on the original sequence, the limits defining the Lyapunov exponents may not exist. 
Specifically, we show the following.

\begin{mainthm}\label{mt:2x2}
    Let $0<\ep<1$. 
    There exists $c_2(\ep)>0$ such that 
    for any sequence $(A_n)$ of $2\times 2$ matrices, each of norm at most 1, 
    the random variable
    $$
    \liminf_{n\to\infty}\frac 1n\log
    \frac {s_1(A_\ep^{(n)})}
    {s_2(A_\ep^{(n)})}
    $$
    is almost surely constant; and the constant is at least $c_2(\ep)$.
    Further $c_2(\ep)>\exp(-1/\ep^{35})$ for sufficiently small $\ep$.
\end{mainthm}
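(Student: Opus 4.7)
My plan is to establish the two assertions separately: first the almost-sure constancy via a tail $\sigma$-algebra argument, then the quantitative lower bound through independent block estimates.

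For the constancy, modifying the first $k$ noise matrices $\Xi_1,\dots,\Xi_k$ replaces $A^{(n)}_\ep$ by $M_{n,k+1}\cdot C$, where $M_{n,k+1}=A_{n,\ep}\cdots A_{k+1,\ep}$ is a function of the tail and $C$ is an a.s.\ invertible random matrix depending only on $\Xi_1,\dots,\Xi_k$. The singular-value ratio changes only by a bounded (random) multiplicative factor under right multiplication by $C$, so $\tfrac{1}{n}\log(s_1/s_2)$ has the same $\liminf$ almost surely whether computed from $A^{(n)}_\ep$ or from $M_{n,k+1}$. Letting $k$ vary, this $\liminf$ is measurable with respect to the tail $\sigma$-algebra of $(\Xi_i)$, and Kolmogorov's 0--1 law forces it to be almost surely constant.

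For the explicit lower bound I would fix a block length $N=N(\ep)$, a target hyperbolicity $G=G(\ep)$, and a ``good'' subset of $2\times 2$ matrices consisting of matrices with $s_1/s_2\ge G$ whose top singular direction lies in a prescribed region $\Theta\subset\mathbb{RP}^1$. The key estimate is that, uniformly in the sequence $(A_n)$ and the starting index $i$,
\[
\PP\!\left(A_{i+N-1,\ep}\cdots A_{i,\ep}\text{ is good}\right)\ge p(\ep)>0.
\]
To prove it I would view the block product as a smooth map from the noise cube $[-\ep,\ep]^{4N}$ into $M_2(\RR)$; by fixing most of the $\Xi_j$ generically so that the conditional map on the remaining coordinates is a submersion onto a neighbourhood of a chosen hyperbolic target, the uniform noise density gives a quantitative pre-image lower bound, with the final factor used to steer the top singular direction into $\Theta$. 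With this block estimate in hand, independence across disjoint blocks and a Borel--Cantelli / strong-law argument produce a fraction at least $p$ of good blocks almost surely, while the directional control prevents successive good blocks from cancelling one another's hyperbolicity; a telescoping estimate then yields
\[
\liminf_{n\to\infty}\, \frac{1}{n}\log\frac{s_1(A^{(n)}_\ep)}{s_2(A^{(n)}_\ep)} \;\ge\; \frac{p(\ep)\log G(\ep)}{N(\ep)} \;=:\; c_2(\ep).
\]

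The principal obstacle is the block lemma: the deterministic factors $A_i$ may be arbitrarily degenerate (rank $0$ or $1$), so the noise alone must manufacture both the hyperbolicity $G$ and the directional alignment needed to feed into $\Theta$. Propagating the uniform density on $[-\ep,\ep]^4$ through $N$ compositions accumulates powers of $\ep$, and optimizing the trade-off among $N$, $G$, and $p$ as $\ep\to 0$ is precisely what determines the exponent in $c_2(\ep)\ge\exp(-1/\ep^{35})$. A subtler issue is maintaining compatibility of top singular directions across independent blocks \emph{without} any stationarity hypothesis on $(A_n)$; this is where the two-dimensional geometry (the scalar action on $\mathbb{RP}^1$) is crucial, and where the uniformity over arbitrary sequences is most delicate.
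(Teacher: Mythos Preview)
Your constancy argument is essentially the paper's (Lemma~\ref{lem:gap a.s. const}): modulo a bounded invertible prefix, the $\liminf$ is tail-measurable and Kolmogorov applies.

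The lower-bound strategy, however, has a real gap at the concatenation step. You propose to force each good block to have its top singular direction in a fixed region $\Theta$, and then assert that ``directional control prevents successive good blocks from cancelling one another's hyperbolicity.'' But good blocks are \emph{not} consecutive: between them sit the bad blocks, which are products of arbitrary $A_i$'s plus noise and can rotate the output direction of one good block arbitrarily before it meets the next. Even if both the input and output top singular directions of every good block lie in $\Theta$, the intervening bad block can send the image of $\Theta$ anywhere in $\mathbb{RP}^1$. In general $\log q_{1,2}(LR)$ can be as small as $|\log q_{1,2}(L)-\log q_{1,2}(R)|$, so the correction terms in your telescoping can be as large as the main terms. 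No purely geometric alignment condition on the good blocks survives this without stationarity.

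The paper handles this by a different mechanism. Each length-$N$ block contains a \emph{target sub-block} of length at most $N-1$, leaving at least one coordinate between consecutive targets whose noise $\Xi$ has not been ``used.'' The Gluing Lemma (Lemma~\ref{gluing lemma}) shows that for \emph{any} fixed invertible $L,R$ and any $A$ of norm at most $1$, the defect $F(L,A+\ep\Xi,R)=\log q_{1,2}(L(A+\ep\Xi)R)-\log q_{1,2}(L)-\log q_{1,2}(R)$ has exponential tails uniformly, with $\EE F\ge 4d\log\ep-K$. One then conditions on all non-transition noise, applies a martingale strong law (Theorem~\ref{thm SLLN}) to the sequence of defects at transition coordinates, and chooses the target gap $\gap$ large enough that $\frac12\log\gap$ beats $|\EE F|$. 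This is what replaces your directional alignment, and it is where the $8d$ in $\gap\sim\ep^{-8d}$ (hence ultimately the exponent $35$) comes from.

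Your submersion idea for the block estimate might be salvageable, but the paper's explicit target construction (Lemma~\ref{lem:tard}) is cleaner and gives the quantitative control: either the evolved singular-vector images stay $1/\gap$-separated throughout the block, in which case one boosts the top direction by near-identity multiplications (Lemma~\ref{lem:far}), or two orthogonal vectors collapse to angle $\le 1/\gap$ at some step $n$, in which case $M_n\cdots M_1$ already has $s_1/s_2\ge\gap$ (Lemma~\ref{lem:closed2}). Either way one needs only $N\sim (16/\ep)\gap\log\gap$, and the final bound is $c_2(\ep)=p/N$ with $p=(\ep/4)^{d^2N^2}/(3dN)^{d^2N}$.
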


\begin{mainthm}\label{mt:3x3}
    Let $0<\ep<1$. 
    There exists $c_3(\ep)>0$ such that 
    for any sequence $(A_n)$ of $3\times 3$ matrices, each of norm at most 1, 
    and for $j=1$ or $2$, the random variable
    $$
    \liminf_{n\to\infty}\frac 1n\log
    \frac {s_j(A_\ep^{(n)})}
    {s_{j+1}(A_\ep^{(n)})}
    $$
    is almost surely constant; and the constant is at least $c_3(\ep)$.
    Further $c_3(\ep)>\exp(-1/\ep^{867})$ for sufficiently small $\ep$.
\end{mainthm}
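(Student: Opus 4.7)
The plan is to follow the blueprint of Theorem~\ref{mt:2x2} and extend it to three dimensions via exterior powers. The almost-sure constancy is the easier part. The random variable
$$
\Lambda_j := \liminf_{n\to\infty}\frac{1}{n}\log\frac{s_j(A_\ep^{(n)})}{s_{j+1}(A_\ep^{(n)})}
$$
is invariant under replacing finitely many noise matrices $\Xi_1,\dots,\Xi_K$ by arbitrary new values: such a modification multiplies $A_\ep^{(n)}$ on the right by a fixed matrix $C$ (independent of $n$) which is almost surely invertible with bounded operator norm, and so alters $\log s_j(A_\ep^{(n)})$ by only $O(1)$. Hence $\Lambda_j$ is measurable with respect to the tail $\sigma$-algebra of the i.i.d.\ sequence $(\Xi_i)$, and Kolmogorov's $0$--$1$ law forces $\Lambda_j$ to be almost surely constant.

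For the quantitative lower bound, the identity $s_j(M)/s_{j+1}(M)=s_1(\wedge^j M)/s_2(\wedge^j M)$ allows the $j=2$ gap for $3\times 3$ products to be recast as a top-gap problem for the induced cocycle on $\wedge^2\RR^3\cong\RR^3$. It therefore suffices to prove: for any sequence of $3\times 3$ matrices of norm at most one, the top singular value of the associated perturbed product separates from the second at an exponential rate depending only on $\ep$. The complications relative to $d=2$ are twofold. Geometrically, one must preclude the product from concentrating onto any line (the $j=1$ gap) and also onto any $2$-plane (the $j=2$ gap). Probabilistically, the entries of the induced perturbations on $\wedge^2\RR^3$ are correlated polynomial expressions in the entries of the original $\Xi_i$, so the arguments must exploit only the independence and uniformity of those underlying entries rather than any tidy uniform distribution on the induced cocycle.

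The main technical step is a block-construction lemma of the following shape: for each $\ep>0$ there is a block length $\blocklength=\blocklength(\ep)$ and, uniformly over every possible singular-value configuration of the accumulated product at the start of a block, a favourable event $\sF_\ep$ on the next $\blocklength$ noise matrices of probability at least $p(\ep)>0$, on which both singular-value ratios of the full product are improved by an additive amount of at least $\log\gap(\ep)>0$ in the log. Iterating over disjoint blocks, the indicators of $\sF_\ep$ form an i.i.d.\ Bernoulli sequence, so the strong law of large numbers yields the almost-sure lower bound $c_3(\ep)\ge p(\ep)\log\gap(\ep)/\blocklength$, provided one also controls the fluctuation of the log-ratios on unfavourable blocks (which is standard, given that the $A_{n,\ep}$ have uniformly bounded norm).

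The \textbf{main obstacle} is proving the block lemma uniformly in the worst-case state, in which the accumulated product has three nearly-coincident singular values: escaping this configuration while simultaneously opening up both gaps requires an alignment of the noise whose probability compounds across multiple stages and across the two gaps. Careful bookkeeping of this compounding is what inflates the small-$\ep$ exponent to $867$, as compared with $35$ in Theorem~\ref{mt:2x2}.
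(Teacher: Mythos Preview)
Your constancy argument via the tail $\sigma$-algebra is fine and matches the paper's Lemma~\ref{lem:gap a.s. const}. The rest of the proposal, however, is a plan rather than a proof, and the plan has a structural flaw.

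The exterior-power identity $s_2(M)/s_3(M)=s_1(\wedge^2 M)/s_2(\wedge^2 M)$ is correct, but it does not reduce the $j=2$ problem to an already-solved $j=1$ problem. The induced cocycle on $\wedge^2\RR^3$ consists of the matrices $\wedge^2(A_n+\ep\Xi_n)$, whose entries are \emph{quadratic} polynomials in the entries of $\Xi_n$. This is not an additive perturbation $B_n+\ep'\Xi_n'$ with $\Xi_n'$ having independent bounded-density coordinates, so neither the gluing estimate (Lemma~\ref{gluing lemma}) nor the target-hitting probability bound (Corollary~\ref{cor pos measure}) transfers. You acknowledge the correlation issue but then simply assert that ``the arguments must exploit only the independence and uniformity of those underlying entries'' without saying how; at that point you are no longer using the exterior-power reduction at all, and the problem is back to where it started.

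More fundamentally, the ``block-construction lemma of the following shape'' is exactly the content of the theorem, and you do not construct it. The paper's proof is almost entirely devoted to this construction: for $3\times3$ matrices it splits into three geometric cases (Lemma~\ref{lem:tar3}) depending on whether the evolved singular frame stays $\eta^2$-spread, whether $v_1^n$ nearly aligns with $v_2^n$, or whether the fast $2$-plane nearly contains $v_3^n$, with each case handled by a different explicit near-identity perturbation (Lemmas~\ref{lem:far}, \ref{lem:12close}, \ref{lem:build12gap13close}). The $j=2$ case is then obtained not via exterior powers but via complementarity (Lemma~\ref{lem:complementarity}): one passes to the block of \emph{inverses} in reverse order, for which a $(1,2)$-target becomes a $(2,3)$-target. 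This keeps the additive i.i.d.\ perturbation structure intact, which is precisely what your $\wedge^2$ approach loses. The exponent $867$ then falls out of the chain $\gap\sim\ep^{-24}$, $\eta=\gap^{-9}$, $N\sim\ep^{-433}$, $c_3(\ep)\sim(\ep/4)^{d^2N^2}$; your proposal gives no mechanism that would produce any specific exponent.
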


In the case $d>3$, we have the following theorem showing that the Lyapunov spectrum is (quantitatively) non-trivial.
\begin{mainthm}\label{mt:dxd}
Let $d>3$ and $0<\ep<1$. 
    There exists $c_d'(\ep)>0$ such that 
    for any sequence $(A_n)$ of $d\times d$ matrices, each of norm at most 1, 
    the random variable
    $$
    \liminf_{n\to\infty}\frac 1n\log
    \frac {s_1(A_\ep^{(n)})}
    {s_d(A_\ep^{(n)})}
    $$
    is almost surely constant; and the constant is at least $c_d'(\ep)$.
    For small $\ep$, $c_d'(\ep)>\exp(-1/\ep^{16d+3})$.
\end{mainthm}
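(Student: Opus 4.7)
The strategy is to reduce Theorem~\ref{mt:dxd} to Theorem~\ref{mt:2x2} via a $2$-dimensional restriction of the cocycle. The key elementary observation is that for any $d \times d$ matrix $M$ and any $2$-dimensional subspace $V \subset \RR^d$, the restricted operator $M|_V \colon V \to \RR^d$ has two non-zero singular values $s_1(M|_V) \ge s_2(M|_V)$ satisfying $s_1(M|_V) \le s_1(M)$ and $s_2(M|_V) \ge s_d(M)$, by the min-max characterization. Hence
\[
\frac{s_1(M|_V)}{s_2(M|_V)} \le \frac{s_1(M)}{s_d(M)},
\]
so a lower bound on the left-hand side for \emph{any} choice of $V$ transfers directly to a lower bound on $s_1(A_\ep^{(n)})/s_d(A_\ep^{(n)})$.

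Fix an arbitrary $2$-dimensional subspace $V_0 \subset \RR^d$, for instance $V_0 = \operatorname{span}(e_1, e_2)$, and propagate it along the perturbed cocycle by setting $V_k = A_\ep^{(k)} V_0$, which is almost surely $2$-dimensional. Choosing measurable orthonormal bases of each $V_k$, the restriction $A_{k,\ep}|_{V_{k-1}} \colon V_{k-1} \to V_k$ becomes a $2 \times 2$ matrix $B_k$, and the product $B_n \cdots B_1$ realizes $A_\ep^{(n)}|_{V_0}$ in these bases. Substituting $A_{k,\ep} = A_k + \ep \Xi_k$ and unwrapping the change of basis yields a decomposition $B_k = \tilde A_k + \ep \tilde \Xi_k$, with $\|\tilde A_k\| \le 1$ and $\tilde \Xi_k$ a $2 \times 2$ random matrix whose entries are linear combinations of the independent uniform entries of $\Xi_k$, the coefficients being determined by the prior perturbations $\Xi_1, \ldots, \Xi_{k-1}$. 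Although $\tilde \Xi_k$ is not uniformly distributed, conditional on the past it retains strong anti-concentration (bounded density on an interval of size $\Theta(1)$) inherited from the uniformity of $\Xi_k$.

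The remaining step is to apply a suitable conditional or generalized version of Theorem~\ref{mt:2x2} to the cocycle $(B_k)$, obtaining exponential growth of $s_1(B_n \cdots B_1)/s_2(B_n \cdots B_1)$ at a universal rate $c_d'(\ep)$; combined with the inequality displayed above, this proves Theorem~\ref{mt:dxd}. The quantitative bound $\exp(-1/\ep^{16d+3})$ should emerge from tracking how the effective noise scale degrades in the reduction: projecting the $d \times d$ uniform noise onto the random $2$-dimensional subspaces $V_{k-1}, V_k$ shrinks the anti-concentration constants by a factor polynomial in $d$, and feeding this into the exponent $35$ of Theorem~\ref{mt:2x2} (the case $d = 2$, consistent with $16 \cdot 2 + 3 = 35$) should produce the linear dependence on $d$. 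The principal obstacle is precisely this generalization of Theorem~\ref{mt:2x2}: the induced noise $\tilde \Xi_k$ is not independent and identically distributed uniform but rather an adapted process whose distribution depends on the random subspaces $V_{k-1}, V_k$, so one must verify that the $2$-dimensional gap mechanism---presumably a quantitative equidistribution argument on the projective line---continues to function under this weaker anti-concentration and adaptation hypothesis. A secondary obstacle is to propagate the effective noise scale sharply enough to land precisely on the exponent $16d + 3$.
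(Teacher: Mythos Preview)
Your proposal takes a route entirely different from the paper's, and it has real gaps.

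\textbf{The paper's approach.} The paper does \emph{not} reduce to the two-dimensional case. It works directly in $\RR^d$ via the machinery of \S\ref{S: gen strategy}: Lemma~\ref{lem:tard} supplies a target for \emph{any} block of invertible $d\times d$ matrices (either the block is $1/\gap$-spread and Lemma~\ref{lem:far} applies, or two orthogonal directions collapse and Lemma~\ref{lem:closed2} gives the gap outright), and Theorem~\ref{thm:meta} converts this into the claimed lower bound. The exponent $16d+3$ arises because the Gluing Lemma forces $\gap\sim\ep^{-8d}$, hence $N\sim\ep^{-(8d+1)}$, and then the hitting probability $p\sim\ep^{d^2N^2}$ yields $c_d'(\ep)\sim\exp(-\ep^{-2(8d+1)+o(1)})$.

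\textbf{Issues with your reduction.} First, the decomposition $B_k=\tilde A_k+\ep\tilde\Xi_k$ with $\tilde A_k$ past-measurable is not as clean as you suggest. The orthonormal basis of $V_k=(A_k+\ep\Xi_k)V_{k-1}$ depends on $\Xi_k$, so the matrix entries $\langle u_i^{(k)},(A_k+\ep\Xi_k)u_j^{(k-1)}\rangle$ mix $\Xi_k$ into \emph{both} terms; there is no natural way to isolate an ``unperturbed'' $\tilde A_k$ of norm at most $1$ that is independent of $\Xi_k$. Second, even granting some conditional anti-concentration for $\tilde\Xi_k$, the machinery behind Theorem~\ref{mt:2x2} (targets, Lemma~\ref{lem: tri ineq}, the Gluing Lemma) is written for i.i.d.\ uniform noise, and reworking it for an adapted process with merely bounded conditional density is a substantial project, not a formality. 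Third, your numerology is off: the coincidence $16\cdot 2+3=35$ is an artifact of the paper's method applied at $d=2$, not evidence that your reduction recovers $16d+3$. If projecting onto a $2$-plane only rescales the effective noise by a factor polynomial in $d$, feeding $\ep'=\ep/d^a$ into $c_2(\ep')\ge\exp(-1/(\ep')^{35})$ gives $\exp(-d^{35a}/\ep^{35})$, with exponent $35$ on $\ep$ regardless of $d$---not $16d+3$.
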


\begin{mainthm}\label{mt:ort+unif}
   Let $d>1$ be arbitrary. If the unperturbed $d\times d$ matrices $A_n$ are orthogonal for each $n\in\NN$, then there exists a constant $c(\ep)>0$ depending only on $\ep$ such that for each $1\leq j<d$ we have
   \begin{align*}
    \liminf_{n\to\infty}\frac1n\log\frac{s_j(A_\ep^{(n)})}{s_{j+1}(A_\ep^{(n)})}\geq c(\ep).
   \end{align*}
   For any $\delta>0$, for all small $\ep$, $c(\ep)>\exp(-1/\ep^{2+\delta})$.
\end{mainthm}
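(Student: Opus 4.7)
The strategy is to exploit the fact that orthogonal base matrices preserve all norms exactly, so the perturbed cocycle $Q_n := A_n + \ep\Xi_n$ is a genuinely small random perturbation of an isometry at every step. This removes the principal difficulty of the general case---that $A_n$ may contract directions against which the perturbation is trying to act---and makes a uniform gap estimate accessible in every dimension and for all $d-1$ consecutive singular-value pairs simultaneously.

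As a first step I would use orthogonal invariance of singular values to reduce a block product to a perturbation of the identity. Writing $U_i := A_i\cdots A_1$ (with $U_0 = I$), a direct expansion of the product yields
\begin{equation*}
U_N^\top (Q_N\cdots Q_1) = I + \ep S_N + O(\ep^2 N^2), \qquad S_N := \sum_{i=1}^N U_i^\top \Xi_i U_{i-1},
\end{equation*}
so the singular values of $Q_N\cdots Q_1$ equal those of $I + \ep S_N + O(\ep^2 N^2)$. Each summand in $S_N$ is a linear, orthogonal transformation of $\Xi_i$, hence the $M_i := U_i^\top \Xi_i U_{i-1}$ are independent, mean zero, bounded in norm by $\sqrt d$, and carry a bounded density on $M_d(\RR)$. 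The rescaled sum $S_N/\sqrt N$ therefore admits a local-CLT-type bound with density lower-bounded on a set of positive volume, uniformly in the choice of the orthogonals $U_i$.

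The central step is a quantitative block gap estimate: choose $N=N(\ep)$ with $\ep N \ll 1$ (so the $O(\ep^2 N^2)$ error stays below the target gap) and show that with probability at least $p(\ep,N)>0$ the symmetrized sum $\tfrac12(S_N + S_N^\top)$ has all $d$ eigenvalues pairwise separated by at least $\gamma\sqrt N$. Anti-concentration for sums of independent random symmetric matrices with densities supplies such an estimate: the event that $\frac{1}{\sqrt N}\cdot\tfrac12(S_N+S_N^\top)$ lies within a $\gamma$-ball around a prescribed diagonal matrix with $d$ well-separated entries is a ball-density event whose probability is polynomial in $\gamma$, with exponent driven by the $d(d+1)/2$ degrees of freedom of a symmetric matrix. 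Using the expansion $s_j(Q_N\cdots Q_1)^2 = 1 + 2\ep\lambda_j + O(\ep^2 N^2)$, on this favorable event the singular-value ratios $s_j/s_{j+1}$ exceed $1 + c\ep\gamma\sqrt N$ for every $j<d$ simultaneously.

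Finally, I would combine this block estimate with a Borel--Cantelli / subadditive argument over disjoint blocks. Independence ensures that almost surely the asymptotic fraction of favorable blocks is at least $p/2$; on favorable blocks $\log(s_j/s_{j+1})$ grows by $\Omega(\ep\gamma\sqrt N)$, while on unfavorable blocks it can decrease by at most the bi-Lipschitz factor $2N\log\bigl((1+\ep\sqrt d)/(1-\ep\sqrt d)\bigr) = O(\ep\sqrt d\, N)$. Balancing the two regimes yields a deterministic positive lower bound on $\liminf\frac1n\log(s_j/s_{j+1})$ holding for each $j<d$. The main obstacle will be calibrating $N$ and $\gamma$ so that the resulting bound reaches $c(\ep) > \exp(-1/\ep^{2+\delta})$: the trade-offs are among the polynomial anti-concentration probability, the linearization window $\ep N \ll 1$, and the loss from unfavorable blocks, and the exponent $2+\delta$ presumably emerges from the tension between these three constraints, with $\delta$ absorbing logarithmic slack. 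This quantitative optimization is where the bulk of the technical work is concentrated.
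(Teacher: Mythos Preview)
Your linearization $U_N^\top(Q_N\cdots Q_1)=I+\ep S_N+O(\ep^2N^2)$ and the anti-concentration idea for the eigenvalue spacings of $\tfrac12(S_N+S_N^\top)$ are reasonable ingredients for a \emph{per-block} gap estimate. The real gap is in the last step, where you ``combine this block estimate with a Borel--Cantelli / subadditive argument over disjoint blocks''.

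The quantity $\log\bigl(s_j/s_{j+1}\bigr)$ is neither additive nor sub-/super-additive under matrix products. Your claim that ``on favorable blocks $\log(s_j/s_{j+1})$ grows by $\Omega(\ep\gamma\sqrt N)$'' conflates the block's \emph{own} ratio $s_j(B)/s_{j+1}(B)$ with the \emph{increment} $\log q_{j,j+1}(BP)-\log q_{j,j+1}(P)$ in the running product. These are unrelated in general: if the singular directions of $B$ are misaligned with those of $P$, a block with a large internal gap can erase the accumulated gap. (Concretely, $P=\mathrm{diag}(2,1,1)$ and $B=\mathrm{diag}(1,1,2)$ each have $s_1/s_2=2$, yet $BP=\mathrm{diag}(2,1,2)$ has $s_1/s_2=1$.) Your bi-Lipschitz bound $|\log q_{j,j+1}(BP)-\log q_{j,j+1}(P)|\le N\log\kappa_1$ controls the \emph{size} of the increment but not its sign, so it cannot turn a favorable block into a guaranteed positive increment. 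Even granting additivity, the balance is backwards: the loss $O(\ep N)$ on an unfavorable block dominates the putative gain $O(\ep\gamma\sqrt N)$, so positivity would force the favorable probability to be $1-O(\gamma/\sqrt N)$, which in turn forces $\gamma\to 0$ and kills the gain.

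The paper confronts exactly this non-additivity with the gluing lemma (Lemma~\ref{gluing lemma}): inserting a single matrix with a bounded density between fixed blocks $L,R$ makes $\log q_{j,k}\bigl(L(A+\ep\Xi)R\bigr)-\log q_{j,k}(L)-\log q_{j,k}(R)$ a random variable with uniformly exponential tails, so log-gaps become \emph{approximately} additive with a controlled mean defect $\lambda\ge 4d\log\ep-K$. The orthogonal case is then handled by a different mechanism than your CLT: since orthogonal matrices carry orthonormal frames to orthonormal frames, every block is automatically $\eta$-spread for a fixed constant $\eta$, and Lemma~\ref{lem:far} manufactures a deterministic near-identity target with the required gap $\gap$ in block length $N\sim\ep^{-1}\log\gap$. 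Feeding $\gap\sim\ep^{-8d}$ into the bookkeeping of Theorem~\ref{thm:meta} (whose proof is where the gluing lemma is used) gives $c(\ep)=p/N$ with $p\sim(\ep/4)^{d^2N^2}$, hence $c(\ep)=\exp\bigl(-\ep^{-(2+o(1))}\bigr)$; the exponent $2$ comes from $N^2\sim\ep^{-2}$ in the target-hitting probability, not from a CLT scaling.
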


We remark that this result should be compared with \cite{BednarskiQuas}, where the matrices $(A_n)$
are orthogonal and the perturbations
$\Xi$ are taken to be matrices with independent standard normal entries. In that paper, using symmetry properties of 
multi-variate normal random variables, an exact expression for the Lyapunov exponents is obtained, as well as the leading
terms of a Taylor-like expansion. In any dimension in that setting, the gaps obtained are asymptotically $\ep^2$. 

The next theorem provides universal positive lower bounds on all Lyapunov exponent gaps for perturbed cocycles.
This result
gives no quantitative information
on the gaps, beyond positivity,
 because it relies 
on compactness, and does not exhibit an explicit mechanism for the gaps.
\begin{mainthm}\label{mt:dxd2}
Let $d\ge 2$ and $0<\ep<1$. 
    There exists $c_d(\ep)>0$ such that 
    for any sequence $(A_n)$ of $d\times d$ matrices, each of norm at most 1, and for each $1\le j\le d-1$,
    the random variable
    $$
    \liminf_{n\to\infty}\frac 1n\log
    \frac {s_j(A_\ep^{(n)})}
    {s_{j+1}(A_\ep^{(n)})}
    $$
    is almost surely constant; and the constant is at least $c_d(\ep)$.
\end{mainthm}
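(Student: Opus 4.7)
The almost-sure constancy of the liminf is a standard consequence of Kolmogorov's zero-one law, since $\liminf \frac{1}{n}\log(s_j/s_{j+1})$ is tail-measurable with respect to the i.i.d.\ sequence $(\Xi_i)_{i\in\NN}$. The nontrivial content is the universal positive lower bound, which I would attack by contradiction using two layers of compactness, together with the subadditive ergodic theorem.

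Suppose the conclusion fails for some $d$ and some $j$. Then for every $k \in \NN$ there is a sequence $\underline{A}^{(k)} \in X := B_1(M_d(\RR))^{\NN}$, compact and metrizable in the product topology, for which the a.s.\ liminf of the $j$-th gap is less than $1/k$. The space $\mathcal{M}_\sigma(X)$ of shift-invariant Borel probability measures on $X$ is weak-$*$ compact (Prokhorov). The plan is to harvest ``bad'' scales $n_m^{(k)} \to \infty$ along which the $j$-th gap is small with positive probability, form the empirical averages $\frac{1}{n_m^{(k)}}\sum_{\ell=0}^{n_m^{(k)}-1}\delta_{\sigma^\ell \underline{A}^{(k)}}$, and extract by diagonalization a weak-$*$ limit $\mu^* \in \mathcal{M}_\sigma(X)$ encoding the asymptotic ``base statistics'' of a worsening sequence of counterexamples.

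Over the extended stationary system $(X \times \Omega, \sigma \times \tau, \mu^* \times \lambda^{\NN})$ --- where $\Omega := ([-1,1]^{d^2})^{\NN}$, $\lambda^{\NN}$ is the normalized Lebesgue product, and the cocycle is $\mathcal{A}(\underline{A}, \Xi) := A_1 + \ep\,\Xi_1$ --- the subadditive ergodic theorem applied to the exterior-power norms $\log\|\Lambda^i \mathcal{A}^{(n)}\|$ yields deterministic Lyapunov exponents $\lambda_1(\mu^*) \ge \cdots \ge \lambda_d(\mu^*)$; the $j$-th gap $\lambda_j(\mu^*) - \lambda_{j+1}(\mu^*)$ is what we need to control. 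I would then combine two complementary facts to close the contradiction: a \emph{transfer} step, establishing that the small $j$-th gaps along $\underline{A}^{(k)}$ persist in the limit and force $\lambda_j(\mu^*) = \lambda_{j+1}(\mu^*)$; and a \emph{simplicity} step, asserting that for every $\mu \in \mathcal{M}_\sigma(X)$ the perturbed cocycle has simple spectrum, so $\lambda_j(\mu) > \lambda_{j+1}(\mu)$.

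The transfer step should follow from upper semicontinuity, under weak-$*$ convergence of the base, of the exterior-power functionals $\mu \mapsto \lim_n \tfrac{1}{n}\int \EE \log\|\Lambda^i \mathcal{A}^{(n)}\|\,d\mu$, together with Fekete-type monotonicity applied to the subadditive cocycles $\log\|\Lambda^i \mathcal{A}^{(n)}\|$ with uniformly bounded one-step increments. The genuine obstacle is the simplicity step, a qualitative Furstenberg-type statement uniform in the invariant base measure $\mu$. The mechanism I would pursue is that the i.i.d., absolutely continuous perturbation makes the induced Markov chain on the Grassmannian $\mathrm{Gr}(j,d)$ have a transition kernel possessing a density, ruling out any closed invariant proper sub-Grassmannian and placing a Goldsheid--Margulis--Guivarc'h-style simplicity criterion at our disposal, uniformly over $\mu$. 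Because both compactness arguments are non-constructive, this approach yields only the existence of $c_d(\ep) > 0$ with no explicit rate, consistent with the statement of the theorem.
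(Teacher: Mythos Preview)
Your overall architecture---contradiction, compactness to manufacture a stationary base, then a simplicity theorem---matches the paper's. But the transfer step, as you describe it, has a genuine gap. Upper semicontinuity of each exterior-power functional $\mu\mapsto\chi_i(\mu)=\inf_n\frac1n\int\EE\log\|\Lambda^i\mathcal A^{(n)}\|\,d\mu$ is correct, but the quantity you need to bound above is the gap $\lambda_j-\lambda_{j+1}=2\chi_j-\chi_{j-1}-\chi_{j+1}$, a signed combination of u.s.c.\ functions which is neither upper nor lower semicontinuous in $\mu$. There is no mechanism forcing the small gaps along your approximating sequence to persist at the weak-$*$ limit $\mu^*$. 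A second issue: your empirical measures live on the \emph{unperturbed} base $X$, while the smallness assumption is about the \emph{perturbed} products $A_\ep^{(n)}$ along one particular non-stationary orbit; the passage from the latter to Lyapunov data of the former is exactly the delicate point, and Fekete monotonicity does not bridge it.

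The paper avoids this entirely by inserting an intermediate reformulation (Theorem~\ref{thm:meta} and Corollary~\ref{cor:metareform}): a universal positive gap is equivalent to the statement that every block of some fixed length $N$ admits a nearby ``target'' block with $s_j/s_{j+1}>\gap$. Negating this gives, for every $N$, a block of unperturbed matrices of length $N$ for which \emph{no} perturbation of size $\ep/(4d)$ achieves ratio exceeding $\gap$. That is a closed condition on finite blocks of unperturbed matrices, so it passes to subsequential limits in $S^\ZZ$ directly (Lemma~\ref{lem:compact}); Krylov--Bogoliubov and an ergodic decomposition then yield an ergodic base measure on which the perturbed cocycle has $\lambda_j^{\ep/(4d)}=\lambda_{j+1}^{\ep/(4d)}$. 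No semicontinuity of Lyapunov exponents is invoked.

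Your simplicity step is also too optimistic. Goldsheid--Margulis and Guivarc'h--Raugi treat i.i.d.\ matrix products; here the perturbed cocycle is a skew product over an arbitrary ergodic base, and the induced kernel on $\mathrm{Gr}(j,d)$ is base-dependent, so those criteria do not apply out of the box. The paper's simplicity result (Theorem~\ref{thm:simp}) is proved by a direct construction: working inside each Oseledets block with multiplicity, using Rokhlin towers and the explicit ``spread/nearly aligned'' target mechanism of Section~\ref{S:2dtarget} to build, inductively, small extensions of the cocycle that split one multiplicity at a time while leaving the remaining spectrum undisturbed (via Lemma~\ref{lem:semi-cont}).
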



We point out that although Theorems A-E are stated for sequences of matrices of norm at most 1, the theorems immediately
apply to sequences of matrices of norm bounded by any constant $K$ if one replaces the gap $c(\epsilon)$ by
the gap $c(\epsilon/K)$.

Also, we do not expect our results to be optimal (or close to optimal): our proofs work by identifying 
very narrow ``targets" that are perturbations of blocks of the original sequence such that each time the randomly perturbed sequence
of matrices passes through one of the targets, one obtains \emph{in expectation} an increment in the logarithmic ratio
of the $j$th and $(j+1)$st singular values. A law of large numbers is then used to give a lower bound for the long-time
almost sure gap. Since we only identify a single target for a block, and do not account for any increases in the ratio of the singular
values that take place other than those occurring as a result of passing through the target perturbation, the actual gap should be
much larger.
It is natural to conjecture that the minimal gaps are obtained in the case where all the unperturbed 
matrices are the identity (since in that
case, the only potential source of growth of the gap between Lyapunov exponents is the random perturbation),
so, given the result of \cite{BednarskiQuas}, one may ask whether it is the case that $c_d(\epsilon)\gtrsim \epsilon^2$.

Finally it is worth pointing out that while we focus on the case of uniform i.i.d.\  perturbations, our methods would also apply 
in the case of absolutely continuous i.i.d.\ perturbations with density bounded away from infinity; 
and away from zero around 0.

\subsection{Context}

The foundational work of Lyapunov \cite{Lyapunov} relates quantities of the form
$$\lim_{t\to\infty} \frac{1}{t}\log\| \varphi(t)v \|$$ 
to stability of solutions of differential equations in $\mathbb R^d$, where $v\in \mathbb R^d$ and
$\varphi(t)$ is a $d\times d$ matrix —the fundamental solution matrix.
These quantities, now called \textit{Lyapunov exponents},
have become fundamental for the study of stability and chaos in dynamical systems. Roughly speaking, negative Lyapunov exponents correspond to stable behaviour and positive Lyapunov exponents are a sign of chaos. 

For systems observed in discrete time, Lyapunov exponents take the form
$$\lim_{n\to\infty} \frac{1}{n}\log\| A_n \dots A_1 v \|,$$ 
where $A_1, A_2, \dots$ are $d\times d$ matrices and $v\in \mathbb R^d$.
Furstenberg and Kesten showed in \cite{FurstenbergKesten} 
that if the matrices $A_j$ are drawn from an ergodic process, then the limit 
$\lim_{n\to\infty} \frac{1}{n}\log\| A_n \dots A_1 \|$ exists and is almost surely constant, although  not necessarily finite. In a similar setting,
Oseledets showed in \cite{Oseledets} that there 
are between 1 and $d$ distinct Lyapunov exponents, $\lambda_1>\dots>\lambda_k$,
with multiplicities $m_1,\ldots,m_k$ summing to $d$,
quantifying the exponential rates of expansion or contraction that different vectors can experience, asymptotically. 
We refer the reader to the monograph of Viana \cite{VianaBook} for classical theory and recent developments on this topic.

Lyapunov exponents are generally difficult to compute and approximate. 
In fact,
determining whether Lyapunov exponents are non-zero or if there are multiplicities remain difficult problems in general.
A $d$-dimensional system is said to have \textit{simple Lyapunov spectrum} if it has $d$ distinct Lyapunov exponents. The question of simplicity of Lyapunov spectrum has received considerable attention in the literature, first in the case of i.i.d.\ matrices and then under increasingly more general stationary driving processes. Works in this direction include
Guivarc’h and Raugi \cite{GuivarchRaugi};  Gol’dsheid and Margulis \cite{GoldsheidMargulis};
Arnold and Cong \cite{ArnoldCong97, ArnoldCong99};
Bonatti and Viana \cite{BonattiViana04}; Avila and Viana \cite{AvilaViana07-1}; Matheus, M\"oller and Yoccoz \cite{MatheusMollerYoccoz-sl}; Poletti and Viana \cite{PolettiViana-sl}; Backes, Poletti, Varandas and Lima \cite{BPVL-sl}. Establishing simplicity of Lyapunov spectra allowed Avila and Viana to resolve the Zorich–Kontsevich conjecture in \cite{AvilaViana07}. 

The works above build on the theory developed by Furstenberg \cite{Furstenberg63}.
This theory has been used to obtain qualitative rather than quantitative results. That is, simplicity is established without providing explicit information on the gaps between Lyapunov exponents. It is also worth mentioning that a non-stationary version of Furstenberg's
theorem for random matrix products has been recently proved by Gorodetski and Kleptsyn in \cite{GorodetskiKleptsyn}.

Despite significant progress, Lyapunov exponents for products of matrices  are still a source of major challenges, even in dimension two. For instance, the study of Lyapunov exponents for two-dimensional maps, such as the H\'enon map and the Chirikov standard map, falls in this category.
Benedicks and Carleson have shown
positive Lyapunov exponents for H\'enon maps in  \cite{BenedicksCarleson}. 
While the problem remains unsolved for the standard map, Blumenthal, Xue and Young in \cite{BlumenthalXueYoung17, BlumenthalXueYoung18},  have established positive lower bounds on Lyapunov exponents for very small stochastic perturbations of the standard map, and other two-dimensional maps. 





From a broader perspective, our results may be contrasted with previous works on 
genericity of trivial Lyapunov spectrum.
The papers of Bochi \cite{Bochi} and of Bochi and Viana \cite{BochiViana} show that for a generic
non-hyperbolic cocycle (over a continuous invertible base), the Lyapunov spectrum is trivial. That is, 
they show that by making very careful perturbations to the cocycle, one can get the Lyapunov 
exponents to collapse. In contrast, our results show that for any initial cocycle  
(including non-stationary) and almost every sequence of random perturbations, the 
Lyapunov spectrum is simple, and in dimension two or three, they provide explicit bounds on 
the gaps, only depending on the perturbation size. 

The key source of difficulty in the general setting with Lyapunov exponents is that singular values may be far 
from multiplicative. One has bounds such as $s_1(A)s_d(B)\le s_1(AB)\le s_1(A)s_1(B)$ which are far too weak
to control limits. Indeed, this issue underlies the theorems of Bochi and Bochi--Viana. On the other hand, in our randomly perturbed setting, singular 
values are approximately multiplicative (as embodied here by Lemma \ref{gluing lemma}) and this is what makes
our theorems work.
This approach builds on previous work of Froyland, Gonz\'alez-Tokman and 
Quas \cite{FGTQ}, and is ultimately inspired by work of Ledrappier and Young \cite{LedrappierYoung91}.
The idea of controlling sub-multiplicative quantities with nearly multiplicative ones 
has also been exploited in other studies of Lyapunov exponents, in the form of an avalanche principle, 
introduced by Goldstein and Schlag in \cite{GoldsteinSchlag} and expanded upon in Duarte and Klein's 
monograph \cite{DuarteKlein}.
Apart from specific cases where Lyapunov exponents have been fully computed (see \cite{BednarskiQuas} and references therein), this work seems to be the first one where explicit  lower bounds on Lyapunov exponent gaps have been found for random perturbations of cocycles with arbitrary Lyapunov spectrum.

The use of \textit{stochastic perturbations} as a tool for understanding dynamical systems has a long history, described e.g. by Kifer in \cite{KiferBook}. 
More recently, the potential of this approach to provide theoretical and practical insights into the long-term behaviour of complex dynamical systems has been highlighted by Young in \cite{Young08,Young13}. 
On one hand, random perturbations are a natural way to model phenomena evolving in the presence of noise.
On the other hand, by taking zero-noise limits, stochastic perturbations have been effectively used to gain information about dynamical systems, since the work of Khas'minskii \cite{Khasminskii}.
Results about stochastic stability and Lyapunov exponents for randomly perturbed dynamical systems
include the works of 
Young \cite{Young86},
Ledrappier and Young \cite{LedrappierYoung91};
Imkeller and Lederer \cite{ImkellerLederer};
Baxendale and Goukasian \cite{BaxendaleGoukasian};
Cowieson and Young \cite{CowiesonYoung};
Lian and Stenlund \cite{LianStenlund};
Froyland, Gonz\'alez-Tokman and Quas \cite{FGTQ, FGTQ19};
Blumenthal and Yun \cite{BlumenthalYun};
Chemnitz and Engel \cite{ChemnitzEngel};
Bednarski and Quas \cite{BednarskiQuas}.
Progress on the related question of continuity of Lyapunov exponents has been recently reviewed by Viana in \cite{VianaContReview}.


\subsection*{Acknowledgments}
This research has been funded by the Australian Research Council (ARC DP220102216) and NSERC. 
The authors thank Alex Blumenthal and Paulo Varandas for bibliographic suggestions.
We thank the referee for a careful reading and many helpful suggestions.

\section{Preliminaries and Notation}

For a $d\times d$ matrix $M$, we write $\|M\|$ for its operator norm, that is $\|M\|=\sup_{\|x\|_2=1}\|Mx\|_2$ 
and $|M|_\infty$ for $\max|m_{ij}|$.
Let $(A_n)_{n\in\NN}$ be a fixed sequence of $d\times d$ matrices of (operator) norm at most 1 and
let $(\Xi_n)_{n\in\NN}$ be a family of independent identically distributed $d\times d$ 
matrix random variables with mutually independent entries,
each distributed as $U([-1,1])$. Let $A_{n,\ep} = A_n+\ep \Xi_n$ for $\ep>0$. 
Define $A^{(n)}:=A_nA_{n-1}\cdots A_1$ and (for $\ep>0$) 
$A_\ep^{(n)}:=A_{n,\ep}A_{n-1,\ep}\cdots A_{1,\ep}$.

For a $d\times d$ matrix $A$, let
$s_1(A)\geq \dots\geq s_d(A)$ be its singular values and for $1\leq j\leq k\leq d$ set $S_j^k(A)=s_j(A)\cdots s_k(A)$ be the product of the $j$th singular value through the $k$th singular value.  
For non-zero vectors $u$ and $v$ in $\RR^d$, we define $\angle(u,v)=\big\|u/\|u\|-v/\|v\|\big\|$. 
For subsets $C_1$ and $C_2$ of $\RR^d$, we define $d(C_1,C_2)=\inf_{x\in C_1,y\in C_2}\|x-y\|$. If the sets are disjoint,
one is compact and the other is closed, this quantity is positive and the infimum is attained. 
We denote the unit sphere in $\mathbb{R}^d$ by $S$.

\begin{lemma}\label{lem:projbound}
    Let $\RR^d$ be the expressed as a direct sum (not necessarily orthogonal) 
    $E\oplus F$ and let $\Pi_{E\parallel F}$ and $\Pi_{F\parallel E}$ be projections
    of $\RR^d$ onto $E$ and $F$ respectively so that for any $x\in\RR^d$, 
    $\Pi_{E\parallel F}(x)+\Pi_{F\parallel E}(x)$ is the unique decomposition of $x$
    in $E\oplus F$. Let $\delta=d(E\cap S,F\cap S)$ where
    $S$ is the unit sphere. 
    Then $\|\Pi_{E\parallel F}\|$ and $\|\Pi_{F\parallel E}\|$ are at most $2/\delta$.    
    \end{lemma}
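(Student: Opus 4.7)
The plan is to bound the norm of the $E$-projection (the $F$-projection will follow by a symmetric argument, since the roles of $E$ and $F$ in the hypothesis and conclusion are interchangeable). Fix a unit vector $x \in \RR^d$ and write $x = e + f$ with $e = \Pi_{E\parallel F}x$ and $f = \Pi_{F\parallel E}x$. The goal is to show $\|e\| \le 2/\delta$. If $e = 0$ the claim is trivial, and if $f = 0$ then $\|e\| = \|x\| = 1$, so it suffices to verify $\delta \le 2$ (which I do below). Hence I assume both $e$ and $f$ are nonzero and set $\hat e = e/\|e\|$, $\hat f = f/\|f\|$, so $\hat e \in E \cap S$ and $\hat f \in F \cap S$.

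The point is that $\pm\hat f \in F \cap S$, so the definition of $\delta$ gives
\begin{equation*}
    \|\hat e - \hat f\| \ge \delta \quad\text{and}\quad \|\hat e + \hat f\| \ge \delta,
\end{equation*}
which when squared yield $|\langle \hat e,\hat f\rangle| \le 1 - \delta^2/2$. Writing $x = \|e\|\bigl(\hat e + s\hat f\bigr)$ with $s = \|f\|/\|e\|$, I compute
\begin{equation*}
    1 = \|x\|^2 = \|e\|^2\bigl(1 + 2s\langle \hat e,\hat f\rangle + s^2\bigr) \ge \|e\|^2\bigl((1-s)^2 + s\delta^2\bigr).
\end{equation*}
It remains to minimise the quadratic $q(s) = (1-s)^2 + s\delta^2$ over $s \in \RR$ (or equivalently, choose the worst $s$). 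The minimum occurs at $s = 1 - \delta^2/2$ and equals $\delta^2(1 - \delta^2/4)$.

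To close the estimate I use that $\delta \le \sqrt 2$: applying the parallelogram identity to $\hat e, \hat f$ shows $\|\hat e - \hat f\|^2 + \|\hat e + \hat f\|^2 = 4$, so whichever sign of $\hat f$ is closer to $\hat e$ satisfies $\|\hat e \mp \hat f\| \le \sqrt 2$, hence $\delta \le \sqrt 2$. Therefore $1 - \delta^2/4 \ge 1/2$ and $\|e\| \le \sqrt 2/\delta \le 2/\delta$, as required. The only subtlety I anticipate is keeping track of the sign ambiguity in $\hat f$ when extracting both bounds $\|\hat e \pm \hat f\|\ge\delta$ from the definition of $\delta$; the rest is a routine quadratic optimisation. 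The resulting constant $\sqrt 2/\delta$ is in fact slightly better than the stated $2/\delta$, so the weaker bound in the statement leaves room to streamline even the optimisation step if desired.
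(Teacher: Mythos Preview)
Your argument is correct, but it follows a genuinely different line from the paper's. The paper writes $x=e-f$ with $\|x\|=1$ and uses nothing beyond the triangle inequality: from $\bigl\|e/\|e\|-f/\|e\|\bigr\|=1/\|e\|$ and $\bigl\|f/\|e\|-f/\|f\|\bigr\|=\bigl|\|f\|-\|e\|\bigr|/\|e\|\le 1/\|e\|$ one gets $\bigl\|e/\|e\|-f/\|f\|\bigr\|\le 2/\|e\|$, hence $\|e\|\le 2/\delta$. This is shorter, avoids any optimisation, and---as the paper explicitly notes---works verbatim in an arbitrary normed space. Your route, by contrast, expands $\|x\|^2$ via the inner product, bounds $|\langle\hat e,\hat f\rangle|$ using both $\|\hat e\pm\hat f\|\ge\delta$, and then minimises a quadratic in $s=\|f\|/\|e\|$; the parallelogram identity closes the argument. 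The payoff is the sharper constant $\sqrt2/\delta$, but the cost is that your proof is tied to the Euclidean structure and would not transfer to general normed spaces. Both proofs are valid for the lemma as stated; the paper's buys generality, yours buys the optimal constant.
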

\begin{proof}
    Let $x\in S$ and
write $x$ as $e-f$. We then have $\big\|e/\|e\|-f/\|e\|\big\|= 1/\|e\|$
and $\big\|f/\|e\|-f/\|f\|\big\|=\big|\|f\|/\|e\|-1\big|=\big|\|f\|-\|e\|\big|/\|e\|
\le \|f-e\|/\|e\|=1/\|e\|$. Hence by the triangle inequality we see 
$\big\|e/\|e\|-f/\|f\|\big\|\le 2/\|e\|$. Since $e=\Pi_{E\parallel F}(x)$,
this gives $\|\Pi_{E\parallel F}(x)\|\le 2/\big\|(e/\|e\|)-(f/\|f\|)\big\|\le 2/\delta$.
\end{proof}
We observe that this proof does not make use of the fact that $\RR^d$ is Euclidean,
so it applies in arbitrary normed spaces.



\begin{lemma}\label{lem:linspacebound}
Let $u\in S$ be a unit vector in $\RR^d$ and let $V$ be a subspace of $\RR^d$. 
Then $d(u,V\cap S)\le 2d(u,V)$. 
\end{lemma}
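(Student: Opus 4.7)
The plan is to take a nearest point $v\in V$ to $u$ and normalize it to produce a candidate in $V\cap S$. Since $V$ is finite-dimensional, and hence closed, the infimum $d(u,V)=\inf_{w\in V}\|u-w\|$ is attained at some $v\in V$. Assuming $v\neq 0$, I would set $v':=v/\|v\|\in V\cap S$ and bound $\|u-v'\|$ from above.

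The core estimate is a two-term triangle inequality:
$$\|u-v'\|\le \|u-v\|+\|v-v'\|.$$
The first summand equals $d(u,V)$ by the choice of $v$. For the second,
$\big\|v-v/\|v\|\big\|=\big|\|v\|-1\big|$; then using $\|u\|=1$ together with the reverse triangle inequality,
$\big|\|v\|-1\big|=\big|\|v\|-\|u\|\big|\le\|u-v\|=d(u,V)$. Combining these two inputs gives $d(u,V\cap S)\le\|u-v'\|\le 2\,d(u,V)$.

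The only degenerate case is $v=0$, i.e., the origin is a closest point of $V$ to $u$; then $d(u,V)=\|u\|=1$, and for any $v'\in V\cap S$ (which exists as long as $V$ is nontrivial, which is implicitly required for the statement to make sense) the crude bound $\|u-v'\|\le\|u\|+\|v'\|=2=2\,d(u,V)$ suffices. I anticipate no real obstacle: the argument is essentially a one-line reverse-triangle-inequality computation once the candidate $v/\|v\|$ is identified. As with Lemma~\ref{lem:projbound}, nothing in the argument uses the Euclidean structure beyond the triangle inequalities, so the same proof works verbatim in any normed space.
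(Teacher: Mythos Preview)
Your proof is correct and essentially identical to the paper's: both pick a nearest point $v\in V$, normalize it, and use $\|u-v/\|v\|\|\le\|u-v\|+\big|\|v\|-1\big|\le 2\|u-v\|$ via the reverse triangle inequality, handling the degenerate case $d(u,V)=1$ separately. The paper likewise remarks that the argument works in any normed space.
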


\begin{proof}
    If $d(u,V)=1$, the conclusion follows from the triangle inequality. 
    Otherwise, let $v\in V\setminus\{0\}$ be such that $d(u,V)=\|u-v\|$. Then
    $d(u,V\cap S)\le \big\|u-v/\|v\|\big|\le \|u-v\|+\big\|v-v/\|v\|\big\|
    =\|u-v\|+\big|\|v\|-1\big|=\|u-v\|+\big|\|v\|-\|u\|\big|\le 2\|u-v\|
    =2d(u,V)$.    
\end{proof}

This proof also works in normed spaces. In a Euclidean space, the constant
can be improved from $2$ to $\sqrt 2$. 
\section{General Strategy}\label{S: gen strategy}

In this section we present the general strategy for proving Theorems \ref{mt:2x2} to \ref{mt:ort+unif}. 
We begin with the following definition: 
Given a $d\times d$ matrix $A$ and $1\leq j< k\leq d$, we say that $A$ has a 
\textsl{$\gap$-large $(j,k)$-gap} (or \textsl{$(j,k)$-gap} for short)
if $\frac{s_j(A)}{s_k(A)}\geq \reqgap$, where $\reqgap$ is a constant to be determined depending on $\ep$. 

The main idea of the proof is to break the sequence of matrices $(A_n)$ into blocks of length 
$\blocklength$, $B_i:=A_{(i+1)N}\cdots A_{iN+1}$, $i\geq 0$, where $\blocklength$ is a constant 
to be determined based on $\ep$.
We then show for each such block $B_i$, how to construct a \emph{target block} $\target{B}_i$ 
where $\target{B}_i$ is of the form $\target{A}_{(i+1)N}\cdots \target{A}_{iN+1}$ where 
$|\target{A}_l-A_l|_\infty$ is small for each $l$ and the target blocks $\target{B}_i$
have a $(j,k)$-gap. 

First, in \S\ref{S:nonsing} we present a procedure for perturbing a matrix to make it 
sufficiently non-singular. The full details of the target block construction will be 
given in \S\ref{S: find target}. In 
\S\ref{S: triangle ineq}, for a given target block $\target{B}_i$ with a $(j,k)$-gap, we show that any 
block $\tarpert{B}_i$ sufficiently close to $\target{B}_i$ must also have a $(j,k)$-gap. 
Then in \S\ref{S: gluing},  
we show that, when two blocks of any length where the random perturbations have been fixed
are separated by a single matrix where the random perturbation is not yet determined, 
the logarithmic $(j,k)$ gap of the combined block is the sum of logarithmic gaps of the outside 
two blocks plus a random variable with a tight distribution. The gap size $\gap$ is chosen to ensure that
the gains from hitting targets dominate the losses from this joining procedure. 
In \S\ref{S: Kolmogorov}, 
we use the Kolmogorov 0--1 law to show that the growth rate of the ratio between the $j$th and $k$th singular values of partial products is constant almost surely. 
In \S\ref{S: bkkpng}, we present a book-keeping procedure, Theorem~\ref{thm:meta}, relying on the gluing lemma (Lemma \ref{gluing lemma}), 
which provides a quantitative lower bound on the exponential growth rate of 
$s_j(A^{(n)}_\epsilon)/s_k(A^{(n)}_\epsilon)$ based on the frequency of hitting 
neighbourhoods of the target blocks. 

\subsection{Non-Singular Initialization}\label{S:nonsing}
Our target blocks in Lemma \ref{lem: tri ineq} will be products of non-singular matrices.
The following lemma provides a simple way to construct non-singular matrices near possibly singular matrices. 
\begin{lemma}\label{lem:nonsinginit}
Let $d\geq2$. Let $A$ be a $d\times d$ matrix with $\|A\|\leq 1$ and $0<\ep<1$. 
Then, there exists  $A'$ such that $\|A'-A\|<\frac\ep2$, $\|A'\|\le 
\max(\frac 12,1-\frac\ep2)$ 
and $s_d(A')\geq \frac\ep2$. 
\end{lemma}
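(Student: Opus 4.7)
The plan is to construct $A'$ through the singular value decomposition of $A$, modifying only the diagonal factor and leaving the orthogonal factors untouched. Writing $A = U\Sigma V^{T}$ with $\Sigma = \mathrm{diag}(s_{1},\ldots,s_{d})$ and $1 \geq s_{1}\geq\cdots\geq s_{d}\geq 0$, any choice of diagonal $\Sigma' = \mathrm{diag}(s_{1}',\ldots,s_{d}')$ with non-negative entries gives $A':=U\Sigma' V^{T}$ whose singular values are exactly the $s_{i}'$; since $U,V$ are orthogonal, $\|A'\| = s_{1}'$ and $\|A'-A\| = \|\Sigma'-\Sigma\| = \max_{i}|s_{i}'-s_{i}|$. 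This reduces the problem to a one-dimensional clamping problem: choose each $s_i'$ in $[\tfrac{\ep}{2}, 1-\tfrac{\ep}{2}]$ within distance $\tfrac{\ep}{2}$ of $s_i$.

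First I would take the natural clamping
\[
s_{i}' := \max\!\bigl(\min(s_{i},\, 1-\tfrac{\ep}{2}),\, \tfrac{\ep}{2}\bigr),
\]
and then verify $|s_{i}'-s_{i}|\leq \tfrac{\ep}{2}$ by a three-case analysis on whether $s_{i}<\tfrac{\ep}{2}$, $\tfrac{\ep}{2}\leq s_{i}\leq 1-\tfrac{\ep}{2}$, or $s_{i}>1-\tfrac{\ep}{2}$. The three conclusions of the lemma then follow directly from the definition: $s_{d}(A') = s_{d}' \geq \tfrac{\ep}{2}$; $\|A'\| = s_{1}'\leq 1-\tfrac{\ep}{2} = \max(\tfrac{1}{2},1-\tfrac{\ep}{2})$; and $\|A'-A\|\leq \tfrac{\ep}{2}$.

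No substantive obstacle arises, since the construction is direct once the SVD reduction is in place. The only delicate point is that the clamping saturates $|s_i'-s_i|=\tfrac{\ep}{2}$ exactly when $s_i\in\{0,1\}$; by Weyl's inequality, the strict inequality $\|A'-A\|<\tfrac{\ep}{2}$ is genuinely obstructed when $s_d(A)=0$ or $\|A\|=1$, so the intended reading of the lemma is presumably $\|A'-A\|\leq \tfrac{\ep}{2}$, which the construction yields without extra work. (The $\max(\tfrac{1}{2},\cdot)$ in the norm bound is redundant for $\ep\in(0,1)$, since $1-\tfrac{\ep}{2}\geq \tfrac{1}{2}$ in that range.)
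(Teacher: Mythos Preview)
Your approach is essentially the same as the paper's: both proofs pass to the SVD and replace the diagonal of singular values, the only difference being that the paper uses the convex combination $s_i' = (1-\ep)s_i + \tfrac{\ep}{2}$ rather than your clamping $s_i' = \max(\min(s_i,1-\tfrac{\ep}{2}),\tfrac{\ep}{2})$; both choices land each $s_i'$ in $[\tfrac{\ep}{2},1-\tfrac{\ep}{2}]$ with $|s_i'-s_i|\le\tfrac{\ep}{2}$. Your observation about the strict inequality is correct and applies equally to the paper's own argument, which also only yields $\|A'-A\|\le\tfrac{\ep}{2}$ (and indeed the lemma is only ever used with the non-strict bound in the rest of the paper).
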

\begin{proof}

Let $A=VDU^T$ be a singular value decomposition of $A$.
Then, $A' = V \big((1-\ep)D+\ep(\frac12)\big) U^T$ satisfies the required property.
Indeed, if $s_1, \dots, s_d$ are the singular values of $A$, then the singular values of $A'$ are 
$(1-\ep)s_1+\ep\frac 12, \dots ,(1-\ep)s_d+\ep\frac12$.
Also, $\|A'-A\|\le\frac\ep2$.
\end{proof}
In fact by \cite[Lemma 1]{Raghunathan}, the singular value decomposition may be done in a measurable way, so that $U$, $D$ and $V$
may be chosen to depend measurably on $A$. Hence, $A'$ may be taken to be a measurable function of $A$.

\subsection{Triangle Inequality for Target Block}\label{S: triangle ineq}

The following continuity lemma shows that under arbitrary suitably small
perturbations, the singular value ratios for subproducts of matrices of length less 
than $N$ cannot decrease by more than half. 
\begin{lemma}\label{lem: tri ineq}
Let $d>1$, $1\le m\le m'\le N-1$ and $0<\ep<1$.
Suppose $\target{A_m}, \dots, \target{A_{m'}}$, called targets,  are $d\times d$ 
matrices of norm at most $1-\frac\ep4$ such that $s_d(\target{A_i})\geq \frac\ep4$ 
for every $m\leq i\leq m'$.
For each $m\leq i\leq m'$,
let $\tarpert{A}_i$ be such that $|\tarpert{A}_i - \target{A_i}|_\infty< \tinygap$. 
Then, for every $1\leq j<k\leq d$ we have
\begin{equation}\label{eq:svPert}
\frac{s_j(\tarpert{A_{m'}} \dots \tarpert{A}_m)}{s_{k}(\tarpert{A_{m'}} \dots \tarpert{A}_m)}\geq \frac12\,
\frac{s_j(\target{A_{m'}} \dots \target{A}_m)}{s_{k}(\target{A_{m'}} \dots \target{A}_m)}.
\end{equation}
\end{lemma}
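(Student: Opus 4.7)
The plan is to treat this as a standard operator-norm perturbation problem, apply Weyl's inequality for singular values, and then use the lower bound $s_d(\target{A}_i)\ge\ep/4$ to upgrade an additive singular-value perturbation into a multiplicative bound on the ratio. The factor $1/(3dN)$ in the hypothesis $|\tarpert{A}_i-\target{A}_i|_\infty<\tinygap$ appears to have been chosen precisely to make the constants balance.

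First I would convert the coordinatewise hypothesis into an operator-norm bound. For any $d\times d$ matrix $M$ one has $\|M\|\le d\,|M|_\infty$ (bounding via the Frobenius norm), so the hypothesis gives $\|\tarpert{A}_i-\target{A}_i\|<(\ep/4)^N/(3N)$. Since $\|\target{A}_i\|\le 1-\ep/4$ and the perturbation is strictly smaller than $\ep/4$, we also obtain $\|\tarpert{A}_i\|\le 1$. Writing $\target{B}=\target{A}_{m'}\cdots\target{A}_m$ and $\tarpert{B}=\tarpert{A}_{m'}\cdots\tarpert{A}_m$, the standard telescoping identity
\begin{equation*}
\tarpert{B}-\target{B}\;=\;\sum_{i=m}^{m'}\tarpert{A}_{m'}\cdots\tarpert{A}_{i+1}\,(\tarpert{A}_i-\target{A}_i)\,\target{A}_{i-1}\cdots\target{A}_m,
\end{equation*}
combined with the fact that every surrounding factor has operator norm at most $1$, yields $\|\tarpert{B}-\target{B}\|\le (m'-m+1)\cdot(\ep/4)^N/(3N)\le(\ep/4)^N/3$, since $m'-m+1\le N-1$.

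Next, Weyl's inequality $|s_l(X+E)-s_l(X)|\le \|E\|$ gives $|s_l(\tarpert{B})-s_l(\target{B})|\le(\ep/4)^N/3$ for every $l$. The hypothesis $s_d(\target{A}_i)\ge\ep/4$ delivers the crucial lower bound $s_l(\target{B})\ge s_d(\target{B})\ge\prod_{i=m}^{m'}s_d(\target{A}_i)\ge(\ep/4)^{N-1}$ for each $l$. Setting $\alpha=s_j(\target{B})$, $\beta=s_k(\target{B})$, and $\delta=(\ep/4)^N/3$, the target inequality \eqref{eq:svPert} reads $(\alpha-\delta)/(\beta+\delta)\ge\alpha/(2\beta)$, which after clearing denominators becomes $\alpha\beta\ge(2\beta+\alpha)\delta$. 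Since $\alpha\ge\beta$, the right-hand side is at most $3\alpha\delta$, so it suffices to show $\beta\ge 3\delta=(\ep/4)^N$; this follows at once from $\beta\ge(\ep/4)^{N-1}$ combined with $\ep<1$.

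There is no real conceptual obstacle; the work is purely bookkeeping. The constant $1/(3dN)$ in the hypothesis is tuned exactly to absorb (i) the factor $d$ in converting $|\cdot|_\infty$ to operator norm, (ii) the at-most $N$ telescoping terms in the product expansion, and (iii) the factor $3$ needed to pass from the additive Weyl bound to the multiplicative bound with prefactor $1/2$.
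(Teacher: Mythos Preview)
Your proof is correct and follows essentially the same route as the paper: convert the $|\cdot|_\infty$ bound to an operator-norm bound, telescope to bound $\|\tarpert{B}-\target{B}\|$ by $(\ep/4)^N/3$, apply Weyl's inequality, and then use $s_k(\target{B})\ge(\ep/4)^{N}$ to turn the additive perturbation into the multiplicative factor $\tfrac12$. The only cosmetic difference is in the final algebra---the paper factors out $s_j(\target{B})/s_k(\target{B})$ and bounds the remaining fraction by $(1-\tfrac13)/(1+\tfrac13)=\tfrac12$, whereas you clear denominators and use $\alpha\ge\beta$---but the substance is identical.
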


\begin{proof}
Note that $\|\tarpert{A_i}-\target{A_i}\|\le d|\tarpert{A_i}-\target{A_i}|_\infty<(\frac\ep 4)^N/(3N)$.
Since $\|\tarpert {A_i}\|\le \|\target{A_i}\|+\|\tarpert{A_i}-\target{A_i}\|$,
we have $\|\tarpert{A_i}\|$ and $\|\target{A_i}\|$ are at most 1 for each $m\le i\le m'$. 
Thus, the triangle inequality implies
\begin{align*}
\| \target{A_{m'}}\cdots\target{A_{m}}-\tarpert{A_{m'}}\cdots\tarpert{A_{m}}\| &\leq
\sum_{i=m}^{m'} \| \target{A_{m'}}\cdots \target{A_{i}} \tarpert{A}_{i-1} \dots \tarpert{A_m} -
\target{A_{m'}}\cdots \target{A_{i+1}} \tarpert{A}_i \dots \tarpert{A}_m\| \\
&=
\sum_{i=m}^{m'} \|\target{A_{m'}}\cdots\target{A_{i+1}}(\target{A_i}-\tarpert{A}_i)\tarpert{A_{i-1}}\dots\tarpert{A_m}\|\\
&\leq 
\sum_{i=m}^{m'} \|  \target{A_{i}}  -
\tarpert{A}_i \| \leq (\tfrac{\ep}4)^{N}/3.
\end{align*}
Using the well-known fact\footnote{This follows from the so-called max-min inequality:
$s_j(A)=\max_{\dim V=j}\min_{v\in V\cap S} \|(A-A'+A') v\| \leq \|A-A'\|+ \max_{\dim V=j}
\min_{v\in V\cap S} \|A'v\|=\|A-A'\|+s_j(A')$.} that $|s_j(A)-s_j(A')|\le \|A-A'\|$ for all $j$, we see
\begin{align*}
\frac{s_j(\tarpert{A_{m'}} \dots \tarpert{A}_m)}{s_{k}(\tarpert{A_{m'}} \dots \tarpert{A}_m)}
&\ge \frac{s_j(\target{A_{m'}} \dots \target{A}_m)- (\tfrac{\ep}4)^{N}/3}{s_{k}(\target{A_{m'}} \dots \target{A}_m)+
(\tfrac{\ep}4)^{N}/3}\\
&=\frac{s_j(\target{A_{m'}} \dots \target{A}_m)}{s_{k}(\target{A_{m'}} \dots \target{A}_m)}
\left(\frac{1-(\tfrac{\ep}4)^{N}/(3s_j(\target{A_{m'}} \dots \target{A}_m))}
{1+(\tfrac{\ep}4)^{N}/(3s_k(\target{A_{m'}} \dots \target{A}_m))}\right).
\end{align*}
Since $s_j(\target{A_{m'}} \dots \target{A}_m)$ and $s_k(\target{A_{m'}} \dots \target{A}_m)$
are at least $s_d(\target{A_{m'}})\cdots s_d(\target{A_m})\ge (\frac\ep 4)^N$, 
we obtain
$$
\frac{s_j(\tarpert{A_{m'}} \dots \tarpert{A_m})}{s_{k}(\tarpert{A_{m'}} \dots \tarpert{A}_m)}
\ge 
\frac{s_j(\target{A_{m'}} \dots \target{A}_m)}{s_{k}(\target{A_{m'}} \dots \target{A}_m)}
\left(\frac{1- \tfrac 13}
{1+\tfrac 13}\right),
$$
so that the desired inequality holds. 
\end{proof}

We denote by $\PP$ the i.i.d.\ probability measure on sequences of matrices with independent
uniformly distributed entries in $[-1,1]$.
The probability that the perturbed matrices are close enough along a sampled block to 
satisfy the hypothesis of Lemma \ref{lem: tri ineq} has a straightforward lower bound.
\begin{corollary}\label{cor pos measure}
Let $d>1$. For all $0<\ep<1$ there exists $p\in (0,1)$  (for example $p=(\tfrac \ep4)^{N^2d^2}/(3dN)^{Nd^2}$ works)
such that for any target,
$\target{A_m}, \dots, \target{A_{m'}}$ as in Lemma~\ref{lem: tri ineq} with $1\le m\le m'\le N-1$,
$$
\mathbb P\big(|A_{i,\ep}-\target{A_i}|_\infty<\tinygap \text{ for all $i=m,\ldots,m'$})\big)\ge p.
$$
%
\end{corollary}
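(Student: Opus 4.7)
The plan is to exploit the product structure of the event. The condition $|A_{i,\ep}-\target{A_i}|_\infty<\tinygap$ is a conjunction over the $d^2$ matrix entries, and it is further conjoined over the at most $N-1$ indices $i\in\{m,\ldots,m'\}$. Since the matrices $\Xi_i$ are mutually independent and each $\Xi_i$ has mutually independent entries, the full event is an intersection of at most $Nd^2$ independent events, so its probability factors as a product of per-entry probabilities.

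The next step is to compute a single per-entry probability. Fix $i$ and an entry $(j,k)$. Writing $A_{i,\ep}=A_i+\ep\Xi_i$, the requirement $|(A_{i,\ep})_{jk}-(\target{A_i})_{jk}|<\tinygap$ translates, after dividing by $\ep$, to the condition that the uniformly distributed variable $(\Xi_i)_{jk}$ on $[-1,1]$ (density $1/2$) lie in an interval of length $2\tinygap/\ep$ centered at $((\target{A_i})_{jk}-(A_i)_{jk})/\ep$. Provided this interval is contained in $[-1,1]$, the per-entry probability is exactly $\tinygap/\ep$.

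Finally I would combine the estimates. The containment of the interval in $[-1,1]$ follows from the fact that the targets are constructed to be close to the original matrices: for targets produced by Lemma~\ref{lem:nonsinginit} one has $\|\target{A_i}-A_i\|<\ep/2$, so the center lies in $[-1/2,1/2]$ while $\tinygap/\ep$ is negligible by comparison. Multiplying the per-entry lower bounds over all at most $Nd^2$ independent entries, and using $\ep<1$ to absorb the factor $\ep^{-Nd^2}\geq 1$, I obtain
\[
\mathbb P\big(|A_{i,\ep}-\target{A_i}|_\infty<\tinygap\text{ for all }i=m,\ldots,m'\big)\;\ge\;(\tinygap/\ep)^{Nd^2}\;\ge\;\tinygap^{Nd^2}\;=\;\frac{(\ep/4)^{N^2d^2}}{(3dN)^{Nd^2}},
\]
which is the stated lower bound.

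There is no real obstacle here: the argument is essentially a routine density computation together with a clean use of independence. The only point demanding any care is confirming that the target-centered interval fits inside the support $[-1,1]$, and this is guaranteed (with considerable margin) by the closeness of the target to the original matrix.
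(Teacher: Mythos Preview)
Your argument is correct and is precisely the routine computation the paper leaves implicit (no proof is given for this corollary). One small inaccuracy: the targets are not produced by Lemma~\ref{lem:nonsinginit} alone---that lemma yields the intermediate matrices $A_i'$ with $\|A_i'-A_i\|<\tfrac\ep2$, and the targets $\target{A_i}$ are subsequent near-identity perturbations of the $A_i'$, so the relevant bound is $\|\target{A_i}-A_i\|<\tfrac{3\ep}4$ (this is spelled out in the proof of Theorem~\ref{thm:meta}). The center therefore lies in $(-\tfrac34,\tfrac34)$ rather than $[-\tfrac12,\tfrac12]$, but since $\tinygap/\ep<\tfrac14$ the target interval still sits inside $[-1,1]$ and your conclusion stands.
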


\subsection{Gluing Lemma}\label{S: gluing}


For $1\leq j<k\leq d$ and a $d\times d$ matrix $M$, let $q_{j,k}(M)=\frac{s_j(M)}{s_k(M)}$. 
We define a function $F$ whose arguments are $d\times d$ matrices by
$F(L,A,R)=\log q_{j,k}(LAR)-\log q_{j,k}(L)-\log q_{j,k}(R)$. 
This function is used to measure
how close to multiplicative $q_{j,k}$ is. 

\begin{lemma}\label{gluing lemma}
Let $d>1$ and let $1\le j<k\le d$. 
Then there exists $\zt>0$ such that for all invertible $d\times d$ matrices $L$ and $R$ and
for all $A$ such that $\|A\|\le 1$, all $X\geq 0$ and all $0<\ep<1$, we have   
\begin{equation*}
    \PP(|F(L,A+\ep\Xi,R)|>X)\leq \min\{1,\zt e^{-\frac{X}{4d}}\ep^{-1}\},
\end{equation*}
where $\Xi$ is a matrix-valued random variable with independent $U[-1,1]$ entries.
\end{lemma}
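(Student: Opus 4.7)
The plan is to reduce to a diagonal setting via singular value decompositions of $L$ and $R$, and then combine a deterministic bound for $F$ with a small-ball probability estimate for the smallest singular value of the randomised matrix.

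First I would diagonalise: writing $L = U_L \Sigma_L V_L^T$ and $R = U_R \Sigma_R V_R^T$, orthogonal invariance of singular values gives $q_{j,k}(LAR) = q_{j,k}(\Sigma_L M \Sigma_R)$ with $M := V_L^T (A+\ep\Xi) U_R$, while $q_{j,k}(L) = q_{j,k}(\Sigma_L)$ and $q_{j,k}(R) = q_{j,k}(\Sigma_R)$. Since $\mathrm{vec}(V_L^T \Xi U_R) = (U_R^T \otimes V_L^T)\,\mathrm{vec}(\Xi)$ and the Kronecker product of orthogonal matrices is orthogonal, $M$ is absolutely continuous with density $(2\ep)^{-d^2}$ supported on a rotated, translated cube of side $2\ep$ in $\RR^{d^2}$.

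Second, I would establish a deterministic inequality of the form
$$|F(\Sigma_L, M, \Sigma_R)| \leq 4d \log q_{1,d}(M),$$
by iterating Horn's inequalities $s_{i+j-1}(AB) \leq s_i(A) s_j(B)$ and their reverses $s_{i+j-d}(AB) \geq s_i(A) s_j(B)$ on each of $s_j(\Sigma_L M \Sigma_R)$ and $s_k(\Sigma_L M \Sigma_R)$, choosing indices so that the contributions of $\Sigma_L$ and $\Sigma_R$ telescope into $\log q_{j,k}(\Sigma_L) + \log q_{j,k}(\Sigma_R)$ and only a power of $q_{1,d}(M)$ remains.

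Third, I would apply a small-ball probability estimate for $s_d(M)$. Using that $\nabla_M s_d(M) = u_d v_d^T$ has Frobenius norm $1$ at smooth points and that $\{s_d=0\}$ is a codimension-one subvariety of $\RR^{d\times d}$, a direct argument (for instance via the coarea formula) yields $\PP(s_d(M) < t) \leq C_d\, t/\ep$ for some constant $C_d$ depending only on $d$. Since $\|M\| \leq \|A\| + \ep\|\Xi\| \leq d+1$, we have $q_{1,d}(M) \leq (d+1)/s_d(M)$, and so $\PP(q_{1,d}(M) > e^{X/(4d)}) \leq C_d(d+1) e^{-X/(4d)}/\ep$. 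Combining with the deterministic bound from the previous step completes the proof.

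The main obstacle is the deterministic step. The Horn inequalities applied crudely introduce extraneous factors depending on the full spread of $\Sigma_L$ and $\Sigma_R$ (for example, $s_1(\Sigma_L)/s_j(\Sigma_L)$), which can be arbitrarily large. Achieving a bound with constant only $4d$, uniform over all invertible $L$ and $R$, seems to require either a finer analysis clustering the singular values of $\Sigma_L$ and $\Sigma_R$ by magnitude and treating the corresponding block structure of $M$ separately, or an alternate approach via exterior powers in which the numerator and denominator of $q_{j,k}(\Sigma_L M \Sigma_R)/(q_{j,k}(\Sigma_L) q_{j,k}(\Sigma_R))$ reduce to minors of $M$ whose size is controlled directly by the small-ball estimate.
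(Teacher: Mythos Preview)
Your Step 2 fails outright: the deterministic inequality $|F(\Sigma_L,M,\Sigma_R)|\le 4d\log q_{1,d}(M)$ is false, not merely hard to prove. Take $d=2$, $j=1$, $k=2$, $\Sigma_L=\Sigma_R=\mathrm{diag}(1,\delta)$ and $M=\begin{pmatrix}0&1\\1&0\end{pmatrix}$. Then $\Sigma_L M\Sigma_R=\begin{pmatrix}0&\delta\\\delta&0\end{pmatrix}$ has $q_{1,2}=1$, while $q_{1,2}(\Sigma_L)=q_{1,2}(\Sigma_R)=1/\delta$, so $|F|=2|\log\delta|$; yet $M$ is orthogonal, so $q_{1,d}(M)=1$ and your bound is $0$. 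The condition number of $M$ simply does not control $F$ uniformly in $\Sigma_L,\Sigma_R$, and no amount of Horn-inequality bookkeeping will fix this. Your own final sentence points in the right direction, but you stop short of carrying it out.

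The paper does exactly the minor/exterior-power route you allude to. Write $\log q_{j,k}=\log S_1^j-\log S_1^{j-1}+\log S_1^{k-1}-\log S_1^k$ with $S_1^\ell=s_1\cdots s_\ell$, and bound each $\log S_1^\ell(\Sigma_L M\Sigma_R)-\log S_1^\ell(\Sigma_L)-\log S_1^\ell(\Sigma_R)$ from both sides. The upper bound is submultiplicativity of $S_1^\ell$. The lower bound is the crucial step: with $C_\ell$ the projection onto the first $\ell$ coordinates, $S_1^\ell(\Sigma_L M\Sigma_R)\ge S_1^\ell(C_\ell\Sigma_L M\Sigma_R C_\ell)$, and because $\Sigma_L,\Sigma_R$ are diagonal with decreasing entries one has $C_\ell\Sigma_L=C_\ell\Sigma_L C_\ell$, so on the top-left $\ell\times\ell$ block $S_1^\ell$ becomes $|\det(\cdot)|$ and is multiplicative. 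This yields $S_1^\ell(\Sigma_L M\Sigma_R)\ge S_1^\ell(\Sigma_L)\,S_1^\ell(\Sigma_R)\,|\det M_{[\ell]\times[\ell]}|$. Hence $|F|$ is bounded by a constant plus the sum of $|\log|\det M_{[\ell]\times[\ell]}||$ over $\ell\in\{j-1,j,k-1,k\}$, and one finishes with a small-ball estimate for each of these four determinants (not for $s_d(M)$). In the counterexample above the relevant minor is $M_{11}=0$, a measure-zero event under the perturbation, which is why the argument is inherently probabilistic rather than deterministic.
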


\begin{proof}
    Recall that $S_1^\ell(A)=s_1(A)\cdots s_\ell(A)$ for each $1\leq \ell\leq d$, and note that 
    \begin{equation}\label{q_j equality}
        \log q_{j,k}(A)=\log S_1^j(A)-\log S_1^{j-1}(A)+\log S_1^{k-1}(A)-\log S_1^k(A).     
    \end{equation}
    Using the fact that $\|A\|\leq 1$, and the fact that the entries of $\Xi$ are all in $[-1,1]$, we first note that 
    \begin{equation}\label{chi_j up bd}
        \log S_1^\ell(L(A+\ep\Xi)R)
        \leq 
        \log S_1^\ell(L)+\log S_1^\ell(R)
        +d\log(1+\ep d).
    \end{equation}
    
    Following the proof of Lemma 3.5 in \cite{FGTQ} we write $L=O_1D_1O_2$ where $O_1$ and $O_2$ are orthogonal and $D_1$ is diagonal with non-negative entries arranged in decreasing order. Similarly we write $R=O_3D_2O_4$, where $O_3$ and $O_4$ are orthogonal and $D_2$ is diagonal. Let $A'=O_2AO_3$ and $\Xi'=(1/d)O_2\Xi O_3$ and $C_\ell$ be the diagonal matrix with $1$'s in the first $\ell$ diagonal positions and $0$'s elsewhere. 
    (The choice of normalization of $\Xi'$ is to ensure that its entries remain in $[-1,1]$).
    In this case we have 
    \begin{align*}
        &S_1^\ell(L(A+\ep\Xi)R)=S_1^\ell(D_1(A'+\ep d\,\Xi')D_2),\\
        \quad
        &S_1^\ell(L)=S_1^\ell(D_1),
        \quad\text{ and }\quad
        S_1^\ell(R)=S_1^\ell(D_2).
    \end{align*}
We briefly summarize the next steps of the proof of Lemma 3.5 in \cite{FGTQ}.
Using the facts that $S_1^\ell(OM)=S_1^\ell(M)=S_1^\ell(MO)$ for an orthogonal matrix $O$
and $S_1^\ell(C_\ell M),S_1^\ell(MC_\ell)\le S_1^\ell(M)$, we have
$S_1^\ell(L(A+\ep\Xi) R)=S_1^\ell(O_1D_1O_2(A+\ep\Xi)O_3D_2O_4)
=S_1^\ell(D_1O_2(A+\ep\Xi)O_3D_2)\ge 
S_1^\ell(C_\ell D_1O_2(A+\ep\Xi)O_3D_2C_\ell)$. 
Now since $C_\ell D_1=C_\ell D_1C_\ell$ and since $S_1^\ell$ is multiplicative
for matrices whose non-zero entries are in the top left $\ell\times\ell$ submatrix, we 
obtain
    \begin{equation} 
    \label{chi_j low bd}
        \log S_1^\ell(L(A+\ep\Xi)R)
        \geq 
        \log S_1^\ell(L)+\log S_1^\ell(R)
        +\log S_1^\ell(C_\ell(A'+\ep d\,\Xi')C_\ell).
    \end{equation}
Combining \eqref{chi_j up bd} and \eqref{chi_j low bd}, we deduce for each $\ell\le d$ we have
\begin{equation}\label{chi_j abs}
\Big|\log S_1^\ell(L(A+\ep\Xi)R)-\log S_1^\ell(L)-\log S_1^\ell(R)\Big|
\le d\log(1+\ep d)+|\log S_1^\ell(C_\ell(A'+\ep d\,\Xi')C_\ell)|.
\end{equation}
    
For each $1\leq \ell\leq d$ let $A_\ell''$ and $\Xi_\ell''$ be the top-left $\ell\times \ell$ submatrices of $A'$ and $\Xi'$ respectively and note that $S_1^\ell(C_\ell(A'+\ep d\,\Xi')C_\ell)=|\det (A_\ell''+\ep d\,\Xi_\ell'')|$.

Hence \eqref{q_j equality} gives
\begin{equation}
    |F(L,A+\ep\Xi,R)|\le 4d\log(1+\ep d)+\sum_{\ell\in\{j-1,j,k-1,k\}}\Big|\log |\det (A_\ell''+\ep d\,\Xi_\ell'')|\Big|.
\end{equation}

In order to have $|F(L,A+\ep\Xi,R)|>X$, it is therefore necessary that one of the
$\big|\log|\det(A_\ell''+\ep d\,\Xi_\ell'')|\big|$ terms is greater that $\frac X4-d\log(1+\ep d)$.

Since $A_\ell''+\ep d\,\Xi_\ell''=C_\ell O_2(A+\ep\Xi)O_3C_\ell$,
$\|A_l''+\ep d\,\Xi_\ell''\|\le \|A+\ep\Xi\|\le 1+\ep d$ for each 
$\ell\le d$, so that $\log|\det(A_\ell''+\ep d\,\Xi_\ell'')|
\le d\log(1+\ep d)$ for all $\Xi$ and all $\ell\le d$. 
We have shown in the case $X>8d\log(1+\ep d)$, that it is impossible for $\log|\det(A_\ell''+\ep d\,\Xi_\ell'')|$
to exceed $\frac X4-d\log(1+\ep d)$.
Hence in order to have
$|F(L,A+\ep\Xi,R)|>X$, it is necessary that one of the 
$\log|\det(A_\ell''+\ep d\,\Xi_\ell'')|$ terms is \emph{less than} $-\big(\frac X4-d\log(1+\ep d)\big)$.

That is for $X>8d\log(1+\ep d)$, 
\begin{equation}\label{eq:splitprob}
    \begin{split}
    &\PP(|F(L,A+\ep\Xi,R)|>X) \\
    &\le 4\max_{\ell\in\{j-1,j,k-1,k\}}
    \PP\Big(\log|\det(A_\ell''+\ep d\,\Xi_\ell'')|<-\big(\tfrac X4-d\log(1+\ep d)\big)\Big).
    \end{split}
\end{equation}

The distribution of $\Xi_\ell''$ is absolutely continuous with respect to the uniform measure on $\ell\times \ell$ matrices
with entries in $[-1,1]$, 
and has a bounded density, where the bound only depends on $d$.
Indeed, the linear map $T: \mathbb R^{d^2} \to \mathbb R^{d^2}$ given by $T(\Xi)=O_2 \Xi O_3$ is an isometry in the Frobenius norm, so the density of 
$\Xi'$ (thought of as a vector in $[-1,1]^{d^2}\subset \mathbb R^{d^2}$) is uniform on its image.
Thus, by taking marginals,  the density of the top-left $\ell\times \ell$ submatrix of $\Xi'$ is scaled by at most a factor of  $2^{d^2-\ell^2}$ in each coordinate.


Now, from the proof of Lemma 3.5 of \cite{FGTQ}, there
exists $C>0$ such that for all $\ell\le d$, for every $\ell\times \ell$ matrix $B$ of 
$\|\cdot\|$-norm at most 1, and for all $K>0$,
\begin{equation*}
    \PP(\{Z_\ell\colon \log|\det(B+\ep Z_\ell)|<-K\})\leq \min\{1, Ce^{-K/d}\ep^{-1}\},
\end{equation*}
where the random variable $Z_\ell$ is an $\ell\times\ell$ matrix random variable with 
independent entries uniformly distributed in $[-1,1]$.

Substituting the terms of \eqref{eq:splitprob} into this, replacing $\ep$ with $\ep d$ and absorbing the factors 4, $d$ and the upper bound for
the density into $C$, we obtain
$$
\PP(|F(L,A+\ep\Xi,R)|>X)\le Ce^{-(X/4-d\log(1+\ep d))/d}\ep^{-1}.
$$
which is of the required form.
\end{proof}

\begin{corollary}\label{cor:SLLN applies}
For $\lambda>0$, let $\Exp(\lambda)$ denote an exponentially distributed random variable with parameter $\lambda$.
    In the setting of Lemma \ref{gluing lemma}, the random variables
    $|F(L,A+\ep\Xi,R)|$ are uniformly stochastically dominated by the 
    random variable $4d\log(\frac\zeta\ep)+\Exp((4d)^{-1})$ which has exponentially
    decaying tails.

    In particular, there is a constant $K$ (depending only on $d$)
    such that for any $0<\ep<1$, any invertible $L$ and $R$, and any $A$ of norm at most 1, 
    $\EE (F(L,A+\ep\Xi,R))\ge 4d\log\ep-K$.
\end{corollary}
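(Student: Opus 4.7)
The plan is to read the stochastic domination directly off the tail bound in Lemma~\ref{gluing lemma}, and then derive the expectation bound by integrating tails and using $F \ge -|F|$.

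First, I would explicitly construct the dominating variable. Let $W = 4d\log(\zt/\ep) + E$, where $E \sim \Exp((4d)^{-1})$, i.e.\ $E$ has density $(4d)^{-1} e^{-x/(4d)} \mathbf 1_{x\ge 0}$. For any threshold $X \ge 0$, if $X \le 4d\log(\zt/\ep)$ then $\PP(W > X) = 1$, and if $X > 4d\log(\zt/\ep)$ then
\[
\PP(W > X) = \PP\bigl(E > X - 4d\log(\zt/\ep)\bigr) = e^{-X/(4d)} \cdot (\zt/\ep) = \zt e^{-X/(4d)} \ep^{-1}.
\]
In both regimes this equals $\min\{1, \zt e^{-X/(4d)} \ep^{-1}\}$, which is precisely the upper bound from Lemma~\ref{gluing lemma} on $\PP(|F(L,A+\ep\Xi,R)| > X)$. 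Hence $|F(L,A+\ep\Xi,R)|$ is stochastically dominated by $W$, uniformly in the choice of $L$, $R$ and $A$ since the tail bound in Lemma~\ref{gluing lemma} does not depend on those matrices.

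Second, I would bound $\EE|F|$ via the layer-cake formula $\EE|F| = \int_0^\infty \PP(|F|>x)\,dx$, splitting at the threshold $x_0 := \max\{0, 4d\log(\zt/\ep)\}$. In the case $\ep \le \zt$, one gets $\EE|F| \le x_0 + 4d = 4d\log(\zt/\ep) + 4d$; in the case $\ep > \zt$, the integrand is bounded by $\zt e^{-x/(4d)}\ep^{-1}$ throughout, giving $\EE|F| \le 4d\zt/\ep \le 4d$ (note $\ep < 1$ and $\zt/\ep \le$ the bound). Either way, using $\ep < 1$,
\[
\EE|F(L,A+\ep\Xi,R)| \le -4d\log\ep + K,
\]
where $K$ depends only on $\zt$ (hence only on $d$). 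Finally, since $F \ge -|F|$ pointwise, $\EE F \ge -\EE|F| \ge 4d\log\ep - K$, which is the desired bound.

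There is no real obstacle here; the only place requiring a little care is the two-regime split in the tail integral when $\ep$ is larger or smaller than $\zt$, which is why the final constant $K$ must absorb both $4d$ and the quantity $4d\log\zt$ (or $4d|\log\zt|$ in case $\zt<1$). The exponential tail decay of $W$ is inherited directly from the exponential component $E$, so the ``exponentially decaying tails'' claim is immediate from the construction of $W$.
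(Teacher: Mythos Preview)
Your proof is correct and follows essentially the same route as the paper: you verify stochastic domination by computing the tail of $W=4d\log(\zeta/\ep)+E$ and matching it to the bound in Lemma~\ref{gluing lemma}, then deduce the expectation bound via $\EE F\ge -\EE|F|$ and the mean of the dominating variable. Your explicit two-regime split at $x_0=\max\{0,4d\log(\zeta/\ep)\}$ is slightly more careful than the paper (which tacitly uses $\EE|F|\le\EE W=4d\log(\zeta/\ep)+4d$), but the argument is the same in substance.
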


\begin{proof}
    Observe that if $Y\sim 4d\log(\frac\zeta\ep)+\Exp((4d)^{-1})$, then for any $X>0$, 
    $\PP(Y>X)=\min(1,\exp[-(4d)^{-1}(X-4d\log(\frac\zeta\ep))])=\min(1,\zeta e^{-\frac{X}{4d}}\ep^{-1})$.

    Hence for any $L$, $R$ and $A$ as in the statement of Lemma \ref{gluing lemma}, 
    and $\Xi$ a matrix random variable with independent $U[-1,1]$ entries,
    \begin{align*}
        \EE F(L,A+\ep\Xi,R)\ge -|\EE F(L,A+\ep\Xi,R)|\ge -4d\log\tfrac\zeta\ep-4d=4d\log\ep-K,
    \end{align*}
    where $K=-4d(\log\zeta+1)$.
\end{proof}



In the sequel we will make use of the following theorem.
\begin{theorem}[Theorem 2.19 of \cite{HH}]\label{thm SLLN}
    Let $(Z_n)$ be a sequence of random variables and $\sF_n$ an increasing sequence of 
    $\sg$-algebras such that $Z_n$ is measurable with respect to $\sF_n$ for each $n\geq 1$. 
    Suppose that there exists a random variable $Z$ with $\EE(|Z|\log^+|Z|)<\infty$ and a 
    constant $c>0$ such that for each $X>0$ and each $n\geq 1$  
    \begin{align}\label{SLLN cond}
        \PP(|Z_n|>X)\leq c\PP(|Z|>X).
    \end{align} 
    Then the Strong Law of Large Numbers holds for $Z_n$, i.e. 
    \begin{align*}
        \frac{1}{n}\sum_{k=1}^n(Z_k-\EE(Z_k\rvert\sF_{k-1}))
        \xrightarrow{a.s.} 0
    \end{align*}
    as $n\to\infty$.
\end{theorem}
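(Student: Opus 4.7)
The plan is to adapt the classical Kolmogorov truncation argument to the martingale-difference setting. Let $D_k:=Z_k-\EE(Z_k|\sF_{k-1})$ and set $Y_k:=Z_k\mathbf{1}_{|Z_k|\le k}$. Decompose
\begin{equation*}
\frac1n\sum_{k=1}^n D_k
=\underbrace{\frac1n\sum_{k=1}^n\bigl(Y_k-\EE(Y_k|\sF_{k-1})\bigr)}_{\text{(I)}}
+\underbrace{\frac1n\sum_{k=1}^n(Z_k-Y_k)}_{\text{(II)}}
-\underbrace{\frac1n\sum_{k=1}^n\EE(Z_k-Y_k|\sF_{k-1})}_{\text{(III)}},
\end{equation*}
and show that each of (I), (II) and (III) tends to zero almost surely.

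Term (II) falls to Borel--Cantelli: the stochastic domination together with $\EE|Z|<\infty$ (implied by $\EE(|Z|\log^+|Z|)<\infty$) gives $\sum_k\PP(|Z_k|>k)\le c\sum_k\PP(|Z|>k)\le c\,\EE|Z|<\infty$, so almost surely $Z_k=Y_k$ for all sufficiently large $k$. For (I) I would introduce the auxiliary martingale $N_n:=\sum_{k=1}^n\bigl(Y_k-\EE(Y_k|\sF_{k-1})\bigr)/k$ and bound its quadratic variation: integrating the tail inequality gives $\EE(Y_k^2)\le c\,\EE(\min(|Z|,k)^2)$, and exchanging the order of summation in $\sum_k\EE(Y_k^2)/k^2$ reduces everything to a constant multiple of $\EE|Z|$. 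The $L^2$ martingale convergence theorem then produces almost sure convergence of $N_n$, and Kronecker's lemma converts this into (I)$\,\to 0$.

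The step that carries the real content is (III), and it is the only place where the $\log^+$ moment is used. The key estimate is
\begin{equation*}
\sum_{k=1}^\infty\frac{\EE|Z_k-Y_k|}{k}
\le c\sum_{k=1}^\infty\frac1k\EE\bigl(|Z|\mathbf{1}_{|Z|>k}\bigr)
=c\,\EE\Bigl(|Z|\sum_{1\le k<|Z|}\tfrac1k\Bigr)
\le c'\bigl(\EE|Z|+\EE(|Z|\log^+|Z|)\bigr)<\infty,
\end{equation*}
where the first inequality uses domination after integrating the tail bound, and the second uses Fubini together with the harmonic-sum estimate $\sum_{k< t}1/k\le 1+\log^+ t$. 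Consequently $\sum_k|\EE(Z_k-Y_k|\sF_{k-1})|/k<\infty$ almost surely, and a final application of Kronecker's lemma yields (III)$\,\to 0$. The main obstacle is precisely this bookkeeping: the partial harmonic sum grows like $\log|Z|$, which is exactly why $\EE(|Z|\log^+|Z|)<\infty$ is the sharp hypothesis here rather than, say, just $\EE|Z|<\infty$.
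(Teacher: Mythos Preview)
The paper does not supply a proof of this statement: it is quoted verbatim as Theorem~2.19 of \cite{HH} (Hall and Heyde) and used as a black box. So there is nothing in the paper to compare against.

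Your argument itself is correct and is essentially the standard proof one finds in Hall--Heyde. The decomposition into (I), (II), (III) is right, the Borel--Cantelli step for (II) is fine, and the $L^2$-martingale plus Kronecker argument for (I) goes through because the increments of $N_n$ are orthogonal and $\sum_k\EE(Y_k^2)/k^2\le C\,\EE|Z|$. Your treatment of (III) is also correct: the key inequality $\EE\bigl(|Z_k|\mathbf 1_{|Z_k|>k}\bigr)\le c\,\EE\bigl(|Z|\mathbf 1_{|Z|>k}\bigr)$ follows from integrating the tail bound, the Fubini swap is justified by nonnegativity, and the harmonic-sum estimate produces exactly the $|Z|\log^+|Z|$ moment. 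One small point you leave implicit: to pass from $\sum_k\EE|Z_k-Y_k|/k<\infty$ to almost-sure absolute convergence of $\sum_k\EE(Z_k-Y_k\mid\sF_{k-1})/k$, you are using $|\EE(\,\cdot\mid\sF_{k-1})|\le\EE(|\cdot|\mid\sF_{k-1})$ and then taking expectations to see that the random series has finite expectation, hence is finite a.s. That is routine, but worth stating.
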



\subsection{Constancy of limits}\label{S: Kolmogorov}
We now show that the quantities appearing in Theorems \ref{mt:2x2}-\ref{mt:dxd2} are almost surely constant.
\begin{lemma}\label{lem:gap a.s. const}
    Let $d\ge 2$, let $(A_n)$ be an arbitrary sequence of $d\times d$ matrices of 
    norm at most 1 and let $\ep>0$. Then for any $1\le j<k\le d$, the quantity
    $\liminf_{n\to\infty} \frac 1n\log \big(s_j(A_\ep^{(n)})/s_{k}(A_\ep^{(n)})\big)$ is almost surely constant.     
\end{lemma}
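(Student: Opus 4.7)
The plan is to apply the Kolmogorov $0$--$1$ law to the independent sequence $(\Xi_i)_{i\in\NN}$. Specifically, the strategy is to show that the $\liminf$ in the statement is measurable with respect to the tail $\sg$-algebra $\bigcap_{m\in\NN}\sg(\Xi_{m+1},\Xi_{m+2},\ldots)$, which is trivial by independence; this forces the $\liminf$ to be almost surely constant (possibly in $[-\infty,+\infty]$).

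To this end, fix $m\in\NN$ and factor $A_\ep^{(n)}=B_{n,m}\,A_\ep^{(m)}$ for $n>m$, where $B_{n,m}=A_{n,\ep}\cdots A_{m+1,\ep}$ depends only on $\Xi_{m+1},\ldots,\Xi_n$. A key preliminary observation is that $A_\ep^{(m)}$ is almost surely invertible: for each $i$, $\det(A_i+\ep\Xi_i)$ is a non-constant polynomial in the entries of $\Xi_i$, and absolute continuity of the distribution of $\Xi_i$ on $[-1,1]^{d^2}$ gives $\det(A_{i,\ep})\ne 0$ almost surely, hence the same holds for the product.

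Next, standard singular-value inequalities—namely $s_\ell(XY)\le s_\ell(X)\|Y\|$ together with its dual $s_\ell(XY)\ge s_\ell(X)s_d(Y)$ valid when $Y$ is invertible (obtained by applying the first inequality to $X=(XY)\,Y^{-1}$ and using $\|Y^{-1}\|=1/s_d(Y)$)—yield, almost surely,
$$\frac{s_d(A_\ep^{(m)})}{\|A_\ep^{(m)}\|}\cdot\frac{s_j(B_{n,m})}{s_k(B_{n,m})}\ \le\ \frac{s_j(A_\ep^{(n)})}{s_k(A_\ep^{(n)})}\ \le\ \frac{\|A_\ep^{(m)}\|}{s_d(A_\ep^{(m)})}\cdot\frac{s_j(B_{n,m})}{s_k(B_{n,m})}.$$
Taking logarithms, dividing by $n$, and letting $n\to\infty$, the $m$-dependent ratios—finite random variables that do not depend on $n$—drop out, leaving
$$\liminf_{n\to\infty}\frac1n\log\frac{s_j(A_\ep^{(n)})}{s_k(A_\ep^{(n)})}\ =\ \liminf_{n\to\infty}\frac1n\log\frac{s_j(B_{n,m})}{s_k(B_{n,m})}\quad\text{a.s.}$$
The right-hand side is $\sg(\Xi_{m+1},\Xi_{m+2},\ldots)$-measurable; since $m\in\NN$ was arbitrary, the $\liminf$ is tail-measurable, and Kolmogorov's $0$--$1$ law completes the argument.

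The only mild obstacle is the $\PP$-null event on which $A_\ep^{(m)}$ is singular, where the lower sandwich inequality degenerates; this is handled by the almost-sure invertibility noted above, so no restriction on $(A_n)$ beyond the hypothesis is needed.
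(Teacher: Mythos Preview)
Your proof is correct and follows essentially the same approach as the paper's: factor out the first $m$ matrices, use singular-value inequalities to sandwich $s_j(A_\ep^{(n)})/s_k(A_\ep^{(n)})$ between constant multiples of the corresponding ratio for the tail product, deduce tail-measurability, and apply Kolmogorov's $0$--$1$ law. The only difference is that you use the direct bounds $s_\ell(X)s_d(Y)\le s_\ell(XY)\le s_\ell(X)\|Y\|$, whereas the paper routes through exterior powers to bound $S_1^l$ first and then divides; your route is slightly more elementary but the idea is the same.
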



\begin{proof}
    The proof is an application of the Kolmogorov 0--1 law. 
    Write 
    $A_{i,i',\ep}$ for the product $A_{i',\ep}\cdots A_{i+1,\ep}$.
    We rely on the following well-known facts about singular values:
    firstly $s_1(A)s_d(B)\le s_1(AB)\le s_1(A)s_1(B)$ for any matrices $A$ and $B$;
    and secondly $s_1(A^{\wedge l})=S_1^l(A)$ 
    and $s_D(A^{\wedge l})=S_{d-l+1}^d(A)$ where $A^{\wedge l}$ is the $l$-fold
    exterior power of $A$ acting on the $l$-fold exterior power of $\RR^d$
    and $D=\binom dl$ is the dimension of the $l$-fold exterior power of $\RR^d$.

    Let $m$ be an arbitrary fixed natural number. We have $A_{0,m,\ep}$ is almost surely non-singular.
    Let $c$ and $C$ be random variables depending only on $\Xi_1,\ldots,\Xi_m$ defined by
    $c=\min_{i,j}S_i^j(A_{0,m,\ep})$
    and $C=\max_{i,j}S_i^j(A_{0,m,\ep})$ so that $0<c<C<\infty$.
    For $n>m$, and any $1\le l\le d$, applying the above inequalities to $(A_{0,n,\ep})^{\wedge l}=
    (A_{m,n,\ep})^{\wedge l}(A_{0,m,\ep})^{\wedge l}$,
    we obtain
 $$
        S_1^l(A_{m,n,\ep})S_{d-l+1}^d(A_{0,m,\ep})\le S_1^l(A_{0,n,\ep}) \le
        S_1^l(A_{m,n,\ep})S_{1}^l(A_{0,m,\ep}),     
$$
so that for each $l$, we have
$$
cS_1^l(A_{m,n,\ep})\le S_1^l(A^{(n)}_\ep)\le CS_1^l(A_{m,n,\ep})
$$
for all $n$.
Dividing the $l$th system of inequalities by the $(l+1)$st, we obtain for each $l$,
$$
\tfrac{c}Cs_l(A_{m,n,\ep})\le s_l(A^{(n)}_\ep)\le \tfrac{C}cs_l(A_{m,n,\ep}),
$$
and hence, dividing the $j$th system of these inequalities by the $k$th, we obtain
$$
\frac{c^2}{C^2}\frac{s_j(A_{m,n,\ep})}{s_{k}(A_{m,n,\ep})}\le \frac{s_j(A_\ep^{(n)})}{s_{k}(A_\ep^{(n)})}
\le \frac{C^2}{c^2}\frac{s_j(A_{m,n,\ep})}{s_{k}(A_{m,n,\ep})}.
$$
    Taking logs, and dividing by $n$, we see
    $$
    \liminf_{n\to\infty} \frac 1n \log \frac{s_j(A_\ep^{(n)})}{s_{k}(A_\ep^{(n)})}=
\liminf_{n\to\infty} \frac 1n \log \frac{s_j(A_{m,n,\ep})}{s_{k}(A_{m,n,\ep})}.
    $$
    Letting $\mathcal F_m$ be the smallest $\sigma$-algebra
    with respect to which the sequence of perturbations $\Xi_{m+1},\Xi_{m+2},
    \ldots$ is measurable, we deduce that the quantity
    $\liminf_{n\to\infty} \frac 1n \log \big(s_j(A_\ep^{(n)})/s_{k}(A_\ep^{(n)})\big)$ 
    is $\mathcal F_m$-measurable. Since $m$ was arbitrary, we deduce that
    $\liminf_{n\to\infty} \frac 1n \log \big(s_j(A_\ep^{(n)})/s_{k}(A_\ep^{(n)})\big)$ is tail-measurable.
    Since the perturbations are independent, it follows from the Kolmogorov 0--1 law
    that $\liminf_{n\to\infty} \frac 1n\log \big(s_j(A_\ep^{(n)})/s_{k}(A_\ep^{(n)})\big)$
    is almost surely constant as required.
\end{proof}

\subsection{Bookkeeping} \label{S: bkkpng}

The main result in this section is a meta-theorem, showing that if every block
of matrices of a fixed length has a nearby ``target'' block with a large gap between 
its $j$th and $k$th singular values, then there is a uniform lower bound on the
gap between the $j$th and $k$th Lyapunov exponents of the cocycle $A_\ep^{(n)}$
depending only on $\ep$.

Somewhat more precisely, the hypothesis is that for every 
$\ep>0$, there exists an $N$ such that for every block $M_1,\ldots,M_{N-1}$ 
of $d\times d$ matrices, there exists a \emph{target} $\target{M_m},\ldots\target{M_{m'}}$
with $1\le m\le m'\le N-1$
consisting of nearby matrices,
for which there is a \emph{gap} between the $j$th 
and $k$th singular values (that is $s_j(\target{M_{m'}}\cdots\target{M_m})/
s_k(\target{M_{m'}}\cdots\target{M_m})$ exceeding a threshold determined based on the
Gluing Lemma above).
The conclusion is that there is an explicit lower bound $c(\ep)>0$,
such that for all sequences $(A_n)_{n\in\NN}$ of $d\times d$ matrices of norm 1,
$\liminf_{n\to\infty}\frac 1n\log \big(s_j(A_\ep^{(n)})/s_k(A_\ep^{(n)})\big)
\ge c(\ep)$.

If $0<\ep<1$ is fixed, we say that $\tilde M$ is a \emph{near-identity perturbation} of
$M$ if $\tilde M=RM$ or $MR$ for some matrix $R$ satisfying $\|R-I\|<\frac\ep4$ and $\|R^{-1}-I\|<\frac\ep4$.

\begin{theorem}\label{thm:meta}
    Let $d>1$ be fixed and let $1\le j<k\le d$. 
    Suppose that for all $0<\ep<1$ and all $\gap>2$, there exists an $N\in\NN$
    such that for every sequence $M_1,\ldots,M_{N-1}$ of invertible $d\times d$ matrices
    there exists a \emph{target}: 
    numbers $1\le m\leq m'\le N-1$ and near-identity perturbations $\target{M_m},
    \ldots,\target{M_{m'}}$ of $M_m,\ldots,M_{m'}$ such that 
    $$
    \frac{s_j(\target {M_{m'}}\cdots \target{M_m})}{s_k(\target{M_{m'}}\cdots\target{M_m})}>\gap.
    $$
    Then there exists $c(\ep)>0$ such that for any sequence $(A_n)_{n\in\NN}$ of $d\times d$
    matrices of norm at most 1, the almost sure constant $\liminf_{n\to\infty}\frac 1n
    \log\big(s_j(A_\ep^{(n)})/s_k(A_\ep^{(n)})\big)$ satisfies
    \begin{equation*}
        \liminf_{n\to\infty}\frac 1n\log\frac{s_j(A_\ep^{(n)})}{s_k(A_\ep^{(n)})}>c(\ep).
    \end{equation*}
    Furthermore, the asymptotic behaviour of the lower bound $c(\ep)$ can be explicitly 
    computed based on the expectations of the random variables appearing in Section \ref{S: gluing}.
\end{theorem}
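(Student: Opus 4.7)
The plan is to implement the block-plus-gluing bookkeeping sketched in Sections~\ref{S:nonsing}--\ref{S: Kolmogorov}, with the target gap $\gap$ treated as a free parameter chosen large enough that the contributions from successful ``target hits'' dominate the accumulated gluing losses quantified by Corollary~\ref{cor:SLLN applies}. Fix $\ep\in(0,1)$ and, provisionally, a large $\gap>2$ to be pinned down at the end. The hypothesis then supplies a block length $N=N(\ep,\gap)$. Writing $n=KN$, partition the indices into consecutive blocks of length $N$; in each block $b$, first apply Lemma~\ref{lem:nonsinginit} to make the first $N-1$ matrices invertible while moving each by at most $\ep/2$ in operator norm, and then invoke the hypothesis on these invertible matrices to obtain a target sub-block of near-identity perturbations $\target{A'_m},\ldots,\target{A'_{m'}}$ whose product has $(j,k)$-gap exceeding $\gap$. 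The near-identity condition combined with the bounds from Lemma~\ref{lem:nonsinginit} ensures $\|\target{A'_i}\|\le 1-\ep/4$, $s_d(\target{A'_i})\ge\ep/4$, and $|\target{A'_i}-A_i|_\infty<\ep$, so the targets are admissible inputs to both Lemma~\ref{lem: tri ineq} and Corollary~\ref{cor pos measure}.

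Next I introduce the hit event $E_b=\{|A_{i,\ep}-\target{A'_i}|_\infty<\tinygap\ \text{for all}\ i\ \text{in block}\ b\text{'s target sub-block}\}$. By Corollary~\ref{cor pos measure}, $\PP(E_b)\ge p(\ep,N,d)>0$, and the events $E_b$ are mutually independent since they are measurable with respect to disjoint batches of the i.i.d.\ perturbations. On $E_b$, Lemma~\ref{lem: tri ineq} gives $\log q_{j,k}(\tarpert{S_b})\ge\log(\gap/2)$ for the perturbed sub-block product $\tarpert{S_b}$. To propagate this to the whole product, I iteratively apply Lemma~\ref{gluing lemma} at three single-matrix sites per block: the two matrices flanking the target sub-block inside block $b$, and one designated spacer matrix at the right end of block $b$. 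Telescoping the resulting identities and dropping the non-negative $\log q_{j,k}$-contributions from sub-products disjoint from the target sub-block yields
\begin{equation*}
\log q_{j,k}\bigl(A_\ep^{(KN)}\bigr)\;\ge\;\sum_{b=1}^{K-1}\mathbf{1}_{E_b}\log\tfrac{\gap}{2}\;+\;\sum_\alpha F_\alpha,
\end{equation*}
where each $F_\alpha=F(L_\alpha,A_{\alpha,\ep},R_\alpha)$ is a gluing residual involving a single perturbed matrix whose noise is independent of $L_\alpha$ and $R_\alpha$, with $O(K)$ such residuals in total.

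The first sum is a sum of independent Bernoulli variables of success probability at least $p$, so the classical SLLN drives its average to at least $p$ almost surely. For the second, Corollary~\ref{cor:SLLN applies} establishes that each $|F_\alpha|$ is uniformly stochastically dominated by $4d\log(\zt/\ep)+\Exp((4d)^{-1})$ (with all moments finite, so the integrability hypothesis of Theorem~\ref{thm SLLN} holds) and that $\EE(F_\alpha\mid\sF_{\alpha-1})\ge 4d\log\ep-K_d$ for a constant $K_d$ depending only on $d$. Dividing by $KN$ and letting $K\to\infty$ yields, almost surely,
\begin{equation*}
\liminf_{n\to\infty}\frac{1}{n}\log\frac{s_j(A_\ep^{(n)})}{s_k(A_\ep^{(n)})}\;\ge\;\frac{p\log(\gap/2)+c_1(d)\log\ep-c_2(d)}{N},
\end{equation*}
where $c_1,c_2$ depend only on $d$ and the bounded number of residuals per block. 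Choosing $\gap\ge\exp\bigl(C(d)(1+|\log\ep|)/p\bigr)$ renders the numerator strictly positive and produces the desired $c(\ep)>0$; almost-sure constancy of the $\liminf$ is already supplied by Lemma~\ref{lem:gap a.s. const}.

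The principal obstacle is the coupled, self-referential calibration of $\gap$, $N$ and $p$: the hit probability supplied by Corollary~\ref{cor pos measure} decays super-exponentially in $N$ (essentially as $(\ep/4)^{N^2d^2}/(3dN)^{Nd^2}$), while the hypothesis specifies $N$ only implicitly as a function of $\gap$. The condition $p\log(\gap/2)>c_2(d)-c_1(d)\log\ep$ is therefore a coupled inequality in $\gap$, whose resolution depends on the quantitative behaviour of $N(\ep,\gap)$ supplied by each specific target construction. Tracking this coupling carefully is exactly what converts the qualitative meta-hypothesis into the explicit asymptotics of the form $c(\ep)\ge\exp(-1/\ep^{\mathrm{exponent}})$ in Theorems~\ref{mt:2x2}--\ref{mt:dxd}.
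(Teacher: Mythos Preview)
Your decomposition has a genuine gap that you yourself flag but misdiagnose as a bookkeeping issue: the circular calibration of $\gap$, $N$ and $p$ is not merely delicate, it is fatal to your scheme. Because you glue at a \emph{fixed} number of sites per block regardless of whether that block's target was hit, the accumulated gluing losses scale like $cK|\lambda|$ while the gains scale like $pK\log(\gap/2)$. The resulting requirement $p\log(\gap/2)\gtrsim|\lambda|$ cannot be met: $p$ is determined by $N$ via Corollary~\ref{cor pos measure} as roughly $(\ep/4)^{d^2N^2}$, and $N$ grows with $\gap$ via the hypothesis (in every concrete target construction in Section~\ref{S: find target}, $N$ is at least polynomial in $\gap$). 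Thus $p\log(\gap/2)$ decays super-exponentially in $\gap$, so no choice of $\gap>2$ makes the numerator of your displayed lower bound positive.

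The paper's proof avoids this by gluing only at \emph{transition coordinates}, defined to be the indices immediately adjacent to target sub-blocks that were \emph{actually hit}. With this decomposition the number of gluing residuals equals $L_n$, the number of transitions, and at least half of the intervening sub-products $B_{i,\ep}$ are hit targets contributing $G(B_{i,\ep})\ge\log(\gap/2)$. Hence both gains and losses are normalised by $L_n$: one needs only $\tfrac12\log(\gap/2)+\lambda>0$, which is achieved by taking $\gap=2e^{2+2|\lambda|}$, a choice depending solely on $\ep$ and $d$. The hit probability $p$ then enters only through $L_n/n\ge p/N$, giving $c(\ep)=p/N$ without any circularity. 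A subsidiary point: to apply Theorem~\ref{thm SLLN} one must set up the filtration so that the gluing matrix's perturbation is independent of both flanking products; the paper does this by first conditioning on all non-transition perturbations (the $\sigma$-algebra $\mathcal F_0$) and then revealing the transition perturbations one at a time, a structure your ``three fixed sites per block'' scheme would also need to arrange carefully.
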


\begin{proof}
    Let $d,j$ and $k$ be as in the statement of the theorem and let $0<\ep<1$. 
        We recall the definition 
    \begin{equation}\label{eq:logsvdiff}
    F(L,A,R)=G(LAR)-G(L)-G(R),
    \end{equation}
    where $G(M)=\log q_{j,k}(M)$. 

    Let $\lambda=\inf_{L,A,R}\EE_\Xi F(L,A+\ep\Xi,R)$ where the random variable 
    $\Xi$ runs over matrices with independent $U[-1,1]$-valued entries,
    and the infimum 
%
    is taken over all non-singular matrices $L$ and $R$ and all matrices $A$
    of norm at most 1. By Corollary \ref{cor:SLLN applies}, $\lambda\ge 4d\log\ep-K$,
    where the constant $K$ depends only on $d$.
    We then apply
    the hypothesis of the theorem with $\gap$ taken to be $2e^{2+2|\lambda|}$
    to obtain an $N$ such that for every $(N-1)$-block,
    $M_{1},\ldots,M_{N-1}$
    there is a target $\target{M_{m}},\ldots,\target{M_{m'}}$. 
    
    Starting with a sequence of matrices $A_1,A_2,\ldots$, each of norm at most 1,
    we initially make perturbations of size at most $\frac\ep 2$ as in Lemma
    \ref{lem:nonsinginit} to obtain matrices $A'_1, A'_2,\ldots$ each of norm at most 
    $1-\frac\ep 2$ and with smallest singular value at least $\frac\ep 2$.

    For each $(N-1)$-block $A'_{lN+1},\ldots,A'_{lN+N-1}$ where $l$ ranges over $\NN_0$, we next apply 
    the hypothesis of the theorem to produce a target block $\target{A_{lN+m_l}},\ldots,\target{A_{lN+m_l'}}$
    with $1\le m_l<m_l'\le N-1$ 
    where $\target A_i$ is a near-identity perturbation of $A_i'$
    for each $i\in [lN+m_l,lN+m'_l]$ such that the
    product has at least the required gap of $2e^{2+2|\lambda|}$ between the $j$th and $k$th singular values.
    Note that if $\target{A_i}=R_iA_i'$, then $\|\target{A_i}-A_i'\|=\|(R_i-I)A_i'\|<\frac\ep 4$ 
    (and similarly if $\target{A_i}=A_i'R_i$), which ensures that $\|\target{A_i}-A_i\|<\frac{3\ep}4$,
    $\|\target A_i\|<1-\frac\ep 4$     and $s_d(\target{A_i})>\frac\ep 4$. 

    As established in Lemma \ref{lem: tri ineq} if the 
    random sub-block $A_{lN+m_l,\ep},\ldots,A_{lN+m_l',\ep}$ falls in the
    \emph{target area}, that is, if $|A_{i,\ep}-\target{A_i}|_\infty\le \tinygap$ for
    each $lN+m_l\le i\le lN+m_l'$, then the ratio of the
    $j$th and $k$th singular values of $A_{lN+m_l',\ep}\cdots A_{lN+m_l,\ep}$
    exceeds $e^{2+2|\lambda|}$. 
    Since $|\target{A_i}-A_i|_\infty\le \|\target{A_i}-A_i\|<\frac{3\ep} 4$,
    the $(\tfrac\ep4)^N/(3dN)$-ball in the $|\cdot|_\infty$-norm around $\target{A_i}$
    lies within the $\ep$-ball around $A_i$. That is, the target area is 
    contained in the support of $A_{i,\ep}$.
    Hence each target is independently hit
    with probability at least $p=(\tinygap)^{d^2N}=(\tfrac\ep4)^{d^2N^2}/(3dN)^{d^2N}$. 
    The quantity $c(\ep)$ will end up being $p/N$.
    
    Note that the $m_l$ and $m_l'$ are determined solely by the unperturbed sequence
    of matrices (that is, they are deterministic). We let $\mathcal B_0$ denote the $\sigma$-algebra generated by
    $\{\Xi_i\colon i\in\bigcup_l [lN+m_l,lN+m_l']\}$. That is $\mathcal B_0$
    is the smallest $\sigma$-algebra with respect to which the matrix random variables
    $\Xi_i$ in the target sub-blocks are measurable. 
    We then define a $\mathcal B_0$-measurable random subset of $\NN$ by 
    \begin{equation*}
        \mathsf{TargHit}=\bigcup_{l\in \mathbb N_0}\{[lN+m_l,lN+m'_l]\colon |A_{i,\ep}-\target{A_i}|_\infty<\tinygap\text{ for all }i\in [lN+m_l,lN+m'_l]\}.
    \end{equation*}
    That is, $\mathsf{TargHit}$ consists of the indices of those matrices in blocks where the target is successfully hit. 
    Note that since $1\le m_l<m'_l<N$ for each $l$, there is a gap of length at least 1 between successive targets
    that are hit. We now define a second subset called $\mathsf{Trans}$ (for transition indices) 
    consisting of those $i\in\NN$ immediately preceding or following a block in $\mathsf{TargHit}$. This is
    again $\mathcal B_0$-measurable. We enumerate the elements of $\mathsf{Trans}$ as
    $T_1<T_2<T_3<\ldots$, so that the indices $(T_n)$ are $\mathcal B_0$-measurable
    random variables. Lemma \ref{lem:gap a.s. const} implies that the limit in the statement is
    almost surely constant. 
    
    
    Further, we assume without loss of generality (using Lemma \ref{lem:gap a.s. const} again) that $T_1>1$
    and define $T_0=0$. 

    At this point we introduce a second $\sigma$-algebra: $\mathcal F_0$ which is generated by 
    $\{\Xi_i\colon i\not\in\mathsf{Trans}\}$. This is a refinement of $\mathcal B_0$ since all of the
    transition coordinates lie outside the target sub-blocks (each transition coordinate
    immediately precedes or follows a target sub-block). We claim that conditioned on 
    $\mathcal F_0$, the random variables $\Xi_{T_i}$ are independent and identically distributed
    with entries uniformly distributed over $[-1,1]$. The reason for this is that the $T_i$ are
    $\mathcal B_0$-measurable (hence $\mathcal F_0$-measurable) and the $\Xi_{T_i}$ are independent of
    all of the $\Xi_m$'s that generate $\mathcal F_0$. (Informally, we could say that we never ``looked at''
    the $\Xi_{T_i}$'s when identifying the transition coordinates, so that the $\Xi$ values at these times are
    independent of everything that we have looked at). 

    We are now ready to define a sequence of $\sigma$-algebras that will be used when we apply Theorem 
    \ref{thm SLLN}: Let $\mathcal F_n=\mathcal F_0\vee \sigma(\Xi_{T_1},\ldots,\Xi_{T_n})$. 
    That is, $\mathcal F_n$ is the smallest $\sigma$-algebra such that all non-transition $\Xi$ variables
    as well as the first $n$ transition $\Xi$ variables are measurable. By the observation above, $\Xi_{T_n}$
    is independent of $\mathcal F_{n-1}$. 

    We name the blocks appearing in $A_\ep^{(n)}$, defining 
    $B_{i,\ep}=A_{T_i-1,\ep}\cdots A_{T_{i-1}+1,\ep}$. (In the unusual case where $T_i=T_{i-1}+1$, $B_{i,\ep}$ is just the identity). 
    With this notation, 
    $$
    A_\ep^{(n)}=R_{n,\ep} A_{T_{L_n},\ep}B_{L_n,\ep}A_{T_{L_n-1},\ep}\cdots A_{T_2,\ep}B_{2,\ep}
    A_{T_1,\ep}B_{1,\ep},
    $$
    where $L_n=\#(\mathsf{Trans}\cap \{1,\ldots,n\})$ is an $\mathcal F_0$-measurable random variable 
    and $R_{n,\ep}$ is the tail $A_{n,\ep}\cdots A_{T_{L_n}+1,\ep}$. 
    Using \eqref{eq:logsvdiff}, we see $G(BAC)=G(B)+G(C)+F(B,A,C)$. Applying
    it twice, first separating out $B$ and $A_1$, we obtain
\begin{align*}    
    G(BA_1CA_2D)&=G(B)+G(CA_2D)+F(B,A_1,CA_2D)\\
    &=G(B)+G(C)+G(D)+F(C,A_2,D)+F(B,A_1,CA_2D).
\end{align*}
    Using induction, we have
    \begin{equation}\label{eq:Gsum}
    \begin{split}
    G(A_\ep^{(n)})&=G(B_{1,\ep})+\ldots+G(B_{L_n,\ep})+G(R_{n,\ep})+
F(R_{n,\ep},A_{T_{L_n},\ep},\bar B_{L_n,\ep})\\
    &+
    \sum_{i=1}^{L_n-1}F(B_{i+1,\ep},A_{T_i,\ep},\bar B_{i,\ep})
    ,   
    \end{split}
    \end{equation}
    where $\bar B_{i,\ep}=B_{i,\ep}A_{T_{i-1},\ep}\cdots A_{T_1,\ep}B_{1,\ep}$.
    Since each transition coordinate is immediately followed or preceded by a target that
    was hit,
    at least half of the $B_{i,\ep}$ consist of blocks where the target area
    was hit. By Lemma \ref{lem: tri ineq}, $G(B_{i,\ep})\ge 2+2|\lambda|$ for those 
    blocks. For the other blocks, $G(B_{i,\ep})\ge 0$ by definition.
    Hence we see
    \begin{equation}\label{eq:Gbound}
        \liminf_{n\to\infty}\frac{1}{L_n}\sum_{i=1}^{L_n}G(B_{i,\ep})\ge 1+|\lambda|\text{ a.s.}
    \end{equation}

    We let $Z_i=F(B_{i+1,\ep},A_{T_i,\ep},\bar B_{i,\ep})$ and note that it is $\mathcal F_i$-measurable.
    However since $\Xi_{T_i}$ is independent of $\mathcal F_{i-1}$, 
    $$
    \EE\big(F(B_{i+1,\ep},A_{T_i}+\ep\Xi_{T_i},\bar B_{i,\ep})|\mathcal F_{i-1}\big)
    =\EE_{\Xi}F(B_{i+1,\ep},A_{T_i}+\ep\Xi,\bar B_{i,\ep}).
    $$
    That is, $\EE(Z_i|\mathcal F_{i-1})$ is exactly the expectation of a random variable as in 
    Lemma \ref{gluing lemma}, so that by definition, $\EE(Z_i|\mathcal F_{i-1})\ge \lambda$.

    Hence by the version of the Strong Law of Large Numbers, Theorem \ref{thm SLLN},
    \begin{equation}\label{eq:Fbound}
    \liminf_{n\to\infty}\frac{1}{L_n}\sum_{i=1}^{L_n-1}F(B_{i+1,\ep},A_{T_i,\ep},\bar B_{i,\ep})\ge\lambda\text{ a.s.}
    \end{equation}
    Each time a target is hit, at least one transition
    index is created (two such indices are created unless hits are adjacent).
    Since each successive target is independently hit with probability at least $p$,
    we may apply 
    Theorem \ref{thm SLLN} one more time (with $\sigma$-algebras $\mathcal F_n'=\sigma(\{
    \Xi_i\colon i\in \bigcup_{l=0}^{n-1}\{lN+1,\ldots,lN+N-1\}\})$) to obtain
    \begin{equation}\label{eq:Lbound}
        \liminf_{n\to\infty}\frac{L_n}n\ge \frac pN\text{ a.s.}
    \end{equation}
    Adding \eqref{eq:Gbound} and \eqref{eq:Fbound}; and then multiplying by \eqref{eq:Lbound},
    we obtain 
   $$ \liminf_{n\to\infty}\frac 1n \left(\sum_{i=1}^{L_n}G(B_{i,\ep})+
    \sum_{i=1}^{L_n-1}F(B_{i+1,\ep},A_{T_i,\ep},\bar B_{i,\ep})\right)
    \ge \frac pN\text{ a.s.}
    $$
    Comparing this with \eqref{eq:Gsum}, it remains to control $G(R_{n,\ep})$ and 
    $F(R_{n,\ep},A_{T_{L_n},\ep},\bar B_{L_n,\ep})$.
    By definition, $G(R_{n,\ep})$ is non-negative. 
    Finally we will show that $\frac 1nF(R_{n,\ep},A_{T_{L_n},\ep},\bar B_{L_n,\ep})\to 0$ a.s.
    Using \eqref{eq:Gsum} and the inequalities above, this is sufficient to lead to the conclusion
    $$
    \liminf_{n\to\infty} \frac 1n \log\frac{s_j(A_\ep^{(n)})}{s_k(A_\ep^{(n)})}\ge \frac pN\text{ a.s.}
    $$
    To show that $\frac 1nF(R_{n,\ep},A_{T_{L_n},\ep},\bar B_{L_n,\ep})\to 0$ a.s., let $\eta>0$ be arbitrary. Then
    \begin{align*}
    &\PP(|\tfrac 1nF(R_{n,\ep},A_{T_{L_n},\ep},\bar B_{L_n,\ep})|>\eta)\\
    &=\EE\big(\PP(|\tfrac 1nF(R_{n,\ep},A_{T_{L_n},\ep},\bar B_{L_n,\ep})|>\eta|\mathcal F_{L_n-1})\big)\\
    &=\EE\big(\PP(|F(R_{n,\ep},A_{T_{L_n},\ep},\bar B_{L_n,\ep})|>n\eta|\mathcal F_{L_n-1})\big)\\
    &=\EE\big(\PP_\Xi(|F(R_{n,\ep},A_{T_{L_n},\ep}+\ep\Xi,\bar B_{L_n,\ep})|>n\eta|\mathcal F_{L_n-1})\big).
    \end{align*}
    By the bounds in Lemma \ref{gluing lemma}, the above quantities are summable, so that by the first
    Borel-Cantelli lemma, almost surely $|\tfrac 1nF(R_{n,\ep},A_{T_{L_n},\ep},\bar B_{L_n,\ep})|\le\eta$ for all but finitely
    many $n$. Since $\eta$ is an arbitrary positive number, we see 
    $\tfrac 1nF(R_{n,\ep},A_{T_{L_n},\ep},\bar B_{L_n,\ep})\to 0$ a.s.\ as required.
\end{proof}

We also state an alternative formulation of Theorem \ref{thm:meta} that will be useful in Section
\ref{S:abstract}.

\begin{corollary}\label{cor:metareform}
    Let $d>1$ be fixed and let $1\le j<k\le d$. 
    Let $0<\ep<1$ and suppose that for all $\gap>2$, there exists an $N\in\NN$
    such that for every sequence $M_1,\ldots,M_{N-1}$ of $d\times d$ matrices of norm 
    at most $1-\frac\ep2$ and $d$th singular value at least $\frac\ep 2$,
    there exists a \emph{target}: 
    numbers $1\le m\leq m'\le N-1$ and perturbations $\target{M_m},
    \ldots,\target{M_{m'}}$ of $M_m,\ldots,M_{m'}$ such that $|\target{M_i}-M_i|_\infty\le\frac\ep{4d}$
    for each $m\le i\le m'$ and 
    $$
    \frac{s_j(\target {M_{m'}}\cdots \target{M_m})}{s_k(\target{M_{m'}}\cdots\target{M_m})}>\gap.
    $$
    Then $c(\ep)>0$, where $c(\ep)$ is as in the statement of Theorem  \ref{thm:meta}.
\end{corollary}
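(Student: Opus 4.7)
The plan is to deduce this corollary directly from the proof of Theorem~\ref{thm:meta} by checking that the $|\cdot|_\infty$-perturbation hypothesis supplies exactly the same three target conditions that the original bookkeeping consumed. Recall that the near-identity perturbation hypothesis of Theorem~\ref{thm:meta} was used only to guarantee, for each target matrix, that (a) $\|\target{A_i} - A_i\| < \tfrac{3\ep}{4}$ (so the target lies inside the $\ep$-ball in which $A_{i,\ep}$ is supported), (b) $\|\target{A_i}\| \le 1 - \tfrac{\ep}{4}$, and (c) $s_d(\target{A_i}) \ge \tfrac{\ep}{4}$. These are precisely the hypotheses Lemma~\ref{lem: tri ineq} requires on a target sub-block in order to propagate the singular value gap.

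First I would pre-process the given sequence $(A_n)$ via Lemma~\ref{lem:nonsinginit} to obtain $(A'_n)$ with $\|A'_n - A_n\| \le \tfrac{\ep}{2}$, $\|A'_n\| \le 1 - \tfrac{\ep}{2}$, and $s_d(A'_n) \ge \tfrac{\ep}{2}$. The corollary's hypothesis applies precisely to matrices of this kind. Partitioning $(A'_n)$ into blocks of length $N-1$ and applying the hypothesis to each block (with $\gap = 2e^{2+2|\lambda|}$, where $\lambda$ is the infimum from Corollary~\ref{cor:SLLN applies}) yields, for each $l\in\NN_0$, target indices $m_l \le m'_l$ in $[1,N-1]$ and target matrices $\target{A_{lN+m_l}}, \ldots, \target{A_{lN+m'_l}}$ satisfying $|\target{A_i} - A'_i|_\infty \le \tfrac{\ep}{4d}$ and having $(j,k)$-gap at least $\gap$.

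Next I would verify properties (a)--(c). From $\|M\| \le d |M|_\infty$ we obtain $\|\target{A_i} - A'_i\| \le \tfrac{\ep}{4}$, hence $\|\target{A_i}\| \le 1 - \tfrac{\ep}{4}$; the inequality $|s_l(A) - s_l(A')| \le \|A-A'\|$ gives $s_d(\target{A_i}) \ge \tfrac{\ep}{4}$; and $|\target{A_i} - A_i|_\infty \le \tfrac{\ep}{4d} + \tfrac{\ep}{2} < \ep$, so the target area (the $\tinygap$-ball around $\target{A_i}$ in $|\cdot|_\infty$) sits inside the support of $A_{i,\ep}$ and is independently hit with probability at least $p = (\tinygap)^{d^2 N}$. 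Once (a)--(c) are in hand, the remainder of the proof of Theorem~\ref{thm:meta} applies verbatim: Lemma~\ref{lem: tri ineq} delivers the logarithmic gap $\ge 2 + 2|\lambda|$ on hit target sub-blocks, the Gluing Lemma~\ref{gluing lemma} together with Corollary~\ref{cor:SLLN applies} controls the transition matrices $A_{T_i,\ep}$, and the Strong Law of Large Numbers (Theorem~\ref{thm SLLN}) applied to the filtration $(\mathcal F_n)$ constructed there yields the lower bound $c(\ep) \ge p/N > 0$. The only new work is the elementary norm comparison in this paragraph; no substantive obstacle arises, since the corollary's hypothesis was engineered to fit exactly into the scheme already built for Theorem~\ref{thm:meta}.
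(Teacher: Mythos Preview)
Your proposal is correct and takes essentially the same approach as the paper: both note that the $|\cdot|_\infty$-bound gives $\|\target{M_i}-M_i\|\le\tfrac\ep4$, which is all the proof of Theorem~\ref{thm:meta} actually extracts from the near-identity condition, after which the remainder of that proof runs unchanged. The paper asserts that Lemma~\ref{lem:nonsinginit} is unnecessary here, but your inclusion of it is warranted: the corollary's hypothesis only applies to matrices of norm $\le 1-\tfrac\ep2$ with $s_d\ge\tfrac\ep2$, whereas the conclusion $c(\ep)>0$ must hold for arbitrary $(A_n)$ of norm at most one, so the pre-processing step that produces the $A'_n$ is still required before the target hypothesis can be invoked.
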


The proof is almost exactly the same as the proof of Theorem \ref{thm:meta}. 
Unlike in that theorem, there is no need to apply Lemma \ref{lem:nonsinginit}.
The new hypothesis ensures that $\|\target{M_i}-M_i\|\le\frac\ep 4$ for each $m\le i\le m'$
which is exactly the conclusion that was obtained from the ``near identity perturbation"
hypothesis in the proof of Theorem \ref{thm:meta}. From that point in the proof onwards, that
is the only fact about the $\target{M_i}$ that is used, so that the remainder of the proof
of Theorem \ref{thm:meta} applies directly.

\section{Concrete targets}\label{S: find target}
This section identifies suitable targets for our general strategy put forth in \S\ref{S: gen strategy}. 
First, in \S\ref{S: complement},
we show that if we can build a $(j,j+1)$ $\gap$-large gap then we can build a $(d-j, d-j+1)$ $\gap$-large gap.  
In \S\ref{S:far}, we give a general argument to identify targets in the case where singular vectors remain \textit{spread} as they evolve within a block. 
Finally, \S\ref{S:2dtarget} and \S\ref{S:3dtarget} complete the target identification step for the case of $2\times 2$ -- or more generally, extremal singular values-- and $3\times3$ matrices, respectively.

Let $B=\mr{M_{\blocklength-1}}\cdots \mr{M_{1}}$ 
be a product of $\blocklength-1$ invertible matrices.
Let the singular vectors of $B$ be $v_1,\ldots,v_d$,
listed as usual in order of decreasing singular values. 
If two or more singular values coincide, we just fix once and for all a
choice of orthonormal singular vectors. 
Let $E_j(B)$ denote the span of $(v_i)_{i\le j}$ and $F_j(B)$ denote the span of $(v_i)_{i>j}$.
For $1\le j<d$ and for each $1\le n< \blocklength$, we let
$E_{j,n}$ denote the linear span of 
$(\mr{M}_n\ldots \mr{M}_1v_i)_{i\le j}$
and let $F_{j,n}$ denote the span of $(\mr{M}_n\ldots \mr{M}_1v_i)_{i>j}$.
We write
$v^n_i$ for the unit normalization of $M_n\cdots M_1v_i$. 
Let $\delta_{n,j}$ denote the minimal distance between norm-1
vectors in $E_{j,n}$ and $F_{j,n}$ and let $\delta_n=\min_{1\leq j < d}\delta_{n,j}$.
We will say the block $\mr{M_{\blocklength-1}},\ldots ,\mr{M_{1}}$
is $\eta$-\emph{spread} if $\delta_n>\eta$ for each $1\le n<\blocklength$
and \emph{nearly aligned} otherwise. As mentioned above, in the case where some singular
values have multiplicity, spreadness or nearly alignedness may depend on the choice
of singular vectors, so formally in our definition, these properties depend not only
on the matrices, but also on the choice of singular vectors.

\subsection{Complementarity} \label{S: complement}

Recall that for given a matrix $M$, a fixed $\ep>0$, and a matrix $R$ satisfying
$\|R^{\pm 1}-I\|<\frac\ep4$,
we call $RM$ or $MR$ 
a \emph{near-identity perturbation}.

Let $d\in\NN$ and $\ep>0$ be fixed.
We assume there is a
quantity $\gap$, depending on $d$ and $\ep$, called the 
\emph{required gap size}. 
We say that we can \emph{build a $\gap$-large gap between the $j$th and $(j+1)$st singular values}
if there is an $N$ such that 
for every block $M_1,\ldots,M_{N-1}$ of invertible $d\times d$ matrices,
there exists a sub-block $M_{m},\ldots,M_{m'}$ with $1\le m\leq m'\le N-1$
and targets $\target{M_{m}},\ldots,\target{M_{m'}}$ consisting of near-identity
perturbations of the sub-block
such that $s_j(\target{M_{m'}}\cdots\target{M_m})/s_{j+1}(\target{M_{m'}}\cdots\target{M_m})>\gap$. 
In this case, we shall say that the block $\target{M_{m'}}\cdots\target{M_m}$
has a \emph{$\gap$-large $(j,j+1)$-gap}.

\begin{lemma}\label{lem:complementarity}
    Let $d\in\NN$ and $\ep>0$. If for every block of $N-1$ invertible $d\times d$ matrices, we can 
    build a $\gap$-large gap between the $j$th and
    $(j+1)$st singular values, then for every block of $N-1$ invertible $d\times d$ matrices,
    we can build a $\gap$-large gap between the $(d-j)$th and $(d-j+1)$st
    singular values.
\end{lemma}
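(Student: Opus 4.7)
The plan is to pass to inverses and use the reciprocal relationship between singular values. Recall that for any invertible $d\times d$ matrix $M$, one has $s_i(M^{-1}) = 1/s_{d-i+1}(M)$, and hence
$$
\frac{s_{d-j}(M^{-1})}{s_{d-j+1}(M^{-1})} = \frac{1/s_{j+1}(M)}{1/s_j(M)} = \frac{s_j(M)}{s_{j+1}(M)}.
$$
So a $(j,j+1)$-gap for $M$ is exactly a $(d-j,d-j+1)$-gap for $M^{-1}$. The second ingredient is that near-identity perturbations behave well under inversion: if $\widetilde N = RN$ with $\|R-I\|,\|R^{-1}-I\|<\frac\ep4$, then $\widetilde N^{-1} = N^{-1}R^{-1}$, and since $R^{-1}$ still satisfies $\|R^{-1}-I\|,\|(R^{-1})^{-1}-I\|<\frac\ep4$, we see $\widetilde N^{-1}$ is a near-identity perturbation of $N^{-1}$ (the case $\widetilde N = NR$ is analogous).

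Given the required gap size $\gap$, apply the hypothesis to obtain $N\in\NN$ such that every $(N-1)$-block of invertibles has a sub-block admitting near-identity targets with $(j,j+1)$-gap exceeding $\gap$. Now let $M_1,\ldots,M_{N-1}$ be an arbitrary block of invertible matrices. Form the reversed-inverse block $P_i := M_{N-i}^{-1}$ for $1\le i\le N-1$, which is again a block of invertible matrices. Apply the hypothesis to $(P_i)$ to obtain indices $1\le n\le n'\le N-1$ and near-identity perturbations $\target{P_n},\ldots,\target{P_{n'}}$ of $P_n,\ldots,P_{n'}$ with
$$
\frac{s_j(\target{P_{n'}}\cdots\target{P_n})}{s_{j+1}(\target{P_{n'}}\cdots\target{P_n})}>\gap.
$$

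Now set $m:=N-n'$ and $m':=N-n$, so that $1\le m\le m'\le N-1$. Define $\target{M_{N-k}}:=\target{P_k}^{-1}$ for each $k\in\{n,\ldots,n'\}$. By the second ingredient above, each $\target{M_{N-k}}$ is a near-identity perturbation of $(M_{N-k}^{-1})^{-1}=M_{N-k}$. Reading off the product in reversed order,
$$
\target{M_{m'}}\cdots\target{M_m} \;=\; \target{P_n}^{-1}\cdots\target{P_{n'}}^{-1} \;=\; \bigl(\target{P_{n'}}\cdots\target{P_n}\bigr)^{-1},
$$
so by the first ingredient,
$$
\frac{s_{d-j}(\target{M_{m'}}\cdots\target{M_m})}{s_{d-j+1}(\target{M_{m'}}\cdots\target{M_m})} \;=\; \frac{s_j(\target{P_{n'}}\cdots\target{P_n})}{s_{j+1}(\target{P_{n'}}\cdots\target{P_n})} \;>\;\gap,
$$
which is precisely the required $(d-j,d-j+1)$-target for the original block.

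There is no substantive obstacle here; the argument is essentially index bookkeeping organized around the symmetry $s_i(M^{-1}) = 1/s_{d-i+1}(M)$. The only mild point to get right is that the definition of near-identity perturbation is symmetric under inversion (both $R$ and $R^{-1}$ are controlled), which is why the conclusion transfers cleanly between $M$ and $M^{-1}$.
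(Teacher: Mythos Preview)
Your proof is correct and follows essentially the same approach as the paper: pass to the reversed-inverse block, apply the hypothesis, then invert back, using the reciprocal relation $s_i(M^{-1})=1/s_{d-i+1}(M)$ and the fact that near-identity perturbations are closed under inversion. Your write-up is slightly more explicit about these two ingredients than the paper's, but the argument is the same.
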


\begin{proof}
    Let $M_{1},\ldots,M_{N-1}$ be a block of invertible matrices and set $\tilde{M}_n=M_{N-n}^{-1}$
    so that we obtain a new block $\tilde{M}_1,\ldots,\tilde{M}_{N-1}$ of invertible matrices. 
    By assumption, there exist a sub-block and 
    near-identity perturbations $\target{\tilde{M}_m},\ldots,\target{\tilde{M}_{m'}}$ 
    such that 
    $$
    \frac{s_j(\target{\tilde{M}_{m'}}\cdots\target{\tilde{M}_m})}{s_{j+1}(\target{\tilde{M}_{m'}}\cdots\target{\tilde{M}_m})}>\gap.
    $$
    Now let $\target{M_l}=(\target{\tilde{M}}_{N-l})^{-1}$ for $l=N-m',\ldots,N-m$. These
    are near-identity perturbations of the matrices $M_{N-m'},\ldots,M_{N-m}$. 
    Since $\target{M_{N-m}}\cdots\target{M_{N-m'}}=(\target{\tilde{M}_{m'}}\cdots\target{\tilde{M}_m})^{-1}$,
    we see $s_{d-j}(\target{M_{N-m}}\cdots\target{M_{N-m'}})=s_{j+1}(\target{\tilde{M}_{m'}}\cdots\target{\tilde{M}_m})^{-1}$
    and $s_{d-j+1}(\target{M_{N-m}}\cdots\target{M_{N-m'}})=s_{j}(\target{\tilde{M}_{m'}}\cdots\target{\tilde{M}_m})^{-1}$.
    This ensures that 
$$\frac{s_{d-j}(\target{M_{N-m}}\cdots\target{M_{N-n}})}{s_{d-j+1}(\target{M_{N-m}}\cdots\target{M_{N-m'}})}
    =\frac{s_{j}(\target{\tilde{M}_{m'}}\cdots\target{\tilde{M}_m})}{s_{j+1}(\target{\tilde{M}_{m'}}\cdots\target{\tilde{M}_m})}>\gap$$ as required.
\end{proof}

\subsection{Target when singular vectors remain spread}\label{S:far}


\begin{lemma}[Target for spread block]\label{lem:far}
Let $1\le j<d$, $\gap>1$, $0<\eta<1$ and let $0<\ep<1$.
Consider a  product of 
$\blocklength-1\geq \frac{16 \log \gap}{\ep \eta}$ invertible matrices, $\mr{M_{\blocklength-1}}
\cdots\mr{M_{1}}$ (a ``block'').
If the block is $\eta$-spread, then there exists a product 
$\target{M_{\blocklength-1}},\ldots,\target{M_{1}}$ (a ``target block'')
with the properties:
\begin{enumerate}
    \item $\target{M_{n}}=R_n \mr{M_{n}}$ with $\|R_n^{\pm 1}-I\|
    \le\frac\ep 4$ for each $1\le n< \blocklength$;
    \item 
    $$
    \frac{s_j(\target{M_{\blocklength-1}}\cdots\target{M_{1}})}
    {s_{j+1}(\target{M_{\blocklength-1}}\cdots\target{M_{1}})}\ge \reqgap.
    $$
\end{enumerate}
\end{lemma}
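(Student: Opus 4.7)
The plan is to construct, at each step $n$, an oblique scaling $R_n$ that fixes $E_{j,n}$ pointwise and contracts $F_{j,n}$ by a fixed factor $1-\tau$, where $\tau$ is chosen small compared to $\ep\eta$. Explicitly, with $\tau=\ep\eta/16$, set
$$R_n=\Pi_{E_{j,n}\parallel F_{j,n}}+(1-\tau)\,\Pi_{F_{j,n}\parallel E_{j,n}},$$
so that $R_n-I=-\tau\,\Pi_{F_{j,n}\parallel E_{j,n}}$ and $R_n^{-1}-I=\tfrac{\tau}{1-\tau}\,\Pi_{F_{j,n}\parallel E_{j,n}}$. Since $\delta_{n,j}\ge\delta_n>\eta$ by the $\eta$-spread hypothesis, Lemma~\ref{lem:projbound} gives $\|\Pi_{F_{j,n}\parallel E_{j,n}}\|\le 2/\eta$, and with this choice of $\tau$ both $\|R_n-I\|$ and $\|R_n^{-1}-I\|$ are at most $\ep/4$. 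This verifies condition~(1).

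For condition (2), the plan is to show by induction on $n$ that
$$\target{M_n}\cdots\target{M_1}\,v_i=\begin{cases} M_n\cdots M_1\,v_i, & i\le j,\\ (1-\tau)^n\,M_n\cdots M_1\,v_i, & i>j.\end{cases}$$
The inductive step follows from the definition of $R_n$: by the inductive hypothesis $\target{M_{n-1}}\cdots\target{M_1}v_i$ is a scalar multiple of $M_{n-1}\cdots M_1 v_i$, so applying $M_n$ produces a scalar multiple of $M_n\cdots M_1 v_i$, which lies in $E_{j,n}$ when $i\le j$ and in $F_{j,n}$ when $i>j$ by definition; then $R_n$ acts as the identity or as multiplication by $1-\tau$ respectively.

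To read off the singular values of $\target B:=\target{M_{N-1}}\cdots\target{M_1}$, write the SVD $B=M_{N-1}\cdots M_1=U\Sigma V^T$ where $V=[v_1,\ldots,v_d]$ and the columns $u_i$ of $U$ are orthonormal (so $u_i$ equals the left singular vector $v_i^{N-1}$). Applying the induction at $n=N-1$ yields
$$\target B=U\,\mathrm{diag}\bigl(s_1(B),\ldots,s_j(B),(1-\tau)^{N-1}s_{j+1}(B),\ldots,(1-\tau)^{N-1}s_d(B)\bigr)\,V^T.$$
The diagonal entries are non-negative and non-increasing, because $s_j(B)\ge s_{j+1}(B)\ge(1-\tau)^{N-1}s_{j+1}(B)$, so this is a valid SVD of $\target B$. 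Hence
$$\frac{s_j(\target B)}{s_{j+1}(\target B)}=\frac{s_j(B)}{(1-\tau)^{N-1}s_{j+1}(B)}\ge(1-\tau)^{-(N-1)}\ge e^{(N-1)\tau}\ge\gap,$$
where the last inequality combines $\log\bigl(1/(1-\tau)\bigr)\ge\tau$, the choice $\tau=\ep\eta/16$, and the hypothesis $N-1\ge 16\log\gap/(\ep\eta)$.

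The construction is explicit, so the main obstacle is conceptual rather than technical: one must realise that the perturbation needs to be an \emph{oblique} scaling (identity on $E_{j,n}$, contracting $F_{j,n}$ by $1-\tau$) rather than an orthogonal one, so that the inductive step is clean and the multiplicative contribution telescopes cleanly through the block. It is precisely this obliqueness that forces the use of the $\eta$-spread hypothesis via Lemma~\ref{lem:projbound} to control $\|R_n^{\pm 1}-I\|$.
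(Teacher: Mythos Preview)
Your proof is correct and follows essentially the same approach as the paper: define $R_n$ as an oblique scaling along the splitting $E_{j,n}\oplus F_{j,n}$, control $\|R_n^{\pm1}-I\|$ via Lemma~\ref{lem:projbound} and the $\eta$-spread hypothesis, and then telescope through the block so that the target product differs from $B$ only by a scalar factor on one of the two invariant pieces, allowing the singular values to be read off from the SVD of $B$. The only cosmetic difference is that the paper expands $E_{j,n}$ by the factor $1+\ep\eta/8$ whereas you contract $F_{j,n}$ by $1-\ep\eta/16$; both choices yield the same bound $N-1\ge 16\log\gap/(\ep\eta)$.
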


\begin{proof}
    We first describe the simple idea: 
    after applying each $\mr{M_n}$, we expand the images of the leading $j$ singular
    vectors by a factor of $1+\frac {\ep\eta}8$.
    The spread condition implies that the fast and slow subspaces are far enough apart that the
    perturbation to each matrix is of size at most $\frac\ep 4$. In $\blocklength-1$ steps,
    the cumulative effect of this is to increase the ratio between the $j$th and $(j+1)$st
    singular values by a factor of $\gap$, by the choice of $N$. 

Let the spaces $E_n$ and $F_n$ be as above so that $\RR^d=E_n\oplus F_n$.
By Lemma \ref{lem:projbound}, we have 
$\|\Pi_{E_n\parallel F_n}\|\le 2/\delta_n<2/\eta$.

We then set $R_n=I+\tfrac{\ep\eta}8\,\Pi_{E_{n}\parallel F_{n}}$. 
The calculation above shows that $\|R_n-I\|
=\|\tfrac{\ep \eta}8\,\Pi_{E_n\parallel F_n}\|
\le \tfrac{\ep \eta}8(2/\eta)\le\tfrac\ep 4$.
Similarly $R_n^{-1}=I-\big(\tfrac{\ep\eta}8/(1+\tfrac{\ep\eta}8)\big)\Pi_{E_n\parallel F_n}$
so that $\|R_n^{-1}-I\|\le\tfrac\ep 4$ also.
Since by definition, $M_n(E_{n-1})=E_{n}$ and $M_n(F_{n-1})=F_{n}$, we have
\begin{align*}
\mr{M_n}\Pi_{E_{n-1}\parallel F_{n-1}}&=\Pi_{E_{n}\parallel F_{n}}\mr{M_n}\quad\text{and}\\
\mr{M_n}\Pi_{F_{n-1}\parallel E_{n-1}}&=\Pi_{F_{n}\parallel E_{n}}\mr{M_n},\
\end{align*}
so that
\begin{align*}
\mr{M_n}\big(I+\tfrac{\ep \eta}8
\Pi_{E_{n-1}\parallel F_{n-1}}\big)&=
\mr{M_n}\big(\Pi_{F_{n-1}\parallel E_{n-1}}+(1+\tfrac{\ep \eta}8)
\Pi_{E_{n-1}\parallel F_{n-1}}\big)\\
&=\big(\Pi_{F_{n}\parallel E_{n}}+(1+\tfrac{\ep \eta}8)
\Pi_{E_{n}\parallel F_{n}}\big)\mr{M_n}.
\end{align*}
Using this inductively, along with the fact that $\Pi_{E_n\parallel F_n}$ and $\Pi_{F_n\parallel E_n}$ 
are complementary projections we see that
\begin{align*}
\target{M_{\blocklength-1}}\cdots\target{M_1}
&=\big(\Pi_{F_{\blocklength-1}\parallel E_{\blocklength-1}}+(1+\tfrac{\ep \eta}8)^{\blocklength-1}\Pi_{E_{\blocklength-1}\parallel F_{\blocklength-1}}\big)
\mr{M_{\blocklength-1}}\cdots\mr{M_1}.
\end{align*}
Hence 
\begin{equation*}
    s_i(\target{M_{\blocklength-1}}\cdots\target{M_1})=
    \begin{cases}
    (1+\tfrac{\ep \eta}8)^{\blocklength-1}s_i(\mr{M_{\blocklength-1}}\cdots\mr{M_1})&\text{for $i\le j$;}\\
    s_i(\mr{M_{\blocklength-1}}\cdots\mr{M_1})&\text{for $i>j$.}
    \end{cases}
\end{equation*}
In particular, 
\begin{align*}
\frac{s_j(\target{M_{\blocklength-1}}\cdots\target{M_1})}{s_{j+1}(\target{M_{\blocklength-1}}\cdots\target{M_1})}
&=
\left(1+\frac{\ep \eta}8\right)^{\blocklength-1}
\frac{s_j(\mr{M_{\blocklength-1}}\cdots\mr{M_1})}{s_{j+1}(\mr{M_{\blocklength-1}}\cdots\mr{M_1})}
\ge \left(1+\frac{\ep \eta}8\right)^{\blocklength-1}.
\end{align*}
Since $0<\ep<1$ and $0<\eta<1$, we have $1+\frac{\ep\eta}8>\exp(\frac{\ep\eta}{16})$.
Hence the choice of $N$ ensures that $\frac{s_j(\target{M_{\blocklength-1}}\cdots\target{M_{1}})}
    {s_{j+1}(\target{M_{\blocklength-1}}\cdots\target{M_{1}})}\ge \reqgap$, as required.
\end{proof}

\subsection{Targets  for $d=2$ and extremal singular value gaps}\label{S:2dtarget}
This section describes how to identify targets  for blocks of $2\times 2$ matrices. 
We start by presenting an auxiliary result about extremal singular value gaps
(that is $s_1(\cdot)/s_d(\cdot)$), 
valid for arbitrary dimension. The main result of this section is Lemma~\ref{lem:tard} which, specialised to $d=2$, yields targets for $2\times 2$ matrices.

\begin{lemma}\label{lem:closed2}
Let $B$ be a non-singular $d\times d$ matrix and suppose there exist
orthogonal vectors $u$ and $v$ such that $\angle(Bu,Bv)\leq \eta$. 
Then $s_1(B)/s_d(B)\geq 1/\eta$. 

The same conclusion holds if there exist $u$ and $v$ with $\angle(u,v)\leq\eta$
but $Bu$ and $Bv$ are orthogonal. 
\end{lemma}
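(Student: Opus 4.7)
The plan is to reduce the problem to a two-dimensional computation on the subspace $W = \operatorname{span}(u,v)$. After replacing $u$ and $v$ by their normalizations (which preserves all hypotheses since $\angle$ depends only on directions), one may assume $u,v$ are orthonormal. Let $\sigma_1 \geq \sigma_2 > 0$ denote the singular values of the restriction $B|_W \colon W \to \RR^d$. The variational characterizations $s_1(B) = \max_{\|x\|=1}\|Bx\|$ and $s_d(B) = \min_{\|x\|=1}\|Bx\|$, with the extremizations restricted to $W\cap S$, give $s_1(B) \geq \sigma_1$ and $s_d(B) \leq \sigma_2$, so it suffices to prove $\sigma_1/\sigma_2 \geq 1/\eta$.

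For the two-dimensional calculation, because $\{u,v\}$ is orthonormal in $W$, expanding the determinant of the Gram matrix of $B|_W$ yields
\begin{equation*}
\sigma_1 \sigma_2 \;=\; \sqrt{\|Bu\|^2\|Bv\|^2 - \langle Bu,Bv\rangle^2} \;=\; \|Bu\|\,\|Bv\|\,|\sin\theta|,
\end{equation*}
where $\theta \in [0,\pi]$ denotes the Euclidean angle between $Bu$ and $Bv$. Since $u$ and $v$ themselves lie on $W\cap S$, we have $\sigma_1 \geq \max(\|Bu\|,\|Bv\|)$, and combining this with the identity above gives $\sigma_2 \leq \min(\|Bu\|,\|Bv\|)\,|\sin\theta|$, hence $\sigma_1/\sigma_2 \geq 1/|\sin\theta|$.

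Translating the hypothesis into a bound on $|\sin\theta|$ is routine: the paper's convention $\angle(Bu,Bv) = \big\|Bu/\|Bu\| - Bv/\|Bv\|\big\|$ is exactly the chord length $2\sin(\theta/2)$, so the hypothesis gives $\sin(\theta/2) \leq \eta/2$, and then $|\sin\theta| = 2\sin(\theta/2)|\cos(\theta/2)| \leq \eta$, completing the first statement. The second statement follows by applying the first to $B^{-1}$ with the orthogonal inputs $Bu, Bv$: their $B^{-1}$-images are $u$ and $v$, which satisfy $\angle(u,v)\leq\eta$, so the first part yields $s_1(B^{-1})/s_d(B^{-1}) \geq 1/\eta$, and the identity $s_j(B^{-1})=1/s_{d-j+1}(B)$ converts this to $s_1(B)/s_d(B) \geq 1/\eta$. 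No step is especially delicate; the main conceptual move is the restriction to $W$, after which the rest is a clean two-dimensional computation, with only minor care needed to distinguish the paper's chord-length convention for $\angle$ from the Euclidean angle $\theta$.
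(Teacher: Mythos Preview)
Your proof is correct and follows essentially the same approach as the paper: both restrict to the two-dimensional subspace $W=\operatorname{span}(u,v)$, bound the area $\|Bu\wedge Bv\|=\|Bu\|\|Bv\||\sin\theta|$ (the paper via exterior products, you via the Gram determinant), and compare with the singular values of $B|_W$ before passing to $s_1(B)$ and $s_d(B)$. The only difference is cosmetic---you make the trigonometric conversion from chord length to $|\sin\theta|$ explicit, whereas the paper absorbs it into the one-line inequality $\|Bu\wedge Bv\|\le\|Bu\|\|Bv\|\angle(Bu,Bv)$---and both handle the second statement by applying the first to $B^{-1}$.
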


\begin{proof}
We assume without loss of generality that $\|Bu\|\ge \|Bv\|$. 
We then have $\|B^{\wedge 2}(u\wedge v)\|=\|Bu\wedge Bv\|
\le \|Bu\|\|Bv\|\angle(Bu,Bv)\le\|Bu\|^2\eta$. 
Letting $L$ be the restriction of $B$ to $\lin(u,v)$
and $L^{\wedge 2}$ be its exterior square, it follows that
$\|L^{\wedge 2}\|\le \eta s_1(L)^2$. Since $\|L^{\wedge 2}\|
=s_1(L)s_2(L)$, it follows that $s_2(L)\le \eta s_1(L)$. 
Finally $s_1(B)\ge s_1(L)$ and $s_d(B)\le s_2(L)$
giving the required conclusion.
The second statement follows from taking inverses of $B$. 
    \end{proof}

\begin{lemma}[Target for $d\times d$ matrices]\label{lem:tard}
Let $0<\ep<1$ and $\gap>1$ be given.
Consider a block of ${\blocklength-1}\geq \frac{16}{\ep} \gap \log \gap$ 
$d\times d$ invertible matrices, $\mr{M_1},\ldots, \mr{M_{\blocklength-1}}$.
Then there exist $1\le m\leq m'<\blocklength$ and a sequence 
$\target{M_m},\ldots,\target{M_{m'}}$ of near-identity perturbations
of $M_m,\ldots,M_{m'}$
with the property
    $$
\frac{s_1(\target{M_{m'}}\cdots\target{M_m})}
    {s_{d}(\target{M_{m'}}\cdots\target{M_m})}\ge \reqgap.
    $$
\end{lemma}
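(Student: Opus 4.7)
The plan is to split based on whether the block is sufficiently spread, with threshold $\eta := 1/\gap$. Since $\gap > 1$, we have $\eta \in (0,1)$, which is the range required by the earlier machinery. The dichotomy is: either the block is $\eta$-spread, in which case Lemma~\ref{lem:far} applied with $j = 1$ gives a $(1,2)$-gap (which is automatically a $(1,d)$-gap); or it is not, in which case the failure of spread already forces an extremal singular-value gap for some initial sub-product, via Lemma~\ref{lem:closed2}.

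\textbf{Case 1 (spread).} If the block is $\eta$-spread, I plan to take $m = 1$, $m' = N-1$ and apply Lemma~\ref{lem:far} with $j = 1$. The required length hypothesis $N - 1 \geq 16 \log \gap/(\ep \eta) = 16\gap \log \gap/\ep$ is exactly the hypothesis of the present lemma, so the lemma produces near-identity perturbations $\target{M_1},\ldots,\target{M_{N-1}}$ with $s_1/s_2$ of the target product at least $\gap$. Since $s_d \le s_2$, this already gives $s_1/s_d \ge \gap$.

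\textbf{Case 2 (not spread).} If the block is not $\eta$-spread, there exist $1 \le n < N-1$ and $1 \le j < d$ with $\delta_{n,j} \leq \eta$. I plan to take $m = 1$, $m' = n$, and choose the trivial targets $\target{M_i} = M_i$ (so $R_i = I$, which satisfies $\|R_i^{\pm1} - I\| = 0 < \ep/4$). Writing $L_n := M_n \cdots M_1$, I observe that $E_{j,n} = L_n(E_j(B))$ and $F_{j,n} = L_n(F_j(B))$, and that $E_j(B) \perp F_j(B)$ because they are spanned by disjoint subsets of the orthonormal singular vectors of $B$. The assumption $\delta_{n,j} \leq \eta$ furnishes unit vectors $e \in E_{j,n}$ and $f \in F_{j,n}$ with $\|e-f\| \le \eta$; pulling these back through $L_n$ and normalizing yields orthogonal unit vectors $\hat u \in E_j(B)$, $\hat v \in F_j(B)$ with $\angle(L_n \hat u, L_n \hat v) = \|e-f\| \le \eta$. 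Lemma~\ref{lem:closed2} applied to $L_n$ then gives $s_1(L_n)/s_d(L_n) \ge 1/\eta = \gap$, as required.

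The substantive step is the pullback in Case 2: one must verify that the preimages of $e$ and $f$ under $L_n$ can be taken orthogonal and that the angle between their $L_n$-images equals $\|e-f\|$. Both points rely only on the orthogonality of the singular-vector decomposition of $B$ and the definition of $\angle$. Beyond this, the proof is a clean dispatch to the earlier Lemmas~\ref{lem:far} and~\ref{lem:closed2}, with no new estimates required; the choice $\eta = 1/\gap$ is precisely what balances the two cases so that the hypothesis on $N$ suffices for both.
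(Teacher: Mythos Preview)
Your proof is correct and follows the same two-case split as the paper, with the same choice $\eta=1/\gap$ and the same dispatch to Lemma~\ref{lem:far} in the spread case and Lemma~\ref{lem:closed2} applied to the initial sub-product $M_n\cdots M_1$ in the non-spread case. The only difference is cosmetic: the paper simply asserts that Lemma~\ref{lem:closed2} applies, whereas you spell out the pullback of $e,f$ through $L_n$ to orthogonal unit vectors in $E_j(B)$ and $F_j(B)$; also note the minor slip that the range for $n$ should be $1\le n\le N-1$ rather than $1\le n<N-1$.
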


\begin{proof}
We consider two cases:
\begin{enumerate}
    \item The block $\mr{M}_{\blocklength-1}, \dots, \mr{M}_1$ is $1/\gap$-spread. That is, $\delta_n>1/\gap$ for each $1\le n<\blocklength$.
    \item The block $\mr{M}_{\blocklength-1}, \dots, \mr{M}_1$ is nearly aligned. That is, there exists $1\le n<\blocklength$ such that $\delta_n\leq 1/\gap$.
\end{enumerate}
In the first case, applying Lemma~\ref{lem:far}, with $j=1$ and
$\eta=1/\gap$, identifies a target  for $\mr{M}_{\blocklength-1}, \dots \mr{M}_1$; namely 
$\target{B}:=\target{M_{\blocklength-1}}\dots \target{M_{1}}$, as constructed in 
Lemma~\ref{lem:far}, satisfies $\frac{s_1(\target B)}{s_d(\target B)}\ge
\frac{s_1(\target{B})}{s_2(\target{B})}\geq \reqgap$.

In the second case, the result follows from Lemma~\ref{lem:closed2}, with
$\target{B}:=\mr{M_{n}} \dots \mr{M_1}$.
\end{proof}

\begin{remark}\label{rem:tar2}
    Note that Lemma~\ref{lem:tard}, specialised to $d=2$, yields targets for $2\times 2$ matrices.
\end{remark}

\subsection{Targets for $d=3$}\label{S:3dtarget}
In this section, we describe how to identify targets for blocks of $3\times 3$ matrices.
We start with two auxiliary lemmas. The main result of this section is Lemma~\ref{lem:tar3}.
The reader should recall the definitions of $v_j$, $v^n_j$, $E^n_j$, $F^n_j$ and $\delta_{n,j}$ from the beginning of 
Section \ref{S: find target}. 

\begin{lemma}\label{lem:12close}
    Let $M_1,\ldots,M_{\blocklength-1}$ be invertible $3\times 3$ matrices
    and let $B=M_{\blocklength-1}\cdots M_1$. Suppose $s_1(B)/s_2(B)< \gap$. 
    Suppose further that for some $1\le n< \blocklength$, $\angle(v^n_1,v^n_2)<\gap^{-5}$.
    Then, setting $B_1=M_n\cdots M_1$ and $B_2=M_{\blocklength-1}\cdots M_{n+1}$, either
    $s_1(B_1)/s_2(B_1)\ge \gap$ or
    $s_1(B_2)/s_2(B_2)\ge\gap$. 
\end{lemma}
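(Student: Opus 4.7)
The plan is to argue by contradiction: suppose both $s_1(B_1)/s_2(B_1) < \gap$ and $s_1(B_2)/s_2(B_2) < \gap$, and derive a contradiction with the hypotheses. I first apply Lemma~\ref{lem:closed2} twice. Since the orthonormal pair $v_1, v_2$ is sent by $B_1$ to the nearly parallel pair $v_1^n, v_2^n$ (angle less than $\gap^{-5}$), the lemma gives $s_1(B_1)/s_3(B_1) \geq \gap^5$; since the nearly parallel pair $v_1^n, v_2^n$ is sent by $B_2$ to the orthogonal pair $B_2 v_i^n \propto u_i$, the lemma gives $s_1(B_2)/s_3(B_2) \geq \gap^5$. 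Combined with the contradiction assumption, both matrices must have $(2,3)$-gap exceeding $\gap^4$: their top two singular values are comparable while the third is much smaller.

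Next, I locate $\mathrm{span}(v_1, v_2) \subset \mathbb{R}^3$ via a wedge-product computation. Denote by $e_j^{(i)}, f_j^{(i)}$ the right and left singular vectors of $B_i$. Since $\|B_1^{\wedge 2}(v_1 \wedge v_2)\| = \|B_1 v_1 \wedge B_1 v_2\| = a_1 a_2 \sin\theta_n \leq s_1(B_1)^2 \gap^{-5}$ (with $a_i = \|B_1 v_i\|$), expanding $v_1 \wedge v_2 = \gamma_{12}\, e_1^{(1)} \wedge e_2^{(1)} + \gamma_{13}\, e_1^{(1)} \wedge e_3^{(1)} + \gamma_{23}\, e_2^{(1)} \wedge e_3^{(1)}$ in the singular 2-vector basis of $B_1^{\wedge 2}$, the top-component contribution $|\gamma_{12}| s_1(B_1) s_2(B_1)$ together with $s_1(B_1)/s_2(B_1) < \gap$ yields $|\gamma_{12}| \leq \gap^{-4}$. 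Consequently $v_1 \wedge v_2 \approx \hat u \wedge e_3^{(1)}$ for some unit $\hat u \in \mathrm{span}(e_1^{(1)}, e_2^{(1)})$, and $v_3 \approx \hat u^\perp$, the unit vector in $\mathrm{span}(e_1^{(1)}, e_2^{(1)})$ perpendicular to $\hat u$.

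In parallel, I perform a weak-direction analysis of the restriction $B_2|_V$ with $V = \mathrm{span}(v_1^n, v_2^n)$. The reasoning in the proof of Lemma~\ref{lem:closed2} makes this restriction a 2D-to-2D map of condition number at least $\gap^5$, with $v_1^n$ approximately the weak input direction. Interlacing gives $\sigma_1 \geq s_2(B_2)$, so with $s_1(B_2)/s_2(B_2) < \gap$ we get $\sigma_2 \leq \sigma_1/\gap^5 \leq s_2(B_2)/\gap^4$. Decomposing $v_1^n$ in the right singular basis of $B_2$ and using $\|B_2 v_1^n\|^2 = \sigma_2^2$ then forces $v_1^n$ to lie within $O(\gap^{-4})$ of $e_3^{(2)}$. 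Thus $B_1 v_1 \approx a_1 e_3^{(2)}$, which combined with $v_1 \approx \hat u$ identifies $\hat u$ as the preimage direction in $\mathrm{span}(e_1^{(1)}, e_2^{(1)})$ that $B_1$ maps to $e_3^{(2)}$.

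Finally I compare the images under $B$. The alignment gives $B v_1 \approx s_1(B_1)\, s_3(B_2)\, f_3^{(2)}$, so $\|B v_1\| \approx s_1(B_1)\, s_3(B_2)$. On the other hand $B_1 \hat u^\perp$ lies in $\mathrm{span}(f_1^{(1)}, f_2^{(1)})$ perpendicular to $B_1 \hat u \approx e_3^{(2)}$, and since $\{e_3^{(2)}\}^\perp = \mathrm{span}(e_1^{(2)}, e_2^{(2)})$ in $\mathbb{R}^3$, this image lies in $\mathrm{span}(e_1^{(2)}, e_2^{(2)})$, giving $\|B v_3\| \approx s_1(B_1)\, s_1(B_2)$. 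Hence $\|B v_3\|/\|B v_1\| \approx s_1(B_2)/s_3(B_2) \geq \gap^5 > 1$, contradicting the ordering $\|B v_3\| = s_3(B) \leq s_1(B) = \|B v_1\|$. The main obstacle will be to track the approximation errors rigorously through each alignment step; the exponent $\gap^{-5}$ in the hypothesis is chosen precisely so that accumulated errors (scaling as inverse polynomial powers of $\gap$) cannot spoil the final factor-$\gap^5$ contradiction.
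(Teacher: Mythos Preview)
Your proposal contains a genuine gap in the final step. The assertion that $B_1\hat u^\perp$ is perpendicular to $B_1\hat u$ is false in general: restricted to $\mathrm{span}(e_1^{(1)},e_2^{(1)})$, the map $B_1$ has condition number $s_1(B_1)/s_2(B_1)$, which under your contradiction hypothesis can be as large as $\gap$. A $2\times 2$ map with condition number near $\gap$ can send orthogonal inputs to outputs whose angle is as small as $O(1/\gap)$, so $B_1\hat u$ and $B_1\hat u^\perp$ may be nearly parallel rather than perpendicular. Consequently there is no reason for $B_1\hat u^\perp$ (and hence $v_3^n$) to land in $\mathrm{span}(e_1^{(2)},e_2^{(2)})$, and the lower bound $\|Bv_3\|\approx s_1(B_1)s_1(B_2)$ is unsupported. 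A related issue is that the wedge-product step only shows $\mathrm{span}(v_1,v_2)\approx\mathrm{span}(\hat u,e_3^{(1)})$; it does not give $v_1\approx\hat u$ individually, which you invoke when identifying $\hat u$ as the preimage of $e_3^{(2)}$.

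The paper's proof avoids any analysis of $v_3$. It combines your wedge bound
\[
S_1^2(B)=\|Bv_1\wedge Bv_2\|\le s_1(B_2)^2\,\|B_1v_1\wedge B_1v_2\|\le s_1(B_2)^2 s_1(B_1)^2\,\gap^{-5}
\]
with the general singular-value inequality $s_1(B_2B_1)\ge s_2(B_2)s_2(B_1)$ (proved by a dimension count: a $2$-plane on which $B_1$ expands by at least $s_2(B_1)$ and a $2$-plane on which $B_2$ expands by at least $s_2(B_2)$ must meet after applying $B_1$). Together with $s_2(B)>\gap^{-1}s_1(B)$ this gives $S_1^2(B)\ge \gap^{-1}s_2(B_1)^2s_2(B_2)^2$, hence
\[
\frac{s_1(B_1)^2 s_1(B_2)^2}{s_2(B_1)^2 s_2(B_2)^2}\ge \gap^{4},
\]
forcing one of the ratios to exceed $\gap$. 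This inequality $s_1(B)\ge s_2(B_1)s_2(B_2)$ is precisely the missing lower bound that your $v_3$-argument was trying to supply.
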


\begin{proof}
We use the (known) inequality $s_1(B_2B_1)\ge s_{1+k}(B_2)s_{d-k}(B_1)$, valid in all dimensions.
We briefly indicate the proof: by the max-min characterization of singular values, 
namely  $s_k(B)=\max_{\{W:\mathrm{dim} W=k\}}\min_{\{w\in W, \|w\|=1\}}\|Bw\|$,
there is a $(d-k)$-dimensional subspace $U$ of $\RR^d$ such that $\|B_1u\|\ge s_{d-k}(B_1)$
for all $u\in U\cap S$ and a $(1+k)$-dimensional subspace $V$ of $\RR^d$ such that $\|B_2v\|\ge
s_{1+k}(B_2)$ for all $v\in V\cap S$. Since $\dim(B_1U)+\dim V=\dim U+\dim V>d$, the
spaces $B_1U$ and $V$ must intersect. That is, there exists $u\in U\cap S$ such that $B_1u\in V$. 
Hence $\|Bu\|=\|B_2(B_1u)\|\ge s_{1+k}(B_2)\|B_1u\|\ge s_{1+k}(B_2)s_{d-k}(B_1)$ as required.


Specializing to the case $d=3$ and $k=1$ gives the inequality
\begin{equation}\label{eq:3dSVrel}
s_1(B)\ge s_2(B_2)s_2(B_1),
\end{equation}
valid for $3\times 3$ matrices.
Combining this with the assumption
that $B$ does not have a $\gap$-large (1,2)-gap, we see $s_2(B)\ge \gap^{-1}s_2(B_2)s_2(B_1)$
so that $S_1^2(B)\ge \gap^{-1}s_2(B_2)^2s_2(B_1)^2$. 

On the other hand, we have 
\begin{align*}
    S_1^2(B)&=s_1(B)s_2(B)\\
    &=\|Bv_1\wedge Bv_2\|\\
    &\le s_1(B_2)^2 \|B_1v_1\wedge B_1v_2\|\\
    &\le s_1(B_2)^2  \|B_1v_1\|\,\|B_1v_2\|\gap^{-5}\\
    &\le s_1(B_2)^2s_1(B_1)^2\gap^{-5}.
\end{align*}

Combining this with the preceding inequality gives
$$
\frac{s_1(B_2)^2s_1(B_1)^2}
{s_2(B_2)^2s_2(B_1)^2}\ge \gap^{4},
$$
so that, by taking square roots, at least one of $s_1(B_2)/s_2(B_2)\ge\gap$
and $s_1(B_1)/s_2(B_1)\ge\gap$ must hold as required.
\end{proof}

\begin{lemma}\label{lem:build12gap13close}
Let $0<\ep<1$, $\gap>2$ and $N>2$.
Let $\mr{M_1},\ldots, \mr{M_{\blocklength-1}}$ be a sequence of invertible $3\times 3$ matrices.
If there exists $1\le n\le N-1$ such that $\delta_{n,2}<\min\{\gap^{-9},\frac\ep4\}$,
then there exist $1\le m\leq m'< N$, a sequence of orthogonal matrices $R_m,\ldots,R_{m'}$ and target matrices
$\target{M_i}=R_i\mr{M_i}$
such that 
\begin{enumerate}
    \item $\|R_i^{\pm 1}-I\|\le\frac\ep 4$ for each $m\le i\le m'$;
    \label{it:smallpertvjv3}
    \item \label{it:biggapvjv3}
    $$
    \frac {s_1(\target{M_{m'}}\cdots\target{M_m})}{s_2(\target{M_{m'}}\cdots\target{M_m})}
    \ge \gap.
    $$
\end{enumerate}
\end{lemma}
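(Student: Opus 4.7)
The plan is to exploit the alignment $\delta_{n,2}<\gap^{-9}$ to first produce a large $(1,3)$ singular value ratio on the sub-product $B_1:=M_n\cdots M_1$ via Lemma~\ref{lem:closed2}, and then to extract the desired $(1,2)$ gap from this, either by taking $B_1$ itself as the target or via an additional near-identity perturbation. The argument reduces to a case split based on how the resulting $(1,3)$ gap on $B_1$ distributes between its $(1,2)$ and $(2,3)$ ratios.

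By Lemma~\ref{lem:linspacebound}, the condition $\delta_{n,2}<\gap^{-9}$ yields a unit vector $w \in E_{2,n}$ with $\|w - v_3^n\|<\gap^{-9}$ (after choosing signs so $w$ is close to $+v_3^n$). Writing $w=c_1v_1^n+c_2v_2^n$ and setting $\alpha_j:=\|B_1v_j\|$, the vector $u:=(c_1/\alpha_1)v_1+(c_2/\alpha_2)v_2\in\lin(v_1,v_2)$ satisfies $B_1 u=w$. Since $u\perp v_3$ while $B_1 u=w$ is nearly parallel to $B_1v_3=\alpha_3 v_3^n$, the first statement of Lemma~\ref{lem:closed2} applied to the orthogonal pair $(u,v_3)$ yields
\[
\frac{s_1(B_1)}{s_3(B_1)}\ \ge\ \frac{1}{\angle(B_1u,\,B_1v_3)}\ \ge\ \frac{1}{\|w-v_3^n\|}\ >\ \gap^9.
\]

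Since $s_1(B_1)/s_3(B_1)=\bigl(s_1(B_1)/s_2(B_1)\bigr)\bigl(s_2(B_1)/s_3(B_1)\bigr)>\gap^9$, at least one of the two factors must exceed $\gap^{9/2}>\gap$. In the first scenario, $s_1(B_1)/s_2(B_1)\ge\gap$, and I conclude by setting $m=1$, $m'=n$, and $R_i=I$ for each $i$; the identity trivially satisfies $\|I^{\pm 1}-I\|=0<\ep/4$. In the remaining scenario, $s_2(B_1)/s_3(B_1)>\gap^{9/2}$ while $s_1(B_1)/s_2(B_1)<\gap$, so $B_1$ has a large $(2,3)$ gap but only a small $(1,2)$ gap, and its image concentrates near a 2-plane with the top two singular directions stretched comparably.

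The main obstacle is this latter case, as a single near-identity perturbation can only multiply a singular value ratio by a bounded factor of order $1+\ep/4$, which is generally far smaller than $\gap$. My plan here is to apply a near-identity shear supported on the dominant 2-plane of $B_1$ (spanned by its top two left singular vectors) that differentially stretches the two dominant directions; the slack $\delta_{n,2}<\ep/4$ provides the room for such a shear to stay a near-identity perturbation, and the large $(2,3)$ gap of $B_1$ is meant to amplify the shear's effect within the dominant 2-plane. Alternatively, one may iterate such shears across several matrices of $[m,m']$, combining the per-step amplification in the spirit of Lemma~\ref{lem:far} with the $(2,3)$-gap structure of $B_1$ to reach the required multiplicative ratio $\gap$.
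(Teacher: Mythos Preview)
Your reduction to a $(1,3)$-gap on $B_1=M_n\cdots M_1$ via Lemma~\ref{lem:closed2} is correct, and the easy case $s_1(B_1)/s_2(B_1)\ge\gap$ is fine. The genuine gap is the remaining case, where you only have $s_2(B_1)/s_3(B_1)>\gap^{9/2}$ and $s_1(B_1)/s_2(B_1)<\gap$. Your proposed fix does not work: a near-identity $R$ with $\|R^{\pm1}-I\|\le\ep/4$ satisfies $s_j(RB_1)\in[(1+\ep/4)^{-1}s_j(B_1),(1+\ep/4)s_j(B_1)]$, so $s_1(RB_1)/s_2(RB_1)$ can change by at most a factor $(1+\ep/4)^2$, which is far below $\gap$. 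The ``room'' provided by $\delta_{n,2}<\ep/4$ does not help here, since it concerns the geometry of $E_{2,n},F_{2,n}$ (images under $B_1$ of singular vectors of the \emph{full} product $B$), not the singular structure of $B_1$ itself; a shear on the dominant $2$-plane of $B_1$ has no leverage from this alignment. Your alternative of iterating shears in the spirit of Lemma~\ref{lem:far} is also not available: that lemma requires the block to be $\eta$-spread at \emph{every} step, whereas the present hypothesis gives information only at the single index $n$.

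The missing idea, which the paper uses, is to bring the complementary factor $B_2=M_{N-1}\cdots M_{n+1}$ into play and perturb \emph{only} at position $n$. Since $\delta_{n,2}<\ep/4$, there exist unit vectors $e\in E_{2,n}$ and $f\in F_{2,n}$ with $\|e-f\|<\ep/4$, so one can choose a near-identity rotation $R_n$ with $R_ne=f$. The target is the full product $\target{B}=B_2R_nB_1$ (so $m=1$, $m'=N-1$, $R_i=I$ for $i\ne n$). The point is that the preimages $e_0\in E_2(B)$ and $f_0\in F_2(B)$ of $e$ and $f$ under $B_1$ are orthogonal, and \emph{both} are sent by $B_2R_nB_1$ to vectors of norm at most $\gap^{-3}s_1(B)$: for $f_0$ this is because $R_nB_1f_0=f\in F_{n,2}$ stays slow under $B_2$, and for $e_0$ because $R_n$ redirects $B_1e_0=e$ to $f$, again slow under $B_2$ (the quantitative control uses $s_1(B_i)/s_3(B_i)\ge\gap^9$ from Lemma~\ref{lem:closed2} and the assumption that none of $B,B_1,B_2$ already has a $(1,2)$-gap). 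This $2$-dimensional compressed subspace forces $s_2(\target{B})\le\gap^{-3}s_1(B)$, while $s_1(\target{B})\ge s_2(B_2)s_2(B_1)\ge\gap^{-2}s_1(B)$ by \eqref{eq:3dSVrel}, yielding the desired $(1,2)$-gap. Your approach, restricted to $B_1$ alone, cannot access this mechanism.
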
 

\begin{proof}
Let $B=M_{N-1}\cdots M_1$, $B_1=M_n\cdots M_1$, and $B_2=M_{N-1}\cdots M_{n+1}$.
If $s_1(B)/s_2(B)\geq \gap$, $s_1(B_1)/s_2(B_1)\geq \gap$ or $s_1(B_2)/s_2(B_2)\geq \gap$, 
we can use $B$, $B_1$ or $B_2$ as the target block respectively (with $m=1$, $m'=N-1$; 
$m=1$, $m'=n$; or $m=n+1$, $m'=N-1$, 
respectively, and $R_i=I$ for all $i$) to satisfy the conclusion of the lemma. 

Hence we assume $B, B_1$ and $B_2$ have no $\gap$-large $(1,2)$-gap. That is, we assume $s_1(B)/s_2(B)$, 
$s_1(B_1)/s_2(B_1)$ and $s_1(B_2)/s_2(B_2)$ are all less than $\gap$.
Using \eqref{eq:3dSVrel} and the absence of a $\gap$-large
(1,2)-gap for $B_1$ and $B_2$, we see
\begin{equation}
    s_1(B) \geq s_2(B_1) s_2(B_2) \geq \gap^{-2} s_1(B_1) s_1(B_2). \label{eq:vjv3SVCompBounds1}
\end{equation}
Since $B$ does not have a $\gap$-large (1,2)-gap, we see from \eqref{eq:3dSVrel}
\begin{equation}
  s_2(B)\geq \gap^{-1} s_1(B)\ge \gap^{-1}s_2(B_1) s_2(B_2).\label{eq:vjv3SVCompBounds2}\\
\end{equation}
Finally, using the fact that $S_1^3(B)=|\det B|=|\det B_1||\det B_2|=S_1^3(B_1)S_1^3(B_2)$
together with \eqref{eq:vjv3SVCompBounds1} and \eqref{eq:vjv3SVCompBounds2} we see
\begin{equation}
    s_3(B)\leq \gap^{3} s_3(B_1) s_3(B_2).
\label{eq:vjv3SVCompBounds3}
\end{equation}

By hypothesis, there exist $e\in E_{n,2}\cap S$ and $f\in F_{n,2}\cap S$ 
satisfying $\|e-f\|<\min\{\gap^{-9},\frac\ep4\}$. 
We let $m=1$, $m'=N-1$ and let $R_i=I$ for each $i\ne n$ so that $\target{M_i}=M_i$, for each $i\neq n$.
We define $\target{M_n}=R_nM_n$, 
where $R_n$ is a near identity orthogonal transformation 
such that $R_n e=f$.
Since $\|f-e\|<  \min\{\gap^{-9},\frac\ep4\}$,
$R_n$ may be chosen so that
$\|R_n^{\pm 1}-I\| \leq \frac{\ep}{4}$.
We next show that $\target{B}=B_2 R_n B_1$ satisfies condition \eqref{it:biggapvjv3}.
Let $e_0$ and $f_0$ be such that $B_1e_0=e$ and $B_1f_0=f$. 

Since $f_0\in F_2(B)$, $\|Bf_0\|=s_3(B)\|f_0\|$.
By properties of singular values, $\|B_1f_0\|\ge s_3(B_1)\|f_0\|$. 
Hence
    $\|B_2 R_n B_1 \frac{e_0}{\|e_0\|}\| =\|B_1 \frac{e_0}{\|e_0\|}\|
    \frac{\|Bf_0\|}{\|B_1 f_0\|}
    \leq s_1(B_1) \frac{s_3(B)}{s_3(B_1)} 
    \leq s_1(B_1) s_3(B_2) \gap^{3}$,
  where the last inequality follows from  \eqref{eq:vjv3SVCompBounds3}.
  Lemma~\ref{lem:closed2} ensures that 
$s_1(B_2)/s_3(B_2)\geq \gap^{9}$. Thus, using 
\eqref{eq:vjv3SVCompBounds1} in the last step, we get
  \begin{equation}\label{eq:bdv_jgrowth}
       \|B_2 R_n B_1 \tfrac{e_0}{\|e_0\|}\|\leq
      \gap^{-6} s_1(B_1) s_1(B_2)  \leq \gap^{-4} s_1(B).
  \end{equation}

Next, $\|B_2 R_n B_1 \frac{f_0}{\|f_0\|}\| \leq s_1(B_2) \|B_1 \frac{f_0}{\|f_0\|}\|
\leq 
s_1(B_2) \frac{\|B f_0\|/\|f_0\|}{s_3(B_2)}=s_1(B_2) \frac{s_3(B)}{s_3(B_2)}
\leq s_1(B_2) s_3(B_1) \gap^{3},$
where the last inequality follows from  \eqref{eq:vjv3SVCompBounds3}.
  Lemma~\ref{lem:closed2} ensures that $s_1(B_1)/s_3(B_1)\geq \gap^{9}$. Thus,
  using \eqref{eq:vjv3SVCompBounds1} again, we have
    \begin{equation}\label{eq:bdv_3growth}
      \left\|B_2 R_n B_1 \frac{f_0}{\|f_0\|}\right\| \leq s_1(B_2) s_1(B_1) \gap^{-6} \leq s_1(B) \gap^{-4}.
  \end{equation}
Note that, by construction, $e_0 \perp f_0$.
Then, \eqref{eq:bdv_jgrowth} and \eqref{eq:bdv_3growth} imply that for every  
$0\neq v\in \lin(e_0,f_0)$, 
$\|B_2 R_n B_1 v\|/\|v\| \leq \sqrt{2} 
\gap^{-4}s_1(B)
\leq \gap^{-3}s_1(B)$. 
Therefore by the min-max characterization of singular values,
\begin{equation}    \label{eq:bds2B2RB1}
s_2(B_2 R_n B_1)\leq \gap^{-3}s_1(B).
\end{equation}

In view of 
\eqref{eq:3dSVrel},
\begin{equation} \label{eq:bds1B2RB1}
        s_1(B_2 R_n B_1) \geq s_2(B_2 R_n) s_2(B_1)=s_2(B_2) s_2(B_1) \geq \gap^{-2} s_1(B_2)  s_1(B_1) \geq \gap^{-2} s_1(B).
\end{equation}
Since $\target{B}=B_2 R_n B_1$,  \eqref{eq:bds2B2RB1} and \eqref{eq:bds1B2RB1} imply 
$s_1(\target{B})/s_2(\target{B})\geq \gap$, as required.
\end{proof}

\begin{lemma}[Target for $3\times 3$ matrices]\label{lem:tar3}
Let $\gap>2$, $0<\ep<1$, $\eta=\min\{\gap^{-9},\frac{\ep}{4}\}$ and consider a product of  $3\times 3$ invertible matrices, $\mr{M_{\blocklength-1}},\ldots, \mr{M_{1}}$, with $\blocklength-1\geq \frac{16}{\ep \eta^2}  \log \gap$.
Then there exists $1\le m\leq m'<N$ and a sequence 
$\target{M_{m}},\ldots,\target{M_{m'}}$
with the properties:
\begin{enumerate}
    \item $\target{M_{n}}$ is a near-identity perturbation of $M_n$ for each $m\le n\le m'$;
     \item The target $\target{B}:=\target{M_{m'}}\cdots\target{M_{m}}$ satisfies
    $$
\frac{s_1(\target{B})}
    {s_{2}(\target{B})}\ge \reqgap.
    $$    
\end{enumerate}
\end{lemma}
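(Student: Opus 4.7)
The plan is to apply a trichotomy based on the spread structure of the block. Set $\eta=\min\{\gap^{-9},\ep/4\}$ as in the statement. Either the block is $\eta$-spread, or there exist $1\le n<N$ and $j\in\{1,2\}$ with $\delta_{n,j}\le\eta$.

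When the block is $\eta$-spread, I apply Lemma~\ref{lem:far} with $j=1$. The length hypothesis $N-1\ge 16\log\gap/(\ep\eta^2)$ implies the weaker requirement $N-1\ge 16\log\gap/(\ep\eta)$ of Lemma~\ref{lem:far} since $\eta<1$, and the conclusion provides near-identity targets $\target{M_{N-1}},\ldots,\target{M_1}$ whose product realises a $(1,2)$-gap at least $\gap$.

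Otherwise the block is not $\eta$-spread. If $\delta_{n,2}\le\eta$ for some $n$, then Lemma~\ref{lem:build12gap13close} applies directly and produces a target on some sub-block. The remaining sub-case is $\delta_{n,1}\le\eta$ for some $n$ while $\delta_{m,2}>\eta$ for every $m$. Here the key geometric point is that the unit vector $w=\alpha v_2^n+\beta v_3^n\in F_{n,1}$ nearest to $v_1^n$ satisfies $|\beta|\le 2\delta_{n,1}/\delta_{n,2}$: this follows by applying $\Pi_{F_{n,2}\parallel E_{n,2}}$ to $v_1^n=w+\epsilon$ (so that $\beta v_3^n=-\Pi_{F_{n,2}\parallel E_{n,2}}(\epsilon)$) and invoking the projection bound of Lemma~\ref{lem:projbound} on $\RR^3=E_{n,2}\oplus F_{n,2}$. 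If $|\beta|$ is small enough, then $v_1^n$ is within angle $\gap^{-5}$ of $\pm v_2^n$, and Lemma~\ref{lem:12close} extracts a $(1,2)$-gap from one of $B=M_{N-1}\cdots M_1$, $B_1=M_n\cdots M_1$, or $B_2=M_{N-1}\cdots M_{n+1}$. If instead $|\beta|$ is not small, the inequality above forces $\delta_{n,2}$ itself to be small, returning us to the first sub-case and to Lemma~\ref{lem:build12gap13close}.

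The hardest part will be calibrating the constants so that every configuration is covered without a gap: the threshold $\gap^{-9}$ appearing in $\eta$ must be chosen precisely to dovetail with the $\gap^{-5}$ angle threshold of Lemma~\ref{lem:12close} and with the $|\beta|\le 2\delta_{n,1}/\delta_{n,2}$ estimate, so that the three regions (spread, $\delta_{n,2}$-small, $\delta_{n,1}$-small) tile the parameter space. If this calibration cannot be closed with the available lemmas alone, the alternative is to mirror the rotation construction used in Lemma~\ref{lem:build12gap13close}: choose a near-identity rotation $R_n$ sending $v_1^n$ to $w$ and verify by a direct computation, using the (1,3)-gap $s_1(B_2)/s_3(B_2)\ge 1/\delta_{n,1}\ge \gap^9$ furnished by Lemma~\ref{lem:closed2}, that $\target B=B_2 R_n B_1$ enjoys the required $(1,2)$-gap.
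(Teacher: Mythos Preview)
Your overall strategy—the trichotomy on spread, with Lemma~\ref{lem:far}, Lemma~\ref{lem:build12gap13close}, and Lemma~\ref{lem:12close} handling the three cases—is exactly the paper's. What does not close is the calibration, and the fix is precisely the one hinted at by the $\eta^2$ in the block-length hypothesis.

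With the single threshold $\eta$ that you use, the sub-case ``$\delta_{n,1}\le\eta$ for some $n$ while $\delta_{m,2}>\eta$ for every $m$'' is genuinely stuck: your bound $|\beta|\le 2\delta_{n,1}/\delta_{n,2}$ only yields $|\beta|\le 2\eta/\eta=2$, which says nothing. Your escape clause (``if $|\beta|$ is not small, the inequality forces $\delta_{n,2}$ small, returning us to the first sub-case'') is circular—you have already assumed $\delta_{n,2}>\eta$ throughout this sub-case, so there is nowhere to return to.

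The paper resolves this by running the trichotomy at \emph{two} scales: it asks whether the block is $\eta^2$-spread (not $\eta$-spread), and the intermediate case becomes ``$\delta_{n,1}\le\eta^2$ for some $n$, and $\delta_{m,2}\ge\eta$ for every $m$''. Now the same projection estimate gives $|\beta|\le 2\eta^2/\eta=2\eta$, hence $\|v_1^n+\alpha v_2^n\|\le 2\eta+\eta^2$ and $\angle(v_1^n,\pm v_2^n)\le 2(2\eta+\eta^2)<\gap^{-5}$, so Lemma~\ref{lem:12close} applies directly. The $\eta^2$ in the hypothesis $N-1\ge \tfrac{16}{\ep\eta^2}\log\gap$ is there precisely to feed Lemma~\ref{lem:far} when the spread guarantee is only $\eta^2$. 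With this two-scale split, no nested sub-case and no fallback rotation construction is needed.
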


\begin{proof}
For each $1\leq j<3$ and $1\leq n< \blocklength$ we recall that $\delta_{n,j}$ denotes the minimal distance between 
points in $E_{j,n}\cap S$ and $F_{j,n}\cap S$ and $\delta_n=\min_j\delta_{n,j}$.

We consider three partially overlapping cases that cover all situations:
\begin{enumerate}
    \item 
    There exists $1\le n<\blocklength$ such that $\delta_{n,2}< \eta$. 
    \item 
    There exists $1\le n<\blocklength$ such that $\delta_{n,1}\leq \eta^2$, and $\delta_{n,2}\ge \eta$ for every $1\leq n < \blocklength$.
    \item The block $\mr{M}_{\blocklength-1}, \dots, \mr{M}_1$ is $\eta^2$-spread. That is, $\delta_n>\eta^2$ for each $1\le n<\blocklength$. 
\end{enumerate}
In the first and third cases, Lemma~\ref{lem:build12gap13close} and Lemma~\ref{lem:far}, respectively, 
identify targets for $\mr{M}_{\blocklength-1}, \dots \mr{M}_1$, namely $\target{B}:=\target{M_{\blocklength-1}}\dots \target{M_{1}}$ satisfies $\frac{s_1(\target{B})}{s_2(\target{B})}\geq \reqgap$. 

To finish, we show how to identify a target block in the second case.
Let $1\leq n \leq \blocklength-1$ be such that
$\delta_{n,1}\leq \eta^2$, and let $\alpha v_2^n+\beta v_3^n\in \lin(v_2^n, v_3^n)\cap S$ be such that 
$\|v_1^n+\alpha v_2^n+\beta v_3^n\|\leq \eta^2$.
We wish to show $|\beta|\le 2\eta$.
If $\beta=0$, this is immediate. If $\beta\ne 0$, then we have $d(v_3^n,\lin(v_1^n,v_2^n))\le\eta^2/|\beta|$,
so using Lemma \ref{lem:linspacebound}, $\delta_{n,2}\le 2\eta^2/|\beta|$.
By assumption, $\delta_{n,2}\ge\eta$, so it follows that $|\beta|\le 2\eta$.


Hence, $\|v_1^n+\alpha v_2^n\|\leq 2\eta+\eta^2$.
Therefore, using Lemma~\ref{lem:linspacebound}, $\angle(v^n_1,v^n_2)\leq 2(2\eta+\eta^2)< \gap^{-5}$.
Thus Lemma~\ref{lem:12close} shows that either 
$\target{B}=M_{N-1}\cdots M_1$ or
$\target{B}=B_1=M_n\cdots M_1$ or $\target{B}=B_2=M_{\blocklength-1}\cdots M_{n+1}$ yield a
target for $\mr{M}_{\blocklength-1}, \dots \mr{M}_1$. 

\end{proof}

\begin{corollary}\label{cor:tar3}
Let $\gap>2$, $0<\ep<1$ and $\eta=\min\{\gap^{-9},\frac\ep4\}$.
Consider a product of  $3\times 3$ invertible matrices, $\mr{M_{\blocklength-1}},\ldots, \mr{M_{1}}$, with $\blocklength-1\geq \frac{16}{\ep \eta^2}  \log \gap$.
Then, there exist $1\le m\leq m'<N$ and a sequence 
$\target{M_m},\ldots,\target{M_{m'}}$ with the properties:
\begin{enumerate}
    \item $\target{M_n}$ is a near-identity perturbation of $M_n$ for each $m\le n\le m'$;
    \item The target $\target{B}:=\target{M_{m'}}\cdots\target{M_{m}}$ satisfies
    $$
\frac{s_2(\target{B})}
    {s_{3}(\target{B})}\ge \reqgap.
    $$    
\end{enumerate}
\end{corollary}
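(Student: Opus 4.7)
The plan is to obtain Corollary \ref{cor:tar3} as a direct consequence of Lemma \ref{lem:tar3}, via the complementarity mechanism already encoded in Lemma \ref{lem:complementarity}. Lemma \ref{lem:tar3} provides, under the stated length bound on $N$, a way to construct a $(1,2)$-gap target from any block of invertible $3\times 3$ matrices. Since $d=3$ and $j=1$ gives $d-j=2$, $d-j+1=3$, Lemma \ref{lem:complementarity} converts the $(1,2)$-gap-building procedure into a $(2,3)$-gap-building procedure.

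Concretely, given the block $M_1,\ldots,M_{N-1}$, I would form the reversed inverse block $\tilde M_n=M_{N-n}^{-1}$ for $1\le n\le N-1$. These are invertible (so Lemma \ref{lem:tar3} applies without any norm hypothesis), and the length $N-1$ satisfies the hypothesis of Lemma \ref{lem:tar3} with the same $\gap$ and $\eta=\min\{\gap^{-9},\ep/4\}$. Applying Lemma \ref{lem:tar3} to $\tilde M_1,\ldots,\tilde M_{N-1}$ produces $1\le\tilde m\le\tilde m'<N$ and near-identity perturbations $\target{\tilde M_{\tilde m}},\ldots,\target{\tilde M_{\tilde m'}}$ with
\begin{equation*}
\frac{s_1(\target{\tilde M_{\tilde m'}}\cdots\target{\tilde M_{\tilde m}})}{s_2(\target{\tilde M_{\tilde m'}}\cdots\target{\tilde M_{\tilde m}})}\ge \gap.
\end{equation*}

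Now set $m=N-\tilde m'$, $m'=N-\tilde m$ and $\target{M_l}=(\target{\tilde M_{N-l}})^{-1}$ for $m\le l\le m'$. If $\target{\tilde M_{N-l}}=\tilde R_{N-l}\tilde M_{N-l}$ or $\tilde M_{N-l}\tilde R_{N-l}$ with $\|\tilde R_{N-l}^{\pm 1}-I\|\le\ep/4$, then inversion yields $\target{M_l}=M_l R_l$ or $R_l M_l$ with $R_l=\tilde R_{N-l}^{-1}$, which still satisfies $\|R_l^{\pm 1}-I\|\le\ep/4$; so $\target{M_l}$ is a near-identity perturbation of $M_l$. Because
\begin{equation*}
\target{M_{m'}}\cdots\target{M_{m}}=\big(\target{\tilde M_{\tilde m'}}\cdots\target{\tilde M_{\tilde m}}\big)^{-1},
\end{equation*}
and for an invertible $3\times 3$ matrix $A$ we have $s_k(A^{-1})=1/s_{4-k}(A)$, it follows that
\begin{equation*}
\frac{s_2(\target{M_{m'}}\cdots\target{M_m})}{s_3(\target{M_{m'}}\cdots\target{M_m})}=\frac{1/s_2(\target{\tilde M_{\tilde m'}}\cdots\target{\tilde M_{\tilde m}})}{1/s_1(\target{\tilde M_{\tilde m'}}\cdots\target{\tilde M_{\tilde m}})}=\frac{s_1(\target{\tilde M_{\tilde m'}}\cdots\target{\tilde M_{\tilde m}})}{s_2(\target{\tilde M_{\tilde m'}}\cdots\target{\tilde M_{\tilde m}})}\ge \gap,
\end{equation*}
which is exactly the required $(2,3)$-gap for the target $\target B:=\target{M_{m'}}\cdots\target{M_m}$.

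There is no significant obstacle: the only points to verify are that the length hypothesis carries over unchanged (it does, because the bound $N-1\ge\frac{16}{\ep\eta^2}\log\gap$ depends only on $\ep$ and $\gap$), and that the near-identity perturbation class is closed under inversion (which is immediate from $\|R^{-1}-I\|\le\ep/4$). In fact, one could bypass the detailed verification entirely by noting that the argument just given is the content of Lemma \ref{lem:complementarity} applied with $d=3$, $j=1$, using Lemma \ref{lem:tar3} as the hypothesis that a $(1,2)$-gap can be built.
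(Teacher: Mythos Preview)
Your proposal is correct and follows exactly the paper's approach: the paper's proof simply says the result follows directly from Lemma~\ref{lem:tar3} and Lemma~\ref{lem:complementarity}, and you have unpacked precisely that argument in detail.
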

\begin{proof}
The result follows directly from Lemma~\ref{lem:tar3} and  Lemma~\ref{lem:complementarity}.
\end{proof}

\section{Abstract Targets}\label{S:abstract}
In this section, we show that one may find targets in all dimensions $d$, and for each pair $j,j+1$ of exponents 
with $1\le j\le d-1$. The proof shows that for all $0<\ep<1$ and all $\gap>0$, there exists an $N$
such that the target property is satisfied. However we do not give any upper bounds for $N$. The proof is
based on a contradiction argument. One supposes for a contradiction that for some $0<\ep<1$ and some $\gap>0$
there is no such $N$.
Then  there would be arbitrarily
long blocks of matrices for which no perturbation has $s_j(\cdot)/s_{j+1}(\cdot)$ exceeding $\gap$. 
One may then take a limit of these blocks to obtain a closed shift-invariant collection of sequences
of matrices with the property that for any perturbation of any sub-block, the ratio $s_j(\cdot)/s_{j+1}(\cdot)$ 
remains below $\gap$. In particular, when perturbing such a matrix cocycle, 
one gets that $\mu_j^{\ep/(4d)}=\mu_{j+1}^{\ep/(4d)}$. This construction is reminiscent
of the Furstenberg Correspondence Principle \cite{FurstenbergBook}, used in Additive Combinatorics, to relate questions about
configurations in positive density subsets of the integers with questions in dynamical systems. 
It also has some resemblance to the argument in the paper \cite{Bochi-Gourmelon} of Bochi and Gourmelon 
where a dynamical system was constructed to relate two non-dynamical statements. 
The conclusion of Lemma \ref{lem:compact}, however, contradicts Theorem \ref{thm:simp} in which we show that additive noiselike
perturbations of cocycles always have simple Lyapunov spectrum. Unlike in the previous section, we give no information on how the targets
are constructed. Rather, we infer their existence from the contradiction and compactness argument described above.

In this section, we consider an invertible ergodic measure-preserving transformation, $\sigma$, of 
a probability space $(\Omega,\rho)$.
If $A\colon \Omega\to\text{Mat}_d(\RR)$ is a map, we write
$A^{(n)}(\om)=A(\sigma^{n-1}\omega)\cdots A(\omega)$. We let $S_\infty=\{M\in\text{Mat}_d(\RR)\colon
|M|_\infty\le 1\}$ and let $S_\infty$ be equipped with the uniform measure, so that each entry is 
uniformly distributed in the range $[-1,1]$ and distinct entries are independent. 
We also use the notation $S$ to denote $\{M\in\text{Mat}_d(\RR)\colon \|M\|\le 1\}$. 
As before, we denote by $\PP$ the measure on $S_\infty^\ZZ$ where the matrices in the sequence
are mutually independent and each is distributed uniformly as described above. 
Finally let $\bar\Omega=\Omega\times S_\infty^\ZZ$ and we equip $\bar\Omega$ with the  
measure $\rho\times \PP$, invariant under $\bar\sigma:=\sigma\times\mathsf{shift}$. 
Given $(\omega,\zeta)\in \Omega\times S_\infty^\ZZ$, we write
$A_\ep(\om,\zt)=A(\om)+\ep\zt_0$ (that is, we perturb $A(\om)$ by $\ep$ times the zeroth matrix in
the sequence $\zt$). We then write $A_\ep^{(n)}(\om,\zt)=A_\ep(\bar\sigma^{n-1}(\om,\zt))\cdots A_\ep(\om,\zt)$. 
Since $\rho\times\PP$ is ergodic, $\lim_{n\to\infty}\frac 1n\log s_j(A_\ep^{(n)}(\om))$ exists $\rho\times\PP$-a.e.\ and 
is almost surely equal to a constant that we call $\mu^\ep_j$. As usual, we let $\lambda^\ep_1>\ldots>\lambda^\ep_k$ be
the distinct almost-sure constant values taken by $\mu^\ep_j$ and we let $m^\ep_j$ be the multiplicity of $\lambda^\ep_j$.

\begin{lemma}[Compactness]\label{lem:compact}
Suppose that for some $0<\epsilon<1$, for all $\delta>0$, there exists a sequence
of $d\times d$ matrices $(A_i)$ of norm at most 1 and $1\le j<d$, such that 
$$
\liminf \frac 1n\log\frac{s_j(A_\ep^{(n)})}{s_{j+1}(A_\ep^{(n)})}<\delta\text{ a.s.}
$$
Then there exists an ergodic invariant measure on $S^\ZZ$
such that 
$\mu_j^{\ep/(4d)}=\mu_{j+1}^{\ep/(4d)}$.
\end{lemma}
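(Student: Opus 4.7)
The plan is a Furstenberg correspondence-style argument: from hypothetical sequences producing vanishing asymptotic $(j,j+1)$-gaps after $\ep$-perturbation, I would extract weak-$*$ subsequential limits of shift-empirical measures on the compact space $S^\ZZ$, and then show that the limiting cocycle after $\frac{\ep}{4d}$-perturbation has $\lambda_j^{\ep/(4d)}=\lambda_{j+1}^{\ep/(4d)}$.

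For each $k\in\NN$ set $\delta_k=1/k$; the hypothesis supplies a sequence $(A^{(k)}_i)_{i\in\NN}\subset S$ and an index $j_k\in\{1,\ldots,d-1\}$ for which the liminf is $<\delta_k$ almost surely. Pigeonhole over the finite value set reduces to a common $j$ along a subsequence. Extend each $A^{(k)}$ to $S^\ZZ$ by identity-padding at non-positive indices, and form the shift-empirical measures $\mu^{(k)}_n:=\frac{1}{n}\sum_{i=0}^{n-1}\delta_{\sigma^i A^{(k)}}$. Since $S^\ZZ$ is compact in the product topology, Krylov--Bogolyubov yields subsequential weak-$*$ shift-invariant limits $\mu^{(k)}_\infty$ as $n\to\infty$. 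A diagonal extraction in $k$ then produces a shift-invariant $\mu$ on $S^\ZZ$, and its ergodic decomposition provides the candidate $\mu_{\mathrm{erg}}$.

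To show the gap vanishes at $\mu$ (which, by non-negativity of gaps over ergodic components, suffices to yield an ergodic component $\mu_{\mathrm{erg}}$ with the claimed property), I invoke Kingman's subadditive ergodic theorem: the partial sum $\sum_{i\le j}\lambda_i^{\ep/(4d)}(\nu)$ equals $\inf_n \tfrac{1}{n}\int\log S_1^j(A_{\ep/(4d)}^{(n)}(\om,\zt))\,d(\nu\otimes\PP)$, which is the infimum of bounded continuous functionals of $\nu$ (boundedness from Corollary \ref{cor:SLLN applies}-type tail bounds produced by the $\frac{\ep}{4d}$-noise, continuity from dependence on finitely many coordinates of $\om$). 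Hence $\nu\mapsto\sum_{i\le j}\lambda_i^{\ep/(4d)}(\nu)$ is weak-$*$ upper semicontinuous, giving
\begin{align*}
\lambda_j^{\ep/(4d)}(\mu)-\lambda_{j+1}^{\ep/(4d)}(\mu)\;\le\;\limsup_{k}\bigl(\lambda_j^{\ep/(4d)}(\mu^{(k)}_\infty)-\lambda_{j+1}^{\ep/(4d)}(\mu^{(k)}_\infty)\bigr).
\end{align*}

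The main obstacle, and the reason for the factor $\frac{1}{4d}$, is then converting the hypothesis bound on the $\ep$-noise gap along $A^{(k)}$ into a bound on the right-hand side. The natural route is a coupling: the uniform law on the $|\cdot|_\infty$-ball of radius $\ep$ decomposes as a convex combination of the uniform law on the sub-ball of radius $\frac{\ep}{4d}$ (with weight $(4d)^{-d^2}$) and a residual supported on the complement. The specific ratio $\frac{1}{4d}$ is chosen so that perturbations from the sub-ball have operator norm at most $\ep/4$—the near-identity perturbation scale used throughout the paper—ensuring the residual tail can be controlled using the exponential tail bound of Lemma \ref{gluing lemma} and the integrability provided by Corollary \ref{cor:SLLN applies}. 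The hard technical point I expect to wrestle with is quantifying this coupling so that the block-wise errors aggregate as genuine lower-order contributions rather than blowing up exponentially in $n$, so that after letting $n\to\infty$ and then $k\to\infty$ (invoking $\delta_k\to 0$) the upper bound on the limit gap genuinely collapses to $0$, yielding the desired equality of exponents for $\mu_{\mathrm{erg}}$.
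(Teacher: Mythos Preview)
Your approach has two genuine gaps, and it misses the mechanism that makes the paper's argument work.

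First, the semicontinuity step does not go through as written: upper semicontinuity of each partial sum $\nu\mapsto\sum_{i\le j}\lambda_i^{\ep/(4d)}(\nu)$ does \emph{not} yield upper semicontinuity of the gap, since $\lambda_j-\lambda_{j+1}=2\sum_{i\le j}\lambda_i-\sum_{i\le j-1}\lambda_i-\sum_{i\le j+1}\lambda_i$ involves partial sums with both signs. Differences of u.s.c.\ functions are generally neither u.s.c.\ nor l.s.c., so the displayed limsup inequality for the gap is unjustified.

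Second, and more fundamentally, the coupling step you flag as ``hard'' is not just technical---I do not see a route to making it work. You need to infer smallness of the $\tfrac{\ep}{4d}$-noise gap from smallness of the $\ep$-noise gap, but there is no monotonicity in the noise level (the whole paper is about noise \emph{creating} gaps). Your mixture decomposition of $U[-\ep,\ep]^{d^2}$ with a $U[-\tfrac{\ep}{4d},\tfrac{\ep}{4d}]^{d^2}$ component of weight $(4d)^{-d^2}$ does not help: to compare the two perturbed products at time $n$ you would need \emph{all} $n$ matrices to draw from the inner component simultaneously, an event of probability $(4d)^{-nd^2}$.

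The paper sidesteps both issues by passing through the contrapositive of Corollary~\ref{cor:metareform}. Failure of a uniform positive lower bound on the $\ep$-gap means the target-finding hypothesis of that corollary fails: there is some $\gap>0$ such that for every $N$ one can find a block $M_1,\ldots,M_N$ (with $\|M_i\|\le 1-\tfrac\ep2$, $s_d(M_i)\ge\tfrac\ep2$) for which \emph{every} $|\cdot|_\infty$-perturbation of size $\tfrac{\ep}{4d}$ of \emph{every} sub-block has $s_j/s_{j+1}\le\gap$. This is a deterministic, universally quantified bound on singular-value ratios, and it passes to subsequential limits of these blocks in the compact space $S^\ZZ$. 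On the resulting closed shift-invariant set $\Omega$, \emph{every} $\tfrac{\ep}{4d}$-perturbed finite product has $s_j/s_{j+1}\le\gap$, so for any ergodic measure supported there (obtained via Krylov--Bogolyubov and ergodic decomposition) one gets $\lambda_j^{\ep/(4d)}=\lambda_{j+1}^{\ep/(4d)}$ immediately---no semicontinuity of gaps, no coupling. The factor $\tfrac{1}{4d}$ is not a coupling parameter at all; it is precisely the target-perturbation scale built into Corollary~\ref{cor:metareform}.
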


\begin{proof}
Notice that the hypothesis of the lemma implies that the conclusion of Corollary 
\ref{cor:metareform} fails, so that the hypothesis of Corollary \ref{cor:metareform}
must fail also. 
Hence we deduce that there exists $\gap>0$ such that for all $N$, there exists a block $M_1,\ldots,M_N$ of matrices
with the properties:
\begin{itemize}
\item $\|M_i\|\le 1-\frac\ep 2$ for each $i$;
\item $s_d(M_i)\ge \frac\ep 2$ for each $i$;
\item for all $1\le m\leq m'\le N$ and all $M'_{m},\ldots,M'_{m'}$ such that
$|M'_i-M_i|_\infty\le \frac\ep{4d}$ for each $m\le i\le m'$, one has 
$$
\frac {s_j(M'_{m'}\cdots M'_m)}{s_{j+1}(M'_{m'}\cdots M'_m)}\le \gap.
$$
\end{itemize}
For each $N$, let $M_{N,1},\ldots,M_{N,2N+1}$
be such a block with the above property, of length $2N+1$, and build an element $A_N$ of $S^\ZZ$ by
$A_N=(A_{N,i})_{i\in\ZZ}$ by 
$$
A_{N,i}=
\begin{cases} 
M_{N,i+N+1}&\text{if $-N\le i\le N$;}\\
I&\text{otherwise.}
\end{cases}
$$
Equip $S$ 
with the norm topology induced by $\|\cdot\|$ and $S^\ZZ$ with the product topology. Then
$S$ is compact and so is $S^\ZZ$. 
Let $A=(A_i)_{i\in\ZZ}$ be a sub-sequential limit of $A_N=(A_{N,i})_{i\in\ZZ}$.
That is, there is a sequence $(N_q)$ converging to $\infty$
such that $A_i=\lim_{q\to\infty}A_{N_q,i}$ for each $i$.
Then let $-\infty<m\leq m'<\infty$ and let $B_m,\ldots,B_{m'}$ be an arbitrary perturbation 
such that $|B_i-A_i|_\infty\le \frac\ep{4d}$ for each $i$.  
Let $B_{q,i}=A_{N_q,i}+(B_i-A_i)$ so that $B_{q,i}\to B_i$ for
each $m\le i\le m'$ and $|B_{q,i}-A_{N_q,i}|_\infty\le\frac\ep{4d}$ for each $m\le i\le m'$. 
By the choice of the $A_N$'s and the assumption, we have that for all sufficiently large $q$, 
$$
\frac{s_j(B_{q,m'}\cdots B_{q,m})}{s_{j+1}(B_{q,m'}\cdots B_{q,m})}\le \gap.
$$
By continuity of singular values, 
$$
\frac {s_j(B_{m'}\cdots B_m)}{s_{j+1}(B_{m'}\cdots B_m)}\le \gap.
$$
Let $S_0=\{M\in\text{Mat}_d(\RR)\colon \|M\|\le 1-\frac\ep 2\text{ and }s_d(M)\ge \frac\ep 2\}$ and set
$$
\Omega=\{A\in S_0^\ZZ\colon q_j(B_{m'}\cdots B_m)\le\gap 
\text{ for all $m<m'$ whenever }|B_i-A_i|_\infty\le \tfrac\ep {4d},\ \forall i\},
$$
where $q_j(M)=s_j(M)/s_{j+1}(M)$ as before.
The arguments above show that $\Omega$ is a non-empty closed, and hence
compact, shift-invariant subset of $S_0^\ZZ$. By the Krylov-Bogoliubov theorem,
there exists a shift-invariant measure supported on $\Omega$. Taking an ergodic
component, we obtain an ergodic invariant measure supported on $\Omega$. 

Now for every $\om\in\Omega$ and every sequence $(\Xi_i)$ in $S_\infty^\ZZ$,
by definition 
$$
\frac{s_j\big((A(\sigma^{n-1}\omega)+\tfrac\ep {4d}\Xi_{n-1})\cdots (A(\om)+\tfrac\ep {4d}\Xi_0)\big)}
{s_{j+1}\big((A(\sigma^{n-1}\omega)+\tfrac\ep {4d}\Xi_{n-1})\cdots (A(\om)+\tfrac\ep {4d}\Xi_0)\big)}\le\gap.
$$
It follows that $\mu_j^{\ep/(4d)}=\mu_{j+1}^{\ep/(4d)}$.
\end{proof}

The following result is essentially well known (see for example \cite{Bochi}). We include the proof for completeness.

\begin{lemma}[(Semi-)continuity]\label{lem:semi-cont}
Let $\sigma$ be an invertible ergodic measure-preserving transformation of a probability space $(\Omega,\rho)$
and let $A\colon \Omega\to\mathrm{Mat}_m(\RR)$ be a cocycle of matrices such that $\|A(\om)\|^{-1}\le c$ a.e.\ 
and $\int\log\|A(\om)\|\,d\rho<\infty$. 
Suppose $A$ has trivial Lyapunov spectrum. That is,
there exists $\lambda$ such that $\lim_{n\to\infty}\frac 1n\log s_j(A^{(n)}(\om))=\lambda$ a.e.\ for
$j=1,\ldots,m$. Then for all $\ep>0$, there exists a $\delta>0$ with the following property:

Let $\bar\sigma\colon (\bar\Omega,\bar\rho)\to (\bar\Omega,\bar\rho)$ be an ergodic invertible extension of 
$\sigma\colon (\Omega,\rho)\to (\Omega,\rho)$ with factor map $\pi$.
If $B\colon\bar\Omega\to\text{Mat}_m(\RR)$ 
satisfies $\|B(\bar\om)-A(\pi(\bar\om))\|\le \delta$ 
for $\bar\rho$-a.e.\ $\bar\om$, then all Lyapunov exponents of the cocycle $B$ lie in the
interval $(\lambda-\ep,\lambda+\ep)$. 
\end{lemma}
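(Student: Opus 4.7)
The plan is to reduce the lemma to two standard ingredients about the Lyapunov partial sums $\chi_j(M) := \lambda_1(M)+\cdots+\lambda_j(M)$, recognized via the exterior-power identity $\chi_j(M) = \lim_{n\to\infty} \frac{1}{n}\log \|(M^{(n)}(\bar\om))^{\wedge j}\|$: namely, (i) upper semi-continuity of each $\chi_j$ under $L^\infty$ perturbations of the cocycle, and (ii) near-continuity of the total sum $\chi_m(M) = \int \log |\det M|\,d\bar\rho$ on cocycles bounded away from singularity. Since the trivial-spectrum hypothesis forces $\chi_j(A) = j\lambda$ for every $j$, once both bounds are in hand a short arithmetic squeeze will confine every individual exponent of $B$ to $(\lambda-\ep,\lambda+\ep)$.

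First I would lift $A$ to $\bar A := A\circ\pi$, which has the same trivial Lyapunov spectrum as $A$ because $\pi$ is a measure-preserving factor map of ergodic systems; one may then work entirely on $(\bar\Omega,\bar\rho,\bar\sigma)$, where $\|B-\bar A\|_\infty\le\delta$. By Kingman's subadditive ergodic theorem applied to $n\mapsto\log\|(M^{(n)})^{\wedge j}\|$ together with Fekete's lemma,
\begin{align*}
\chi_j(M) \;=\; \inf_{n\ge 1}\;\tfrac{1}{n}\int \log \|(M^{(n)}(\bar\om))^{\wedge j}\|\,d\bar\rho.
\end{align*}
For each fixed $n$ the map $M\mapsto \tfrac{1}{n}\int\log\|(M^{(n)})^{\wedge j}\|\,d\bar\rho$ is continuous in $L^\infty$ on cocycles of uniformly bounded norm and uniformly bounded away from singularity, so $\chi_j$ is an infimum of continuous functionals, hence upper semi-continuous. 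Setting $\ep':=\ep/(2m)$, for each $1\le j\le m-1$ I would pick $n_j$ with $\tfrac{1}{n_j}\int\log\|(\bar A^{(n_j)})^{\wedge j}\|\,d\bar\rho<j\lambda+\ep'/2$, and then $\delta_j>0$ so small that $\|B-\bar A\|_\infty<\delta_j$ yields $\chi_j(B)<j\lambda+\ep'$.

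The total-sum bound uses the cocycle identity $\chi_m(M)=\int\log|\det M|\,d\bar\rho$. Reading the hypothesis as $\|A(\om)^{-1}\|\le c$ a.e., I have $|\det A|\ge c^{-m}$ a.e.; for $\delta\le 1/(2c)$ the perturbed matrix $B$ is also a.e.\ invertible with $\|B^{-1}\|\le 2c$, so $\log|\det B|$ is uniformly bounded and continuous in $B$, giving $|\chi_m(B)-m\lambda|<\ep'$ for $\delta$ small. Taking $\delta$ below all the $\delta_j$'s and this last threshold yields $\lambda_1(B)\le\chi_1(B)<\lambda+\ep'$ and
\begin{align*}
\lambda_m(B) \;=\; \chi_m(B)-\chi_{m-1}(B) \;>\; (m\lambda-\ep') - ((m-1)\lambda+\ep') \;=\; \lambda-2\ep',
\end{align*}
so every exponent of $B$ lies in $[\lambda_m(B),\lambda_1(B)]\subset(\lambda-\ep,\lambda+\ep)$, as required. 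The main obstacle is the continuity step at fixed $n$: the Lipschitz constant of $M\mapsto\log\|(M^{(n)})^{\wedge j}\|$ grows like $n\|M\|^{n-1}$ and blows up near singular matrices, but both are controlled once the uniform upper bound $\|B\|_\infty\le\|\bar A\|_\infty+\delta$ and the uniform lower bound $s_m(B)\ge 1/(2c)$ are in force, with an $L^1$ envelope for dominated convergence supplied by the integrability hypothesis.
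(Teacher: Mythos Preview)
Your proof is correct and rests on the same core mechanism as the paper's: express the relevant Lyapunov quantities as infima over $n$ via Kingman's theorem, then use continuity of the $n$th-stage integral (handled by dominated convergence, as you correctly flag at the end) to deduce upper semi-continuity. The only substantive difference is in how you bound $\lambda_m(B)$ from below. You decompose $\lambda_m(B)=\chi_m(B)-\chi_{m-1}(B)$, controlling $\chi_{m-1}$ via the $(m-1)$-fold exterior power and $\chi_m$ via the continuity of $\int\log|\det|$ on cocycles with $s_m$ bounded below. The paper instead applies the same subadditive/semi-continuity argument directly to $n\mapsto\log\|(A^{(n)}(\om))^{-1}\|$, reading off $-\lambda_m(B)$ as the top exponent of the inverse cocycle; this avoids exterior powers and determinants altogether and needs only the two functionals $f_\delta,g_\delta$. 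Your route has the advantage of generalizing immediately to control all partial sums $\chi_j$, while the paper's is slightly more economical for the case at hand.

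One point you should make explicit: the lemma asks for $\delta$ depending only on $A$ and $\ep$, chosen \emph{before} the extension $(\bar\Omega,\bar\rho)$ is given. Your $n_j$'s are fine since $\int\log\|(\bar A^{(n)})^{\wedge j}\|\,d\bar\rho=\int\log\|(A^{(n)})^{\wedge j}\|\,d\rho$; for the $\delta_j$'s you should note (as the paper does by defining $f_\delta,g_\delta$ on $\Omega$) that the pointwise upper bound on $\log\|(B^{(n_j)})^{\wedge j}\|$ and its $L^1$ envelope are functions of $\pi(\bar\om)$ alone, so the dominated-convergence threshold is independent of the extension.
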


The proof here is essentially a semi-continuity result (that holds for arbitrary cocycles), and the additional 
assumption that the unperturbed Lyapunov spectrum is trivial yields the continuity result. 

\begin{proof}
    By the Multiplicative ergodic theorem and the Kingman sub-additive ergodic theorem, 
    \begin{align*}
        \lambda&=\lim_{n\to\infty}\frac 1n\int\log\|A^{(n)}(\om)\|\,d\rho
        =\inf_n\frac 1n\int\log\|A^{(n)}(\om)\|\,d\rho\text{\quad and}\\
        -\lambda&=\lim_{n\to\infty}\frac 1n\int\log\|(A^{(n)}(\om))^{-1}\|\,d\rho
        =\inf_n \frac 1n\int\log\|(A^{(n)}(\om))^{-1}\|\,d\rho.
\end{align*}
    Let $n$ be such that 
    \begin{align*}
        &\frac 1n\int\log\|A^{(n)}(\om)\|\,d\rho<\lambda+\ep\text{ and}\\
        &\frac 1n\int\log\|(A^{(n)}(\om))^{-1}\|\,d\rho<-\lambda+\ep.
    \end{align*}
    Define 
    \begin{align*}
    f_\delta(\om)&=\max_{\|\Xi_0\|,\ldots,\|\Xi_{n-1}\|\le \delta}\log\Big\|(A(\sigma^{n-1}(\om))+\Xi_{n-1})\cdots
    (A(\om)+\Xi_0)\Big\|\text{ and}\\
    g_\delta(\om)&=\max_{\|\Xi_0\|,\ldots,\|\Xi_{n-1}\|\le \delta}\log\Big\|\big((A(\sigma^{n-1}(\om))+\Xi_{n-1})\cdots
    (A(\om)+\Xi_0)\big)^{-1}\Big\|.
    \end{align*}
    Let $\delta<\min(1,\frac 1{2c})$. For $0<\ep<\delta$ and $\Xi$ such that $\|\Xi\|\le 1$ and $A$ such that $s_d(A)\ge \frac 1c$,
    we have $(\frac 1c-\frac 1{2c})\|x\|\le \|(A+\ep\Xi)x\|\le ((\max(\|A\|,1)+1)\|x\|$.
    Since $\log(\max(t,1)+1)\le \log^+t+1$, the right inequality gives
    that  $\log\|(A+\ep\Xi)\|\le \log^+\|A\|+1$. The left inequality gives $\log\|(A+\ep\Xi)^{-1}\|\le \log(2c)$.
    This yields $f_\delta(\om)\le \sum_{i=0}^{n-1}(1+\log^+\|A(\sigma^i\om)\|)$
    and $g_\delta(\om)\le n\log {2c}$. Since $\|A\|\,\|A\|^{-1}\ge 1$, we also obtain
    $f_\delta(\om)+g_\delta(\om)\ge 0$, giving the bounds
    \begin{align*}
    &n\log\tfrac{1}{2c}\le f_\delta(\om)\le \sum_{i=0}^{n-1}(1+\log^+\|A(\sigma^i\om)\|)\text{ and}\\
    &-\sum_{i=0}^{n-1}(1+\log^+\|A(\sigma^i\om)\|)\le g_\delta(\om)\le  n\log(2c).
    \end{align*}

    Also $f_\delta(\om)
    \to\log\|A^{(n)}(\om)\|$ and $g_\delta(\om)\to \log\|(A^{(n)}(\om))^{-1}\|$ for each $\om\in\Omega$ as $\delta\to 0$. 
    Hence by dominated convergence, for all sufficiently small $\delta$, $\frac 1n\int f_\delta\,d\rho<\lambda+\ep$
    and $\frac 1n\int g_\delta\,d\rho<-\lambda+\ep$. 
    
    Now let $(\bar\Omega,\bar \rho)$ be an extension of $(\Omega,\rho)$ and let $B$ be a cocycle on $\bar\Omega$
    such that $\|B(\bar\om)-A(\pi(\bar\om))\|\le \delta$ a.e.
    Then by definition $\log \|B^{(n)}(\bar\om)\|\le f_\delta(\pi(\bar\om))$ and
    $\log\|(B^{(n)}(\bar\om)^{-1}\|\le g_\delta(\pi(\bar\om))$.
    It follows that $\lambda_1(B)\le \lambda+\ep$ and $\lambda_m(B)\ge -\frac 1n\int g_\delta(\pi(\bar\om))\,d\rho\ge
    \lambda-\ep$ as required.
\end{proof}

\begin{theorem}[Simplicity]\label{thm:simp}
Let $\sigma$ be an invertible ergodic measure-preserving transformation of a probability space $(\Omega,\rho)$
and let $A\colon\Omega\to\text{Mat}_d(\RR)$ 
such that $\|A(\omega)\|\leq 1$ for every $\omega\in\Omega$. Then for all 
$0<\ep<1$, for all $1\le j<d$, and $\rho\times\mathbb{P}$-almost surely,
$$
\lim_{n\to\infty}\frac 1n\log \frac {s_j(A_\ep^{(n)}(\om))}{s_{j+1}(A_\ep^{(n)}(\om))}>0.
$$
That is $\lambda_j^\ep>\lambda_{j+1}^\ep$ for each $j$: the noiselike perturbation of an arbitrary
bounded matrix cocycle has simple Lyapunov spectrum.
\end{theorem}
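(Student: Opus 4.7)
The plan is a proof by contradiction combining Lemma~\ref{lem:compact} (compactness) with Lemma~\ref{lem:semi-cont} (semi-continuity), proceeding by induction on $d$. First I would invoke the multiplicative ergodic theorem applied to the cocycle $A_\ep$ over the ergodic system $(\bar\Omega,\bar\sigma,\rho\times\PP)$: this gives that $\frac 1n\log s_j(A_\ep^{(n)})$ converges $\rho\times\PP$-a.s.\ to the constant $\mu_j^\ep$. Consequently the $\liminf$ in the statement is almost surely equal to the deterministic gap $\mu_j^\ep-\mu_{j+1}^\ep$, and it suffices to show that this gap is strictly positive for every $1\le j<d$.

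Suppose for contradiction that $\mu_j^\ep=\mu_{j+1}^\ep$ for some $j$ and some $0<\ep<1$. By Fubini, for $\rho$-almost every $\om\in\Omega$ the deterministic sequence $(A(\sigma^n\om))_{n\in\NN}$ has $\liminf_n \frac 1n\log(s_j(A_\ep^{(n)})/s_{j+1}(A_\ep^{(n)}))=0$ $\PP$-almost surely, so the hypothesis of Lemma~\ref{lem:compact} is satisfied for every $\delta>0$. That lemma then produces an ergodic shift-invariant probability measure on $S^\ZZ$ whose associated shift cocycle $A^*$ satisfies $\lambda_j^{\ep/(4d)}(A^*)=\lambda_{j+1}^{\ep/(4d)}(A^*)$.

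I would close the argument by induction on $d$, the base case $d=2$ being covered by Theorem~\ref{mt:2x2}, which yields a strictly positive gap for every bounded sequence (hence for every ergodic bounded cocycle, by considering orbits). For the inductive step, restrict the perturbed cocycle $A^*_{\ep/(4d)}$ to the Oseledets subbundle $V$ corresponding to the shared exponent $\lambda^*$, of multiplicity $r\ge 2$. After an equivariant measurable change of frame and rescaling by $e^{-\lambda^*}$, this restriction becomes an ergodic $r$-dimensional cocycle of norm at most $1$ with trivial Lyapunov spectrum. Lemma~\ref{lem:semi-cont} then pins the Lyapunov exponents of any sufficiently small additive perturbation of this $r$-dimensional cocycle to an arbitrarily narrow window about $\lambda^*$, while the inductive hypothesis forces any genuine noise-like perturbation of an $r$-dimensional bounded ergodic cocycle to have a simple spectrum. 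Choosing the amplitude of the additional noise small enough for Lemma~\ref{lem:semi-cont} to yield a window of width smaller than the simplicity gap supplied by the inductive hypothesis produces the required contradiction.

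The principal obstacle is making the reduction to the Oseledets subbundle rigorous: since the splitting of $A^*_{\ep/(4d)}$ depends measurably on the noise $\zt$, the restricted sub-cocycle lives over the enlarged base $S^\ZZ\times S_\infty^\ZZ$, and one must verify that an additional independent uniform matrix noise applied to the ambient $d$-dimensional cocycle projects onto the $r$-dimensional restriction as a perturbation with density bounded away from $0$ and $\infty$---the setting under which the inductive statement applies, as indicated in the remark following Theorem~\ref{mt:dxd2}. A secondary technicality is controlling the rescaling when $\lambda^*$ is strongly negative, which should be handled using the uniform lower bound $s_d(\om^*_0)\ge \ep/2$ built into $\Omega$ during the construction in Lemma~\ref{lem:compact}.
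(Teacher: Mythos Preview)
Your approach differs substantially from the paper's and, as written, does not close.

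The paper proves Theorem~\ref{thm:simp} directly, without invoking Lemma~\ref{lem:compact}. It constructs, by a finite inductive procedure on the number of repeated Lyapunov exponents, an ergodic extension $\bar\Omega$ of $\Omega$ and a cocycle $\bar A$ with $\|\bar A(\bar\om)-A(\pi(\bar\om))\|\le\frac{3\ep}4$ such that $\bar A$ has simple Lyapunov spectrum. At each stage one fixes a single Oseledets block of multiplicity $m>1$, works on a ``good set'' where the complementary splitting has angle bounded below, and perturbs only the restriction to that block using the target mechanism of Lemma~\ref{lem:far} and Lemma~\ref{lem:closed2}; Lemma~\ref{lem:semi-cont} is used to keep the new exponents inside $(\lambda_{j+1},\lambda_{j-1})$, so previously separated exponents stay separated. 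Once $\bar A$ with simple spectrum is in hand, one uses its finite blocks as targets for the genuine noise cocycle $A_\ep$ and runs the bookkeeping of Theorem~\ref{thm:meta}.

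Your inductive step has two problems, one of which is fatal. First, the restriction of $A^*_{\ep/(4d)}$ to its Oseledets bundle $V$ is a cocycle over the enlarged base $S^\ZZ\times S_\infty^\ZZ$, but it is \emph{not} of the form ``bounded deterministic cocycle plus i.i.d.\ uniform noise''; the bundle $V$ already depends on the noise, and expressing the restriction in a measurable orthonormal frame for $V$ need not give a bounded cocycle (the angle between $V$ and its complement is not bounded away from $0$). You flag this, but the proposed fix---projecting \emph{additional} ambient noise onto $V$---changes the object under study: it tells you about a further-perturbed cocycle, not about $A^*_{\ep/(4d)}$ itself, so it cannot contradict $\lambda_j^{\ep/(4d)}=\lambda_{j+1}^{\ep/(4d)}$.

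Second, and more simply, the closing ``contradiction'' is not one. Given an $r$-dimensional cocycle $C$ with trivial spectrum, the inductive hypothesis says $C+\delta\Xi$ has \emph{some} positive gap $g(\delta)$, while Lemma~\ref{lem:semi-cont} says all exponents of $C+\delta\Xi$ lie in a window of width $\epsilon(\delta)$. Since those exponents lie in that window, necessarily $g(\delta)\le\epsilon(\delta)$; you cannot choose $\delta$ so that the window is smaller than the gap. There is no inconsistency between ``distinct'' and ``close together''. The paper uses Lemma~\ref{lem:semi-cont} in exactly the opposite direction: not to forbid simplicity, but to guarantee that the small tie-breaking perturbation does not create \emph{new} coincidences with the exponents outside the block.
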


\begin{proof}
The proof is divided in three steps.

\textsl{Step 0: Initial perturbation}\\
Let $\sigma$ be an ergodic invertible measure-preserving
transformation of $(\Omega,\rho)$ and $A\colon\Omega\to\text{Mat}_d(\RR)$,
such that $\|A(\omega)\|\leq 1$ for every $\omega\in\Omega$.
Let $0<\ep<1$ and let $1\le j\le d-1$ be fixed. We first apply 
Lemma \ref{lem:nonsinginit} to obtain a cocycle $A_0$ where 
$\|A_0(\om)-A(\om)\|\le\frac\ep 2$, $\|A_0(\om)\|\le 1-\frac\ep 2$ 
and $s_d(A_0(\om))\ge \frac\ep 2$ for each $\om\in\Omega$. 

\textsl{Step 1: A nearby extension with simple Lyapunov spectrum}\\
In this step, we find an extension $\Omega'$ of $\Omega$, together with a factor map $\pi:\Omega'\to\Omega$
and a cocycle $A'$ on $\Omega'$ such that $\|A'(\omega')-A(\pi(\omega'))\|<\frac {3\ep}4$ such that
the cocycle $A'$ has simple Lyapunov spectrum.

We inductively build a sequence of extensions of $\Omega$, each one breaking a tie between
a pair of Lyapunov exponents at the previous level, taking care not to create any new ties.
We let $\Omega_0=\Omega$. Then we build an extension $\Omega_1$ of $\Omega_0$ with a new cocycle $A_1$; 
an extension $\Omega_2$ of $\Omega_1$ with a cocycle $A_2$ etc. 

More precisely, we claim the following:
There exist $0\le K\le d-1$, spaces $\Omega_0,\ldots,\Omega_K$; transformations
$\sigma_0,\ldots,\sigma_K$; probability measures $\rho_0,\ldots,\rho_K$; 
factor maps $\pi_1,\ldots,\pi_K$ and cocycles $A_0,\ldots,A_K$ defined 
on $\Omega_0,\ldots,\Omega_K$ such that:
\begin{itemize}
    \item The map $\sigma_k$ is an invertible ergodic measure-preserving transformation of $(\Omega_k,\rho_k)$
    for each $k=0,\ldots,K$;
    \item For each $1\le k\le K$ the map $\pi_k$ is a factor map from $(\Omega_k,\rho_k)$ to $(\Omega_{k-1},\rho_{k-1})$: 
    $\rho_k(\pi_k^{-1}B)=\rho_{k-1}(B)$ for all measurable subsets $B$ of $\Omega_{k-1}$ and
    $\sigma_{k-1}\circ \pi_k=\pi_k\circ\sigma_k$;
    \item Let the Lyapunov exponents of the cocycle $A_k$ over $\Omega_k$ \emph{with repetition} be
    $\mu_{k,1}\ge\ldots\ge \mu_{k,d}$. For any $1\le k\le K$, if $\mu_{k-1,j}>\mu_{k-1,j+1}$, then 
    $\mu_{k,j}>\mu_{k,j+1}$. Further there exists at least one $j$ such that $\mu_{k-1,j}=\mu_{k-1,j+1}$
    while $\mu_{k,j}>\mu_{k,j+1}$;
    \item Each cocycle is a small perturbation (not necessarily noiselike)
    of the previous one: $\|A_k(\om)-A_{k-1}(\pi_k(\om))\|\le\frac\ep{4d}$
    for each $1\le k\le K$;
    \item The $K$th cocycle has simple Lyapunov spectrum: $\mu_{K,1}>\ldots>\mu_{K,d}$.
\end{itemize}

For the base case, $\Omega_0=\Omega$, $\rho_0=\rho$ and we already constructed $A_0$. 
Now suppose $\Omega_0,\ldots,\Omega_k$; $\rho_0,\ldots,\rho_k$; $\sigma_0,\ldots,\sigma_k$ and
$A_0,\ldots,A_k$ have already been constructed to satisfy the above conditions. 

Let the Lyapunov exponents of the $A_k$ cocycle over $\sigma_k$
\emph{without repetition} be $\lambda_{k,1}>\ldots>\lambda_{k,l_k}$
with multiplicities $m_{k,1},\ldots,m_{k,l_k}$.
If the cocycle $A_k$ over $\sigma_k$ has simple Lyapunov spectrum, we let $K=k$ and the induction is complete. 
Otherwise, fix a $j$ such that $m_{k,j}>1$. 
Applying the Multiplicative Ergodic Theorem, 
let $V(\om)$ be the Oseledets space corresponding to the $j$th Lyapunov exponent of the $A_k$ cocycle,
and $U(\om)$ be the direct sum of all of the other Oseledets spaces. For $\PP_k$-a.e.\ $\om$,
$V(\om)$ and $U(\om)$ have trivial intersection and satisfy $A_k(\om)V(\om)=V(\sigma_k\om)$
and $A_k(\om)U(\om)=U(\sigma_k\om)$.
The function $g(\om):=d(U(\om)\cap S,V(\om)\cap S)$
is measurable and almost everywhere positive. Hence there exists an $\eta>0$ such that
$\PP_k(g(\om)>\eta)\ge \frac 23$. We let $G=\{\om\in\Omega_k\colon g(\om)>\eta\}$
and say this is the \emph{good set}. Given $\omega\in \Omega_k$, the $n$'s such that $\sigma_k^n(\om)\in G$
are called the \emph{good times}. 

We modify the cocycle only on the good set. Also our modifications change only what is happening on $V(\om)$, 
leaving all of the cocycle alone on $U(\om)$. That is, we are building an extension $\Omega_{k+1}$
of $\Omega_k$ with factor map $\pi_{k+1}$, and a cocycle on $\Omega_{k+1}$ such that 
such that $A'(\om')|_{U(\om)}=A(\om)|_{U(\om)}$, where $\omega=\pi_{k+1}(\omega')$.
In fact, we will also require $A'(\om')(V(\om))=V(\sigma\om)$
so that the perturbed cocycle preserves the equivariant direct sum $\RR^d=U(\om)\oplus V(\om)$. 
It follows from the Multiplicative Ergodic Theorem that the multiset  that is the 
Lyapunov spectrum (with repetition) of the cocycle $A'$ 
is the union of the Lyapunov spectra of the restriction of $A'$ to $U(\om)$ and the restriction of $A'$ to $V(\om)$. 
Also, the Lyapunov spectrum of the restriction of $A'$ to $U(\om)$ agrees with the Lyapunov spectrum of the restriction of $A$
to $U(\om)$. 

Notice that $A'(\om')-A(\om)=(A'(\om')-A(\om))\circ \Pi_{V(\om)\parallel U(\om)}$, so that
$\|A'(\om')-A(\om)\|\le \|(A'(\om')-A(\om))|_{V(\om)}\|\,\|\Pi_{V(\om)\parallel U(\om)}\|$. 
By Lemma \ref{lem:projbound}, at good times $\|\Pi_{V(\om)\parallel U(\om)}\|<\frac 2\eta$. 
To ensure that $\|A'(\om')-A(\om)\|\le \frac{\ep}{4d}$, it suffices to ensure that 
$\|(A'(\om)-A(\om))|_{V(\om)}\|\le \frac{\eta\ep}{8d}$. 

We also want to ensure that the Lyapunov exponents of the restriction of the perturbed cocycle to $V(\om)$ remain 
strictly bigger than $\lambda_{j+1}$ and strictly smaller than $\lambda_{j-1}$ (where we take $\lambda_0$ to be
$\infty$ and $\lambda_{l_k+1}$ to be $-\infty$). 
To this end, we use Lemma \ref{lem:semi-cont} to obtain a $\delta$ such that 
if we build an extension $\Omega_{k+1}$ and a cocycle such that 
$|(B_{\bar\om}-A_{\pi(\bar\om)})|_{V(\pi(\bar\om))}|_\infty\le\delta$ for all $\bar\om\in\Omega_{k+1}$,
then the Lyapunov
exponents of the restriction of $B$ to $V(\pi(\bar\om))$ are in the range $(\lambda_{j+1},\lambda_{j-1})$.
Hence this paragraph and the previous paragraph taken together allow us to choose a $\kappa$
so that if the restriction of the extension cocycle to $V(\om)$ differs at good times by at most $\kappa$
and the extension cocycle restricted to $U(\om)$ is unchanged, then the exponents of the restriction of the cocycle
to $V(\om)$ are in the range $(\lambda_{j+1},\lambda_{j-1})$ and $\|A'(\om')-A(\om)\|\le\frac{\ep}{4d}$.



We use a very similar strategy to that in Section \ref{S:2dtarget} where we found a target for $s_1/s_d$.
Let $\gap_k$ be as computed in the start of Theorem \ref{thm:meta} with $\ep$ replaced by $\kappa$
and $d$ replaced by $m_{k,j}$.
Here we aim to create a gap between the fastest and slowest vectors in a block within $V(\om)$.
In a block, either there are two orthogonal vectors whose angle of separation at some stage
within the block is less than $1/\gap_k$; otherwise one may incrementally boost one direction. 



Now let $N>\frac{32}{\kappa}\gap_k\log\gap_k$. We apply Rokhlin's lemma to build a Rokhlin tower with base
$H$ of positive $\mathbb{P}_k$-measure contained in the good set $G$
such that the return time to the base always exceeds $N$. Let $r_H$ denote the return time to $H$ under $\sigma_k$.
An element $\om\in H$ 
of the base is said to be \emph{good} if $\sum_{i=0}^{r_H(\om)-1}\mathbf 1_G(\sigma_k^i\om)>\frac {r_H(\om)}2$
and let $H'\subset H$ denote the set of good elements of the base. Since $\rho_k(G)\ge \frac 23$,
$H'$ has positive $\rho_k$-measure. For $\om\in H'$, let $B^i(\om)=A_k(\sigma_k^i\om)\cdots A_k(\sigma_k\om)$.
If there exists a good time $1< i\le r_H(\om)$ such that there exist two orthogonal vectors $v$ and $v'$ in $V(\sigma_k(\om))$
for which $\angle(B^{i-1}(\om) v,B^{i-1}(\om) v')<\frac 1{\gap_k}$, 
then $s_1(B^{i-1}(\om)|_{V(\sigma_k(\om))})/
s_{m_{k,j}}(B^{i-1}(\om)|_{V(\sigma_k(\om))})>\gap_k$ by Lemma \ref{lem:closed2} 
In that case, the block $B^{i-1}(\om)$ is taken to be the target block. 

Otherwise, for $\omega\in H'$, let $v_1,\ldots,v_{m_{k,j}}$ be a (measurably-chosen) family of singular vectors
for the restriction of $B^{r_H(\om)-1}(\om)$ to $V(\sigma_k(\om))$
with singular values $s_1\ge\ldots\ge s_{m_{k,j}}$.
As in Lemma \ref{lem:far}, we modify the cocycle (but only at the good times), each time expanding the image of $v_1$ by
a factor of $1+\frac\kappa{8\gap_k}$, leaving the other
$v$ images (as well as $U(\sigma_k^i(\om))$) unchanged. 
More explicitly, for $\omega\in H'$ and $n<r_H(\omega)$, we let $P(\sigma_k^n\omega)$ be the projection onto $\lin(B^n(\omega)v_1)$
parallel to $\lin\{B^n(\omega)v_j\colon j>1\}\oplus U(\sigma_k^n(\omega))$; otherwise set $P(\sigma_k^n\omega)=0$.
The perturbation is then defined by
$$
A'(\sigma_k^n(\om))=A(\sigma_k^n(\om))\left(I+\frac \kappa{8\gap_k}\mathbf 1_G(\sigma_k^n(\om))P(\sigma_k^n\omega)\right).
$$

We write ${A'}(\sigma_k(\om)),\ldots,A'(\sigma_k^{r_H(\om)-1}(\om))$ 
for the matrices in the perturbed block and ${B'}^{r_H(\om)-1}(\om)$ for the product
$A'(\sigma_k^{r_H(\om)-1}(\om))\cdots {A'}(\sigma_k(\om))$.
By the end of the block, as in the lemma, since the number of good times exceeds $N/2$,
we have $s_1({B'}^{r_H(\om)-1}(\om))/s_{m_{k,j}}({B'}^{r_H(\om)-1}(\om))>\gap_k$. 
We call this a Type II target.

The coordinates immediately preceding and following a target block are called transition coordinates as in the proof of Theorem
\ref{thm:meta}. 
We then define an extension system $\Omega_{k+1}=\Omega_k\times S_\infty^{\ZZ}$ 
equipped with the measure $\rho_k\times\PP$ where $\PP$ is the i.i.d.\ measure on matrices with
independent uniform $[-1,1]$ elements. We define the cocycle $A_{k+1}$ by
$$
A_{k+1}(\omega,\Xi)=\begin{cases}
    A'(\omega)&\text{if $\omega$ lies inside a Type II target block;}\\
    A(\omega)+\kappa\Xi_0&\text{if $\omega$ is a transition coordinate;}\\
    A(\omega)&\text{otherwise.}
\end{cases}
$$
A calculation exactly analogous to the calculation in Theorem \ref{thm:meta} shows that the restriction of the
$A_{k+1}$ cocycle to the equivariant $V(\om)$ block has non-trivial Lyapunov spectrum. By the choice of 
$\kappa$, all Lyapunov 
exponents lie in $(\lambda_{j-1},\lambda_{j+1})$,  completing this step of the induction. 

\textsl{Step 2: Completion of the proof}\\
At the end of the induction, we have an ergodic invertible extension $\bar\Omega:=\Omega_K$ of 
$\Omega$ and a measurable cocycle $\bar A:=A_K$
with simple Lyapunov spectrum $\lambda_1>\ldots>\lambda_d$. 
Let $\bar\pi=\pi_1\circ\ldots\circ\pi_K$ denote the factor map from 
$\bar\Omega$ to $\Omega$. We have $\|A_0(\bar\pi(\bar\om))-A(\bar\pi(\bar\om))\|<\frac\ep 2$ for each $\bar\om\in\bar\Omega$.
Similarly, by definition of the perturbations, we have
$\|A_k(\pi_{k+1}\circ\ldots\circ\pi_K(\bar\om))-A_{k-1}(\pi_k\circ\ldots\circ\pi_K(\bar\om))\|\le \frac{\ep}{4d}$ for each $k$
and each $\bar\om\in\bar\Omega$. 
Summing these, we obtain for each $\bar\om\in\bar\Omega$,
\begin{align*}
\|\bar A(\bar\om)-A(\bar\pi(\bar\om))\|&\le
\|\bar A(\bar\om)-A_0(\bar\pi(\bar\om))\|+\|A_0(\bar\pi(\bar\om))-A(\bar\pi(\bar\om))\|
<\tfrac{3\ep}4.
\end{align*}
Let $\gap$ be the required gap identified in the beginning of the proof of Theorem \ref{thm:meta}
for perturbations of size $\ep$ and dimension $d$. 
From Raghunathan's proof of Oseledets' theorem \cite{Raghunathan}, we know that 
$\lim_{n\to\infty}\frac 1n\log s_j(\bar A^{(n)}(\bar\om))\to\lambda_j$ for each $1\le j\le d$ and for a.e.\ $\bar\om\in\bar\Omega$. 
Since the Lyapunov spectrum is simple, it follows that $s_j(\bar A^{(n)}(\bar\om))/s_{j+1}(\bar A^{(n)}(\bar\om))\to\infty$
for a.e.\ $\bar\om\in\bar\Omega$.
Hence, there exists an $N$ such that the probability that 
$s_j(\bar A^{(N-1)}(\bar\om))/s_{j+1}(\bar A^{(N-1)}(\bar\om))$ exceeds $\gap$
is at least $\frac 12$. Let $\bar G\subset \bar \Omega$ be this set. 

Since $\bar\rho$ is an ergodic invariant measure, $\bar\rho$-a.e.\ $\bar\om$ hits $\bar G$ 
with frequency equal to $\bar\rho(\bar G)\ge \frac 12$. 
Since $\bar\rho$ may not be ergodic with respect to $\sigma^{N}$, we are unable to conclude 
that for a.e.\ $\bar\om\in\bar\Omega$, 
$\lim_{M\to\infty}\#\{n\le M\colon \bar\sigma^{nN}\bar\om\in\bar G\}/M\ge\rho(\bar G)$,
however this inequality holds on a set of measure at least $\frac 1N$. Call this set $\bar H$. 

Let $\bar\omega\in\bar H$ and consider a realization $(X_n)_{n\in\ZZ}$ of the cocycle $A_\ep$
where $X_n=A_{\sigma^n(\bar\pi(\bar\omega))}+ \ep\Xi_n$.
We say that the realization hits the target area on the $\ell$th
block if the following two conditions are satisfied:
\begin{enumerate}
    \item 
$s_j(\bar A^{(N-1)}(\bar\sigma^{\ell N}\bar\om))/s_{j+1}(\bar A^{(N-1)}(\bar\sigma^{\ell N}\bar\om))>\gap$; and 
\item 
$|X_n-\bar A_{\bar\sigma^n(\bar\omega)}|_\infty\le (\frac\ep4)^N/(3dN)$ for $n=\ell N,\ldots,\ell N+(N-1)$.
\end{enumerate}
By definition of $\bar H$, the first condition is satisfied with frequency at least $\frac 12$.
Given that the first condition is satisfied, the block $(A_\ep(\sigma^n\om)+\ep\Xi_n)_{n=\ell N}^{\ell N+(N-2)}$
hits the target area with probability $[(\frac\ep 4)^N/(3dN)]^{d^2(N-1)}$ (independently of whether 
other targets are hit). 
Following the proof of Theorem \ref{thm:meta}, we see that the $j$th exponent of the $(A_{n,\ep}(\om))$ cocycle is
strictly larger than the $(j+1)$st. 
\end{proof}

\section{Proofs of Main Theorems}
\begin{proof}[Proof of Theorem \ref{mt:dxd}]
Lemma \ref{lem:tard} shows that the hypothesis of Theorem \ref{thm:meta} is satisfied. 
From the proof of Theorem \ref{thm:meta}, $\gap=C/\ep^{8d}$. From Lemma \ref{lem:tard},
the block length is given by $N=(16/\ep)\gap\log\gap$, so that $N=\ep^{-(8d+1+o(1))}$. 
The lower bound is then given by the quantity $\frac pN$, where $p$ appears in Corollary \ref{cor pos measure}.
That is, $c_d'(\ep)=(\frac\ep 4)^{d^2N^2}/[N (3dN)^{d^2N}]$, so that
$c_d'(\ep)=\exp\big(-4\ep^{-(16d+2+o(1))}|\log\frac\ep 4|-4\ep^{-(8d+1+o(1))}\log (3dN)-\log N\big)$.
Hence $c_d'(\ep)=\exp(-\ep^{-(16d+2+o(1)}))$ as claimed.
\end{proof}

Note that in the case $d=2$, $s_1(A)/s_d(A)=s_1(A)/s_2(A)$. Hence $c_2(\ep)=c_2'(\ep)$, so that
Theorem \ref{mt:2x2} is a special case of Theorem \ref{mt:dxd}.



\begin{proof}[Proof of Theorem \ref{mt:3x3}]
Lemma \ref{lem:tar3} in the case $j=1$ or Corollary \ref{cor:tar3} in the case $j=2$
shows that the hypothesis of Theorem \ref{thm:meta} is satisfied.
From the proof of Theorem \ref{thm:meta}, we take $\gap=C/\ep^{8d}=C/\ep^{24}$. From Lemma \ref{lem:tar3}
or Corollary \ref{cor:tar3},
for small $\ep$, $\eta=\gap^{-9}=C\ep^{216}$, so that $N=\ep^{-433+o(1)}$, giving 
$c_3(\ep)=\exp(-1/\ep^{866+o(1)})$ as claimed.
\end{proof}



\begin{proof}[Proof of Theorem \ref{mt:ort+unif}]
Let $1\le j<d$. Let $\gap=C/\ep^{8d}$ as required in Theorem \ref{thm:meta}.
Since orthogonal matrices preserve orthogonal frames, any block is $\sqrt 2$-spread. 
Letting $N$ be as in the statement of Lemma \ref{lem:far}, we see $N=\ep^{-(1+o(1))}$.
Substituting in the expression for $\frac pN$ in Theorem \ref{thm:meta}, we see 
that $c(\ep)=\exp(-1/\ep^{2+o(1)})$ as required.
\end{proof}

\begin{proof}[Proof of Theorem \ref{mt:dxd2}]
Let $\ep>0$. Let $\gap=C/\ep^{8d}$ be as constructed at the start of the proof of Theorem \ref{thm:meta}.
Suppose for a contradiction that there does not exist a $c_d(\ep)>0$ such that for all sequences 
of matrices $(A_n)$ of norm at most 1 and all $1\le j\le d-1$, 
one has $\liminf_{n\to\infty} \frac 1n \log[s_j(A_\ep^{(n)})/s_{j+1}(A_\ep^{(n)})]\ge c_d(\ep)$. 
That is, there exist $1\le j\le d-1$ and sequences of matrices $(A_n)$ such that
$\liminf_{n\to\infty} \frac 1n \log[s_j(A_\ep^{(n)})/s_{j+1}(A_\ep^{(n)})]$ is arbitrarily close to 0.
Then the hypothesis of Lemma \ref{lem:compact} is satisfied. So one concludes that
there exists an ergodic invariant measure on $S^\ZZ$ for which $\lambda_j^{\ep/(4d)}=
\lambda_{j+1}^{\ep/(4d)}$. However, this contradicts the 
conclusion of Theorem \ref{thm:simp}, so that there must exist a positive universal gap $c_d(\ep)>0$.
\end{proof}

\bibliographystyle{abbrv}
\bibliography{AFGTQ}

\end{document}